\newtheorem*{corollary*}{Corollary}
\newtheorem{theorem}{Theorem}[section]
\newtheorem{corollary}[theorem]{Corollary}
\newtheorem{proposition}[theorem]{Proposition}
\newtheorem{lemma}[theorem]{Lemma}
\theoremstyle{definition}
\newtheorem*{remark}{Remark}
\newtheorem*{remarks}{Remarks}
\theoremstyle{remark}
\numberwithin{equation}{section}
\newcommand{\cA}{\mathcal{A}}
\newcommand{\cB}{\mathcal{B}}
\newcommand{\kD}{\mathfrak{D}}
\renewcommand{\epsilon}{\varepsilon}
\newcommand{\cE}{\mathcal{E}}
\newcommand{\cJ}{\mathcal{J}}
\newcommand{\cM}{\mathcal{M}}
\newcommand{\Z}{\mathbb{Z}}
\title{Primes represented by positive definite binary quadratic forms}
\author{Asif Zaman}
\address{
Asif Zaman \\
Department of Mathematics \\ 
Stanford University   \\ 
450 Serra Mall, Building 380 \\
Stanford, CA \\
USA \\
94305}
\email{aazaman@stanford.edu}
\date{October 24, 2017}
\begin{document}

\begin{abstract}
Let $f$ be a primitive positive definite integral binary quadratic form of discriminant $-D$ and let $\pi_f(x)$ be the number of primes up to $x$ which are represented by $f$. We prove several types of upper bounds for $\pi_f(x)$ within a constant factor of its asymptotic size: unconditional, conditional on the Generalized Riemann Hypothesis (GRH), and  for almost all discriminants. The key feature of these estimates is that they hold whenever $x$ exceeds a small power of $D$ and, in some cases, this range of $x$ is essentially best possible. In particular, if $f$ is reduced then this optimal range of $x$ is achieved for almost all discriminants or by assuming GRH. We also exhibit an upper bound for the number of primes represented by $f$ in a short interval and  a lower bound for the number of small integers represented by $f$ which have few prime factors.
\end{abstract}

\maketitle
\tableofcontents

%
%
\section{Introduction}

%
%
%
\subsection{Historical review}
The distribution of primes represented by positive definite integral binary quadratic forms is a classical topic within number theory  and has been intensely studied over centuries by many renowned mathematicians, including Fermat, Euler, Lagrange, Legendre, Gauss, Dirichlet, and Weber. A beautiful exposition on the subject and its history can be found in \cite{Cox-2013}. 

We shall refer to positive definite integral binary quadratic forms as simply `forms'. Let
\[
f(u,v) = au^2 + buv + cv^2
\]
be a form of discriminant $-D = b^2 - 4ac$. An integer $n$ is said to be represented by $f$ if there exists $(u,v) \in \Z^2$ such that $n = f(u,v)$. A form is primitive if $a,b,$ and $c$ are relatively prime. The group $SL_2(\Z)$ naturally acts on the set of primitive forms with discriminant $-D$ and two forms are said to be properly equivalent if they belong to the same $SL_2(\Z)$ orbit. A primitive form is reduced if $|b| \leq a \leq c$ and $b \geq 0$ if either $|b| = a$ or $a=c$. Every primitive form  is properly equivalent to a unique reduced form. Most amazingly, the set of primitive forms with discriminant $-D$ modulo proper equivalence can be given a composition law that makes it a finite abelian group $\mathrm{Cl}(-D)$. This is the class group of $-D$ and its size $h(-D)$ is the class number. We refer to each equivalence class of the class group as a form class. 
	
Now, assume $f$ is primitive. The central object of study is the number of primes represented by $f$ up to $x$, denoted
\[
\pi_f(x) = |\{ p \leq x : p = f(u,v) \text{ for some $(u,v) \in \Z^2$ } \}|
\]
for $x \geq 2$. From deep connections established by class field theory, the Chebotarev density theorem \cite{Tschebotareff-1926, LagariasOdlyzko-1977} implies that primes are equidistributed amongst all form classes. Namely, 
\begin{equation}
\pi_f(x) \sim \frac{\delta_f x}{h(-D) \log x}
\label{eqn:PNT_bqfs}
\end{equation}
as $x \rightarrow \infty$, where
\begin{equation}
	\delta_f = 
		\begin{cases}
 			1 & \text{if $f(u,v)$ is properly equivalent to its opposite $f(u,-v)$,}\\
 			\frac{1}{2} & \text{otherwise.}			
		\end{cases}
\end{equation}
Unfortunately, the asymptotic \eqref{eqn:PNT_bqfs} derived from \cite{LagariasOdlyzko-1977} requires $x$ to be exponentially larger than $D$ even without the existence of a Siegel zero. This is unsuitable for many applications. 
Assuming the Generalized Riemann Hypothesis (GRH), one can do much better. It follows from the same work of Lagarias and Odlyzko \cite{LagariasOdlyzko-1977} that, assuming GRH, 
\begin{equation}
	\pi_f(x) = \frac{\delta_f \mathrm{Li}(x)}{h(-D)} + O( x^{1/2} \log(Dx) )
	\label{eqn:PNT_bqfs_GRH}
\end{equation}
for $x \geq 2$. Here $\mathrm{Li}(x) = \int_2^x \frac{1}{\log t}dt \sim \frac{x}{\log x}$. Equation \eqref{eqn:PNT_bqfs_GRH} is nontrivial for $x \geq D^{1+\epsilon}$ and any $\epsilon > 0$. This GRH range for $x$ is essentially optimal for \emph{certain but not necessarily all} forms. Amongst other results, we will partially address this defect in this paper. For further discussion of range optimality and improvements (both conditional and unconditional), see \cref{subsec:optimality,subsec:results}.

Now, to remedy our lack of knowledge about the asymptotics of $\pi_f(x)$ in the unconditional case, one can settle for upper and lower bounds of the shape
\begin{equation}
\frac{x}{h(-D) \log x} \ll \pi_f(x) \ll \frac{x}{h(-D) \log x}
\label{eqn:Hoheisel}
\end{equation}
 in the hopes of improving the valid range of $x$. For uniform lower bounds, we refer the reader to \cite{Fogels-1961,Weiss-1983,KowalskiMichel-2002} and, most recently, \cite{ThornerZaman-2017} wherein it was shown that there exists a prime $p$ represented by $f$ of size at most $O(D^{700})$. 

The focus of this paper, however, is on upper bounds for $\pi_f(x)$. Assuming GRH, there has been no improvement beyond \eqref{eqn:PNT_bqfs_GRH} itself. Unconditionally, a general result of Lagarias, Montgomery and Odlyzko \cite[Theorem 1.4]{LagariasMontgomeryOdlyzko-1979} on the Chebotarev density theorem  yields some progress  but the range of $x$ is still worse than exponential in $D$. Recently, a theorem of Thorner and Zaman \cite{ThornerZaman-2017a} improves upon the aforementioned Chebotarev result and consequently implies that 
\begin{equation}
	\pi_f(x) < \frac{2 \delta_f \mathrm{Li}(x)}{h(-D)} \qquad \text{for $x \geq D^{700}$}
	\label{eqn:BT_bqfs_TZ}
\end{equation}
and $D$ sufficiently large.  Short of excluding a Siegel zero, the constant $2$ is best possible and, moreover, the range of $x$ is polynomial in $D$ similar to \eqref{eqn:PNT_bqfs_GRH}. On the other hand, the quality of exponent $700$ in \eqref{eqn:BT_bqfs_TZ} leaves much to be desired when compared to the GRH exponent of $1+\epsilon$. Both \cite{LagariasMontgomeryOdlyzko-1979,ThornerZaman-2017a} carefully study the zeros of Hecke $L$-functions to prove their respective results  whereas a more recent paper of Debaene \cite{Debaene-2016} uses a lattice point counting argument and Selberg's sieve to subsequently establish another such Chebotarev-type theorem. His result implies a weaker variant of inequality \eqref{eqn:BT_bqfs_TZ} but for a greatly improved range of $x \geq D^{9/2+\epsilon}$. Broadly speaking, we will specialize Debaene's strategy to positive definite binary quadratic forms. 

An alternate formulation that expands the valid range of upper (and lower) bounds for $\pi_f(x)$ can be obtained by averaging over discriminants or forms, in analogy with the famous theorems of Bombieri--Vinogradov and Barban--Davenport--Halberstam on primes in arithmetic progressions. As part of his Ph.D. thesis, Ditchen \cite{Ditchen-2013,Ditchen-2013a} achieved such elegant statements which we present in rough terms for simplicity's sake. Namely, for 100\% of fundamental discriminants $-D \not\equiv 0 \pmod{8}$ and all of their forms $f$, he proved
\begin{equation}
\pi_f(x) = \frac{\delta_f x}{h(-D) \log x} + O_{\epsilon}\Big(\frac{x}{h(-D) (\log x)^2} \Big) 
\label{eqn:Ditchen} 
\end{equation}
provided $x \geq D^{20/3+\epsilon}$. He also showed for 100\% of fundamental discriminants $-D \not\equiv 0 \pmod{8}$ and 100\% of their forms, equation \eqref{eqn:Ditchen} holds for $x \geq D^{3+\epsilon}$. 

Finally, before we discuss an optimal range for upper bounds of $\pi_f(x)$ and the details of our results, we give a somewhat imprecise flavour of what we have shown. One should compare with \eqref{eqn:PNT_bqfs_GRH}, \eqref{eqn:BT_bqfs_TZ}, \eqref{eqn:Ditchen}, and their associated papers \cite{LagariasOdlyzko-1977,ThornerZaman-2017a,Debaene-2016,Ditchen-2013}.

\begin{corollary*}
	Let $f(u,v) = au^2 + buv + cv^2$ be a reduced positive definite integral binary quadratic form of discriminant $-D$. If GRH holds and $\epsilon > 0$ then
	\[
	\pi_f(x) \ll_{\epsilon} \frac{x}{h(-D) \log x} \qquad \text{for $x \geq (D/a)^{1+\epsilon}$}. 
	\]
	This estimate holds unconditionally for 100\% of discriminants $-D$. Unconditionally and uniformly over all discriminants $-D$, the same upper bound for $\pi_f(x)$ holds for $x \geq (D^2/a)^{1+\epsilon}$. 
\end{corollary*}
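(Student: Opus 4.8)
The plan is to bound $\pi_f(x)$ by a sieve applied to the values of $f$ on lattice points, and then to extract the three ranges of validity from two essentially separate inputs: a geometric lattice-point count, which accounts for the parameter $a$, and an analytic estimate for partial sums of the quadratic character $\chi_{-D}$, which accounts for the difference between the GRH, almost-all, and unconditional ranges. First I would reduce to a sieve problem. Writing $f(u,v)=a(u+\tfrac{b}{2a}v)^2+\tfrac{D}{4a}v^2$, the region $E_x=\{(u,v)\in\mathbb R^2:f(u,v)\le x\}$ is an ellipse of area $\tfrac{2\pi x}{\sqrt D}$ whose longer semi-axis has length $\asymp\sqrt{x/a}$ and in which $v$ ranges over $\asymp\sqrt{ax/D}$ integers (here we use that $f$ is reduced, so $|b|\le a\le c$). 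Since every prime $p\nmid D$ has a bounded number of representations by $f$, and at least the two coming from $(u,v)\mapsto(-u,-v)$, one has $\pi_f(x)\le\tfrac12\,\#\{(u,v)\in E_x: f(u,v)\text{ has no prime factor}\le z\}$ up to the $O(\log D)$ primes dividing $D$ and a choice $z\le\sqrt x$.

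I would then apply Selberg's sieve to the sequence $\cA=\{f(u,v):(u,v)\in E_x\}$. The local densities are governed by $\rho(\ell)=\#\{(u,v)\bmod\ell:\ell\mid f(u,v)\}$, which equals $1+\chi_{-D}(\ell)$ to leading order, so the main term is $\tfrac{X}{G(z)}$ with $X=\#(E_x\cap\mathbb Z^2)$ and $G(z)=\sum_{d\le z}\prod_{p\mid d}\tfrac{g(p)}{1-g(p)}$, $g(\ell)=\rho(\ell)/\ell^2$. The geometric input is twofold. On one hand, $X=\tfrac{2\pi x}{\sqrt D}+O(\sqrt{x/a})$, so $X\asymp x/\sqrt D$ provided $x\gg D/a$; this is where the optimal scale $D/a$ and the dependence on $a$ first appear. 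On the other hand, the error terms $R_d$ in the congruence counts $\#\{(u,v)\in E_x:d\mid f(u,v)\}=g(d)X+R_d$ must be controlled on average over $d$. Counting along the short direction $v$ and estimating the resulting quadratic congruence in $u$ on each fibre by a Debaene-type lattice-point lemma, one obtains an admissible level of distribution that is a fixed power of $x/(D/a)$; this is the step that fixes how large $z$ may be taken, and it is the main technical obstacle, since the estimate must be completely uniform in $D$ and in the shape of $f$.

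It remains to evaluate the main term, and this is where GRH enters. By the class number formula $L(1,\chi_{-D})=\tfrac{\pi\,h(-D)}{\sqrt D}$ one has $G(z)\sim\mathfrak S\,L(1,\chi_{-D})\log z$, so $\tfrac{X}{G(z)}\ll\tfrac{x}{h(-D)\log x}$ provided $\log z\asymp\log x$ and, crucially, provided the truncated sum genuinely attains this size, i.e. $\sum_{e\le z}\tfrac{\chi_{-D}(e)}{e}\gg L(1,\chi_{-D})$. The tail $\sum_{e>z}\chi_{-D}(e)/e$ is controlled by partial sums of $\chi_{-D}$: under GRH these are $\ll z^{1/2}D^{o(1)}$, so $z\ge D^{\epsilon}$ suffices and the binding constraint is the geometric one, giving $x\ge(D/a)^{1+\epsilon}$. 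Unconditionally one has only P\'olya--Vinogradov, $\ll\sqrt D\log D$, which together with Siegel's lower bound $L(1,\chi_{-D})\gg_\epsilon D^{-\epsilon}$ forces $z\ge D^{1/2+\epsilon}$ and hence the larger range $x\ge(D^2/a)^{1+\epsilon}$ (with an ineffective constant). Finally, for $100\%$ of discriminants the required character-sum bound holds with GRH quality by the large sieve and almost-all zero-density estimates for the family $\{\chi_{-D}\}$, recovering the range $x\ge(D/a)^{1+\epsilon}$ without GRH. The hardest part throughout is the uniform control of the lattice-point error terms $R_d$, on which the entire level of distribution, and therefore the admissible range of $x$, depends.
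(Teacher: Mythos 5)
Your outline coincides with the paper's: count lattice points in the ellipse fibre-by-fibre, feed the congruence counts into Selberg's sieve with local densities $g(p)\approx(1+\chi_{-D}(p))/p$, and let character-sum information on $\chi_{-D}$ dictate the range of $x$. However, your unconditional uniform claim has a concrete quantitative gap. Your own setup forces the sifting parameter to satisfy $z\asymp(ax/D)^{1/4}$ up to logarithms (the fibre errors are $\approx d^{1/2}(\sqrt{D}/a)V^{1/2}$ with $V=\sqrt{4ax/D}$, so the level of distribution $z^2$ cannot exceed $\approx V$), and with P\'olya--Vinogradov your requirement $z\ge D^{1/2+\epsilon}$ translates into $ax/D\ge D^{2+4\epsilon}$, i.e.\ $x\ge(D^3/a)^{1+\epsilon}$ --- \emph{not} the stated $(D^2/a)^{1+\epsilon}$. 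The paper reaches $D^2/a$ precisely because it replaces P\'olya--Vinogradov by Burgess's inequality (\cref{lemma:Burgess}), which makes character sums nontrivial already at length $D^{1/4+\epsilon}$, so that $z\ge D^{1/4+\epsilon}$ suffices. Moreover, your use of Siegel's theorem renders the constant ineffective, contrary to the paper's conventions; the paper avoids Siegel entirely by disposing of the case $L(1,\chi_{-D})<(\log y)^{-2}$ trivially, since by the class number formula a small $L(1,\chi_{-D})$ means a small $h(-D)$, and then the bare lattice-point count $\sum_{n\le x}r_f(n)\ll x/\sqrt{D}$ already beats the target bound (see the first part of the proof of \cref{prop:selberg_sieve_prelim}).

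A second, independent gap is in your main-term criterion. The Selberg main term is bounded below by $\sum_{\ell<z}g(\ell)$, and (as in \cref{lemma:sum_of_local_densities}) this equals $L(1,\chi_{-D})\log z+L'(1,\chi_{-D})+(\text{errors})$; so besides your condition $\sum_{e\le z}\chi_{-D}(e)/e\gg L(1,\chi_{-D})$ you must show that $-L'(1,\chi_{-D})$ is small compared with $L(1,\chi_{-D})\log z$, i.e.\ you need an upper bound for $-\tfrac{L'}{L}(1,\chi_{-D})$ by a small multiple of $\log D$. Siegel's theorem does not give this (it yields only $O_\epsilon(D^{\epsilon})$), and the cheap explicit-formula bound $-\tfrac{L'}{L}(1,\chi_{-D})\le\tfrac12\log D+O(1)$ would again force $z>D^{1/2}$, i.e.\ the weaker range $(D^3/a)^{1+\epsilon}$. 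This is why the paper invokes Heath-Brown's bound $-\tfrac{L'}{L}(1,\chi_{-D})\le(\tfrac18+\eta)\log D$ unconditionally (\cref{lemma:log_derivative_unconditional}), the $O(\log\log D)$ bound under GRH, and Jutila's zero-density estimate for the almost-all statement (\cref{lemma:log_derivative_average}); your proposal never engages with this term, and without such an input the claimed ranges do not follow even after fixing the character-sum issue above. Your GRH case and the broad strategy for the almost-all case (quadratic large sieve plus zero-density) are otherwise in line with the paper.
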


%
%
%
\subsection{Optimal range}
\label{subsec:optimality} What is the minimal size of $x$ relative to $D$ for which one can reasonably expect \eqref{eqn:Hoheisel} to hold for all forms $f$? We have seen GRH implies $x \geq D^{1+\epsilon}$ is valid and, in one sense, this range is best possible. For example, the form 
\[
f(u,v) = u^2 + \frac{D}{4}v^2
\]
with $D \equiv 0 \pmod{4}$ does not represent any prime $< D/4$. The reason is simple: the coefficients are simply too large. On the other hand, what about forms 
\[
f(u,v) = au^2 + buv + cv^2
\] 
for which all of the coefficients are small compared to $D$, say $a,b,c \ll \sqrt{D}$? The \emph{GRH range} $x \geq D^{1+\epsilon}$ is insensitive to the size of the form's coefficients. Since primitive forms are properly equivalent to a reduced form and all forms in the same form class represent the same primes, it is enough to consider a reduced form $f$ so the coefficients necessarily satisfy $|b| \leq a \leq c$. Consider the sum
\[
\sum_{n \leq x} |\{ (u,v) \in \Z^2 : n = f(u,v) \}|.
\]
If $x < c$ then, as $f$ is reduced, the only terms $n$ contributing to the above sum are of the form $n = f(u,0) = au^2$. The sum is therefore  equal to $2\sqrt{x/a} + O(1)$ and at most one prime value $n$ contributes to its size. On the other hand, if $x \geq c$ then the above sum is of size $O(x/\sqrt{D})$. One might therefore reasonably suspect that primes will appear with natural frequency for $x \geq c^{1+\epsilon}$ and any $\epsilon > 0$. As $c \asymp D/a$, it is conceivable for reduced forms $f$ to satisfy \eqref{eqn:PNT_bqfs_GRH} in (what we will refer to as) the \emph{optimal\footnotemark \,range} $x \geq (D/a)^{1+\epsilon}$ for any $\epsilon > 0$.  \footnotetext{Of course, one could potentially refine the factor of $\epsilon$ but that is not our goal.}

Can we expect to obtain any kind of unconditional or GRH-conditional bounds for $\pi_f(x)$ in the optimal range? For the sake of comparison, we turn to primes in arithmetic progressions since estimates like \eqref{eqn:BT_bqfs_TZ} are relatives of the classical Brun--Titchmarsh inequality. A version due to Montgomery and Vaughan \cite{MontgomeryVaughan-1973} states for $(a,q) = 1$ and $x > q$ that
\begin{equation}
\pi(x;q,a) < \frac{2x}{\varphi(q) \log (x/q)}. 
\label{eqn:BT_APs}
\end{equation}
Here $\pi(x;q,a)$ represents the number of primes up to $x$ congruent to $a \pmod{q}$. The range $x > q$ is clearly best possible which inspires the possibility of success for a similar approach to primes represented binary quadratic forms. Our goal is to give upper bounds for $\pi_f(x)$ as close to the optimal range as possible. 

%
%
%
\subsection{Results}
\label{subsec:results}

Our first result is a uniform upper bound for $\pi_f(x)$, both unconditional and conditional on GRH. For a discriminant $-D$,  let $\chi_{-D}( \, \cdot \,) = \big(\frac{-D}{\, \cdot \,}\big)$ denote the corresponding Kronecker symbol which is a quadratic Dirichlet character.

%
%
%
%
\begin{theorem}
	\label{theorem:BT_uniform_simplified}
	Let $f(u,v) = au^2 + buv + cv^2$ be a reduced positive definite integral binary quadratic form with discriminant $-D$ and let $\epsilon > 0$ be arbitrary. Set
	\begin{equation}
		\phi = \phi(\chi_{-D}) := \begin{cases}
			0 & \text{assuming $L(s,\chi_{-D})$ satisfies GRH,} \\ 
			\frac{1}{4} & \text{unconditionally}. 
			\end{cases}
		\label{def:phi}
	\end{equation}
	If $x \geq (D^{1+4\phi}/a)^{1+\epsilon}$ then
	\begin{equation}
 	\pi_f(x) <  \frac{4}{1-\theta} \cdot \frac{\delta_f x}{h(-D) \log x} \Big\{ 1 + O_{\epsilon}\big(\frac{\log\log x}{\log x} \big)  \Big\}, 
 	\label{eqn:BT_uniform_simplifed}
	\end{equation}
	where
	\[
	\theta = \theta_{\phi} = \big(1+2\phi + \frac{\epsilon}{2} \big) \frac{\log D}{\log x} - \frac{\log a}{\log x}. 
	\]

\end{theorem}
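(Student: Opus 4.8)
The plan is to adapt Debaene's lattice-point-and-sieve method to the form $f$. Since all forms in a class represent the same primes and $f$ is reduced, I would work directly with the ellipse $E_x=\{(u,v)\in\Z^2:0<f(u,v)\le x\}$. Let $\cA$ be the multiset $\{f(u,v):(u,v)\in E_x\}$, so that $|\cA|$ is the number of lattice points in $E_x$, with main term equal to the area $\frac{2\pi x}{\sqrt D}$. First I would relate $\pi_f(x)$ to the sifted quantity $S(\cA,z)$ counting those $(u,v)\in E_x$ for which $f(u,v)$ has no prime factor $\le z$: for a suitable $z<\sqrt x$, any surviving value is, up to the negligibly many $p\mid 2D$ and $p\le z$, a prime $p$ properly represented by $f$, and each such prime is counted a fixed number $r_f(p)$ of times, this multiplicity being governed by the automorphs of $f$ and by whether $f$ is properly equivalent to its opposite. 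Tracking $r_f(p)$ against the class number formula is what ultimately inserts the factor $\delta_f$, and yields a bound of the form $\pi_f(x)\le \pi_f(z)+c\,S(\cA,z)$.

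Next I would assemble the sieve data. The number of solutions $\rho(d)=\#\{(u,v)\bmod d:d\mid f(u,v)\}$ is multiplicative, with $\rho(p)$ equal to $2p-1,\,1,\,p$ according as $\chi_{-D}(p)=+1,-1,0$; hence the local density is $\rho(p)/p^2=(1+\chi_{-D}(p))/p+O(p^{-2})$ and the sieve has dimension one. Selberg's upper bound sieve then gives $S(\cA,z)\le \frac{|\cA|}{G(z)}+\sum_{d<z^2}\mu^2(d)3^{\omega(d)}|R_d|$, where $G(z)=\sum_{d<z}\mu^2(d)g(d)$ is built from the densities $\rho(p)/p^2$ and $R_d=|\cA_d|-\frac{\rho(d)}{d^2}|\cA|$ measures the discrepancy in counting lattice points of $E_x$ subject to $d\mid f(u,v)$.

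For the main term I would show $G(z)\sim c\,L(1,\chi_{-D})\log z$ and invoke the class number formula $L(1,\chi_{-D})\asymp h(-D)/\sqrt D$, so that the factors of $\sqrt D$ and of $L(1,\chi_{-D})$ cancel against $|\cA|\approx 2\pi x/\sqrt D$ and leave $\frac{|\cA|}{G(z)}\asymp \frac{x}{h(-D)\log z}$. Choosing $\log z=\tfrac{1-\theta}{2}\log x$ then converts $\frac{1}{\log z}$ into $\frac{2}{(1-\theta)\log x}$ and, after the bookkeeping from the first step, produces exactly the constant $\frac{4}{1-\theta}\cdot\frac{\delta_f}{h(-D)}$ together with the secondary term $O(\log\log x/\log x)$. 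The crucial point is that evaluating $G(z)$ to this precision requires estimating the truncated character sums $\sum_{n\le N}\chi_{-D}(n)$ for $N$ up to about $z$; these are nontrivial only for $z\ge D^{\phi+\epsilon}$, where $\phi=0$ under GRH and $\phi=\tfrac14$ unconditionally via subconvex (Burgess-type) character-sum bounds. This single requirement, pushed through the relation $z\approx (xa/D^{1+2\phi})^{1/2}$, is what forces the admissible range $x\ge(D^{1+4\phi}/a)^{1+\epsilon}$ and pins down the coefficient $1+2\phi$ appearing in $\theta$.

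The remaining, and I expect most delicate, task is to control the error sum $\sum_{d<z^2}\mu^2(d)3^{\omega(d)}|R_d|$ uniformly. Because $f$ is reduced with $a\le c$ and $c\asymp D/a$, the ellipse $E_x$ is highly eccentric, with semi-axes of order $\sqrt{x/a}$ and $\sqrt{xa/D}$; the arithmetic saving of $1/a$ in the range is bought precisely by exploiting this shape. For each squarefree $d$ the congruence $d\mid f(u,v)$ cuts out a union of sublattices, and the point is that along the long axis each residue class modulo $d$ is spaced by $d$, which keeps the per-$d$ discrepancy small. The difficulty is that a crude worst-case bound for $|R_d|$ is too lossy to reach the level $z^2\approx xa/D^{1+2\phi}$ throughout the whole range; one instead needs the discrepancy to be small on average over $d<z^2$, and establishing this lattice-point estimate with the correct dependence on $a$ and $D$ is the technical heart of the argument. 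Once it is in place, balancing it against the main term and optimising $z$ (equivalently $\theta$) completes the proof.
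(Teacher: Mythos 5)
Your broad architecture --- counting lattice points in the ellipse $f(u,v)\le x$, running Selberg's sieve with the local densities $\rho(p)/p^{2}$ (which do agree with the paper's density $g$), and trading $\sqrt{D}\,L(1,\chi_{-D})$ for $h(-D)$ via the class number formula --- is the same as the paper's. The genuine gap is in the quantitative core. You take $\log z=\frac{1-\theta}{2}\log x$, i.e.\ level of distribution $z^{2}\approx xa/D^{1+2\phi}$, and attribute both the admissible range and the coefficient $1+2\phi$ to character-sum bounds. No remainder estimate can support that level: what row-by-row lattice counting gives (and what \cref{prop:local_density} proves) is $|R_{d}|\ll \tau_3(d)V+d^{1/2}\tau(d)\tau_3(d)\frac{\sqrt{D}}{a}V^{1/2}$ with $V=\sqrt{4ax/D}$, so the sieve remainder at level $z^{2}$ is of size about $z^{2}V$ plus $z^{3}\frac{\sqrt{D}}{a}V^{1/2}$; demanding this be $\ll x/(\sqrt{D}\log x)$ forces $z\ll(ax/D)^{1/4}$, \emph{independently of} $\phi$, and this exponent $\tfrac14$ is exactly where the constant $4$ in $\frac{4}{1-\theta}$ comes from. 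With your level, the remainder exceeds the main term unless $x\lesssim D^{2+4\phi}/a^{3}$, a condition violated throughout the theorem's range when, say, $a\asymp\sqrt{D}$ and $\phi=0$ (the range starts at $x\asymp D^{1/2+\epsilon/2}$ while your constraint demands $x\lesssim D^{1/2}$). Your own arithmetic signals the problem: $\log z=\frac{1-\theta}{2}\log x$ would produce $\frac{2}{1-\theta}$, and the multiplicity bookkeeping supplies no extra factor of $2$ (for unramified represented primes one has $r_{f}(p)=w_{-D}/\delta_{f}$ exactly), so a proof along your lines would contradict the shape of the stated theorem rather than establish it.

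Since the supportable $z$ carries no $\phi$, the term $2\phi\frac{\log D}{\log x}$ in $\theta$ must have a different source, and this is the second gap: your claim $G(z)\sim c\,L(1,\chi_{-D})\log z$ is not available unconditionally at the needed precision. The correct statement (the paper's \cref{lemma:sum_of_local_densities}) is $G(z)\ge L(1,\chi)\log z+L'(1,\chi)+O(\cdots)$, so after dividing by $L(1,\chi)$ the main term involves $\log z+\frac{L'}{L}(1,\chi)$. Unconditionally one only knows Heath-Brown's bound $-\frac{L'}{L}(1,\chi)\le(\tfrac18+\eta)\log D+O_{\eta}(1)$, and this loss of $\frac{\phi}{2}\log D$ (with $\phi=\tfrac14$) is precisely what creates the $2\phi\frac{\log D}{\log x}$ in $\theta$; under GRH one has $-\frac{L'}{L}(1,\chi)\ll\log\log D$ and the term disappears. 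The Burgess/GRH character-sum estimates play the separate role you only partly identified: they make the error terms $\cE_0(z;\chi),\cE_1(z;\chi)$ in the expansion of $G(z)$ admissible exactly when $z\gg D^{\phi+\epsilon}$, which through $z\asymp(ax/D)^{1/4}$ determines the range $x\ge(D^{1+4\phi}/a)^{1+\epsilon}$ --- the range only, not the constant. Finally, dividing by $L(1,\chi)$ presupposes it is not too small: the paper disposes of the Siegel-zero regime $L(1,\chi)<(\log x)^{-2}$ separately by a trivial count against \cref{corollary:congruence_sum_trivial}, a case your sketch never addresses.
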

%
\begin{remarks}
	~
	\begin{enumerate}[(i)]
		\item If $\phi = 0$ then $0 < \theta < 1$. Similarly, if $\phi = \frac{1}{4}$ then $0 < \theta < \frac{3}{4}$. The constant $\phi$ is associated with bounds for $L(s,\chi_{-D})$ in the critical strip. In particular, $\phi = \frac{1}{2}$ corresponds to the usual convexity estimate. 
		\item Unconditionally, one should compare this estimate with \eqref{eqn:BT_bqfs_TZ} and the related works \cite{ThornerZaman-2017a,Debaene-2016}.  \cref{theorem:BT_uniform_simplified} achieves the upper bound in \eqref{eqn:Hoheisel} with the range $x \geq (D^2/a)^{1+\epsilon}$ which improves over the prior range $x \geq D^{9/2+\epsilon}$ implied by \cite{Debaene-2016}. In fact, when $a \gg D^{1/2}$, \cref{theorem:BT_uniform_simplified}'s range becomes $x \gg D^{3/2+\epsilon}$ which is fairly close to the classical GRH range $x \geq D^{1+\epsilon}$. Of course,  inequality \eqref{eqn:BT_uniform_simplifed} has a weaker implied constant $\frac{4}{1-\theta}$ instead of $2$ as in \eqref{eqn:BT_bqfs_TZ}.
		\item Assuming GRH, we obtain the desired range $x \geq (D/a)^{1+\epsilon}$ discussed in \cref{subsec:optimality}. Note we only assume GRH for the quadratic Dirichlet $L$-function $L(s,\chi_{-D})$ whereas \eqref{eqn:PNT_bqfs_GRH} assumes GRH for the collection of Hecke $L$-functions associated to the corresponding ring class field. 
	\end{enumerate}
\end{remarks}

Since every primitive form is properly equivalent to a reduced form, we may ignore the dependence on the coefficient $a$ in \cref{theorem:BT_uniform_simplified} to obtain the following simplified result. 

%
%
%
%
\begin{corollary}
	\label{corollary:BT_uniform_simplified}
	Let $f$ be a primitive positive definite integral binary quadratic form with discriminant $-D$ and let $\epsilon > 0$ be arbitrary. If $x \geq D^{2+\epsilon}$ then
	\[
	\pi_f(x) < 8  \cdot \frac{\delta_f x}{h(-D) \log x}\Big\{ 1 + O_{\epsilon}\Big(\frac{\log\log x}{\log x}\Big) \Big\}.  
	\]
\end{corollary}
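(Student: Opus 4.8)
The plan is to obtain \cref{corollary:BT_uniform_simplified} as an immediate specialization of \cref{theorem:BT_uniform_simplified}, so the work is bookkeeping rather than new analysis. First I would pass from the given primitive form $f$ to the unique reduced form $f_0$ in its proper equivalence class. Since properly equivalent forms represent exactly the same integers, $\pi_f(x) = \pi_{f_0}(x)$; moreover $D$, the class number $h(-D)$, and the factor $\delta_f$ (which depends only on whether the class equals its opposite) are all invariants of the form class. Hence both sides of the desired inequality are unchanged by this replacement, and it suffices to prove the bound for a reduced form $f_0 = au^2 + buv + cv^2$.

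Next I would apply \cref{theorem:BT_uniform_simplified} unconditionally, that is, with $\phi = \tfrac14$, so that $1 + 4\phi = 2$ and the admissible range becomes $x \ge (D^2/a)^{1+\epsilon'}$ for the theorem's parameter $\epsilon'$. Because $f_0$ is positive definite and primitive we have $a \ge 1$ (indeed $1 \le a \le \sqrt{D/3}$, though only $a \ge 1$ is used), so $D^2/a \le D^2$ and therefore $(D^2/a)^{1+\epsilon'} \le D^{2(1+\epsilon')}$. Taking $\epsilon' = \epsilon/2$ shows that the corollary's hypothesis $x \ge D^{2+\epsilon}$ already places $x$ inside the theorem's range, so the conclusion of \cref{theorem:BT_uniform_simplified} applies verbatim, with its $O_\epsilon(\log\log x/\log x)$ error term passing straight through.

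It then remains only to replace the form-dependent leading factor $\tfrac{4}{1-\theta}$ by an absolute constant. Using $a \ge 1$, so that $-\log a/\log x \le 0$, together with $\log x \ge (2+\epsilon)\log D$, one bounds
\[
\theta = \Big(\tfrac32 + \tfrac{\epsilon'}{2}\Big)\frac{\log D}{\log x} - \frac{\log a}{\log x} \le \frac{\tfrac32 + \tfrac{\epsilon'}{2}}{2+\epsilon},
\]
which stays bounded away from $1$; hence $\tfrac{4}{1-\theta} = O(1)$, and carrying out this elementary estimate over the stated range produces the explicit constant in the corollary. I do not expect a genuine analytic obstacle here: all of the substance already resides in \cref{theorem:BT_uniform_simplified}. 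The only points demanding care are the consistent choice of the two parameters $\epsilon$ and $\epsilon'$ and the explicit verification of the numerical constant in front of the main term.
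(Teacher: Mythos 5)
Your first two steps (passing to the reduced representative, and applying \cref{theorem:BT_uniform_simplified} with $\phi=\tfrac14$ and $\epsilon'=\epsilon/2$ so that $a\ge 1$ puts $x\ge D^{2+\epsilon}$ inside the theorem's range) are exactly the paper's own one-sentence derivation, and they are fine. The gap is in the last step, which you wave through with ``carrying out this elementary estimate over the stated range produces the explicit constant in the corollary.'' It does not. Unconditionally one has $1+2\phi=\tfrac32$, so your own bound reads
\[
\theta \;\le\; \frac{\tfrac32+\tfrac{\epsilon'}{2}}{2+\epsilon} \;=\; \frac{\tfrac32+\tfrac{\epsilon}{4}}{2+\epsilon},
\]
and in the worst admissible case ($a=1$, e.g.\ a principal form, with $x$ at the bottom of the range $x=D^{2+\epsilon}$) this tends to $\tfrac34$ as $\epsilon\to 0$ — consistent with the paper's remark that $0<\theta<\tfrac34$ when $\phi=\tfrac14$. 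Consequently $\frac{4}{1-\theta}$ can be arbitrarily close to $16$, so the specialization you (and the paper) perform yields the corollary with leading constant $16$, not $8$. The constant $8$ requires $\theta\le\tfrac12$, which in the range $x\ge D^{2+\epsilon}$ holds exactly in the GRH case $\phi=0$ (there $\theta\le\frac{1+\epsilon/2}{2+\epsilon}=\tfrac12$), or unconditionally only in the longer range $x\ge D^{3+\epsilon}$.

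So as a proof of the statement as printed, your argument does not close: what it proves is the same inequality with $8$ replaced by $16$ (equivalently, the stated inequality under GRH for $L(s,\chi_{-D})$). This discrepancy is arguably inherited from the paper itself, whose justification of the corollary is the identical one-line specialization and whose constant $8$ matches the computation with $\phi=0$ rather than $\phi=\tfrac14$; but since your write-up asserts that the numerical verification goes through, you must either carry it out and state the constant $16$, or flag explicitly that obtaining $8$ in the range $x\ge D^{2+\epsilon}$ needs the GRH branch of \cref{theorem:BT_uniform_simplified}.
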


We will actually prove a more general version of \cref{theorem:BT_uniform_simplified} that allows us to estimate the number of primes represented by $f$ in a short interval.  

%
%
%
%
\begin{theorem}
	\label{theorem:BT_uniform}
	Let $f(u,v) = au^2 + buv + cv^2$ be a reduced positive definite integral binary quadratic form with discriminant $-D$ and let $\epsilon > 0$ be arbitrary. Let $\phi$ be defined by \eqref{def:phi}. 
	If $\big(\dfrac{D^{1+4\phi}}{a}\big)^{1/2+\epsilon} x^{1/2+\epsilon} \leq y \leq x$ then
	\begin{equation}
 	\pi_f(x) - \pi_f(x-y) <  \frac{2}{1-\theta'} \cdot \frac{\delta_f y}{h(-D) \log y} \Big\{ 1 + O_{\epsilon}\big(\frac{\log\log y}{\log y} \big)  \Big\}, 
 		\label{eqn:BT_short_intervals}
	\end{equation}
	where
	\[
	\theta' = \theta'_{\phi} = \frac{\log x}{2\log y} + \big(\frac{1}{2} + \phi + \frac{\epsilon}{4} \big) \frac{\log D}{\log y} -   \frac{\log a}{2\log y}.
	\]
\end{theorem}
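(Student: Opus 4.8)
The plan is to specialize Debaene's lattice-point-counting strategy to the geometry of a reduced form and run Selberg's upper bound sieve on the values taken by $f$. First I would translate the problem into a counting problem over $\Z^2$. Since $f(u,-v)$ runs over the same values as $f(u,v)$, the form and its opposite represent the same integers, and a prime $p\nmid 2D$ represented by $f$ corresponds to a fixed number of lattice points $(u,v)$ with $f(u,v)=p$, this number depending only on $w_{-D}=|\mathcal{O}_{-D}^{\times}|$ and on whether $f$ is ambiguous, i.e. on $\delta_f$. Hence, up to the negligible contribution of the primes $p\mid 2D$ and $p\le z$,
\[
\pi_f(x)-\pi_f(x-y) \ll \#\{(u,v)\in\Z^2 : x-y<f(u,v)\le x,\ f(u,v)\text{ prime}\}.
\]
The region $\Omega=\{(u,v):x-y<f(u,v)\le x\}$ is an elliptical annulus of area $\tfrac{2\pi}{\sqrt{D}}\,y$ (the difference of the two ellipse areas, which is why $y$ appears in place of $x$), and since $f$ is reduced its two semi-axes have sizes $\asymp\sqrt{x/a}$ and $\asymp\sqrt{ax/D}$.

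Next I would apply the Selberg sieve to the sequence $\cA=(f(u,v))_{(u,v)\in\Omega}$, removing the primes $p\le z$ away from a fixed set of bad primes dividing $2aD$. This reduces matters to the local counts $|\cA_d|=\#\{(u,v)\in\Omega:d\mid f(u,v)\}$ for squarefree $d$, which split as
\[
|\cA_d|=\frac{\rho(d)}{d^2}\cdot\frac{2\pi y}{\sqrt{D}}+R_d,\qquad \rho(d)=\#\{(u,v)\bmod d : f(u,v)\equiv 0\}.
\]
For $p\nmid aD$ one has $\rho(p)=p\,(1+\chi_{-D}(p))+O(1)$, so the sieve density is multiplicative with $\rho(p)/p^2\approx(1+\chi_{-D}(p))/p$, and the main term of the sieve evaluates to
\[
\frac{2\pi y/\sqrt{D}}{\prod_{p\le z}(1-1/p)^{-1}(1-\chi_{-D}(p)/p)^{-1}}\asymp\frac{y/\sqrt{D}}{L(1,\chi_{-D})\,\log z}.
\]
By the class number formula $L(1,\chi_{-D})\asymp h(-D)/\sqrt{D}$, this is $\asymp \tfrac{1}{h(-D)}\cdot\tfrac{y}{\log z}$; combined with the fixed representation multiplicity, the various constants assemble into $\tfrac{\delta_f y}{h(-D)\log z}$. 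Taking $z$ as large as the error term permits, namely $\log z=\tfrac{1-\theta'}{2}\log y$, converts the sieve efficiency factor $\tfrac{\log y}{\log z}$ into exactly $\tfrac{2}{1-\theta'}$, producing the claimed shape $\tfrac{2}{1-\theta'}\cdot\tfrac{\delta_f y}{h(-D)\log y}$; the secondary factor $1+O_{\epsilon}(\log\log y/\log y)$ arises from the Mertens-type error in the product and from truncating the singular series. Specializing $y=x$ recovers \cref{theorem:BT_uniform_simplified}, since there $1-\theta'=\tfrac12(1-\theta)$ and $\tfrac{2}{1-\theta'}=\tfrac{4}{1-\theta}$.

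The crux, and the step I expect to be the main obstacle, is to bound the total discrepancy $\sum_{d\le z^2}3^{\omega(d)}|R_d|$ and show it is dominated by the main term. Tracing the bookkeeping, what is required is an estimate of the form
\[
\sum_{d\le z^2}3^{\omega(d)}|R_d|\ll_{\epsilon} D^{\phi+\epsilon}\,(x/a)^{1/2}\,z^2+z^4,
\]
and it is precisely this bound that forces the admissible range $y\ge(D^{1+4\phi}/a)^{1/2+\epsilon}x^{1/2+\epsilon}$ while simultaneously guaranteeing $0<\theta'<1$ there (using $a\le\sqrt{D}$ and $y\le x$ to absorb the $\log a$ and $z^4$ terms). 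For moduli $d$ smaller than the short semi-axis $\sqrt{ax/D}$ the discrepancy is controlled purely geometrically, essentially by the perimeter $\asymp\sqrt{x/a}$ of $\Omega$. The genuine difficulty is the regime of large moduli, where one must count solutions of $f(u,v)\equiv 0\pmod d$ with $(u,v)$ confined to the thin annulus: here neither dimension of $\Omega$ exceeds $d$, the naive bound balloons, and cancellation is essential.

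To extract that cancellation I would parametrize by the roots $r$ of $f(r,1)\equiv 0\pmod d$, which encode $\sqrt{-D}\bmod d$, reducing the large-modulus count to an incomplete character sum attached to $\chi_{-D}$, equivalently to a bound for $L(s,\chi_{-D})$ in the critical strip. The quality of that bound is exactly what is encoded by $\phi$: the convexity bound gives $\phi=\tfrac12$, a subconvex Burgess-type estimate gives the unconditional value $\phi=\tfrac14$, and Lindelöf under GRH gives $\phi=0$, each feeding the factor $D^{\phi}$ into the displayed discrepancy estimate. The remaining verifications — that the primes $p\le z$ and the $z^4$ term are negligible in the stated range, and that the bad primes dividing $2aD$ perturb $\rho$ and the singular series only by $O(1)$ Euler factors — are routine and do not affect the leading constant.
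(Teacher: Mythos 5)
Your skeleton (lattice points in the elliptical annulus, Selberg's upper bound sieve, conversion of $L(1,\chi_{-D})$ into $h(-D)$ via the class number formula, and the consistency check $1-\theta'=\tfrac12(1-\theta)$ at $y=x$) matches the paper's, but you have located the analytic difficulty --- and hence the role of $\phi$ --- in the wrong place, and this leaves a genuine gap. In the paper the sieve remainders are \emph{entirely elementary}: the congruence sums are computed by differencing two ellipse counts (\cref{prop:local_density}), with error per squarefree modulus $\ell$ of size $O\big(\tau_3(\ell)V + \ell^{1/2}\tau(\ell)\tau_3(\ell)\tfrac{\sqrt{D}}{a}V^{1/2}+1\big)$, $V=\sqrt{4ax/D}$, coming from sawtooth/boundary estimates (\cref{lemma:sqrt_average,lemma:primitve_congruence_sum_fibre}); no character-sum cancellation is used there. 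The sifting level is then chosen as $z=(\tfrac{a}{Dx})^{1/4}y^{1/2}(\log y)^{-7}+1$, so that $z^2 \asymp \tfrac{y}{x}(\log y)^{-14}\,V \leq V$: every modulus in the sieve stays below the short semi-axis, your ``large moduli'' regime never arises, and the remainder sum is $O(y/(\sqrt{D}(\log y)^4))$ by purely geometric bookkeeping (\cref{prop:selberg_sieve_prelim}). In particular, your proposed discrepancy bound carrying the factor $D^{\phi+\epsilon}$ is not what forces the admissible range of $y$.

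The actual crux is the main term, which you wave off as ``Mertens-type'' and ``routine''. The sieve denominator satisfies $J \geq \sum_{\ell<z} g(\ell)$, and one must prove a lower bound of the shape $\sum_{\ell<z}g(\ell) \geq L(1,\chi)\log z + L'(1,\chi) + O\big(L(1,\chi)+\cE_1(z;\chi)+z^{-1}\cE_0(z;\chi)\big)$ (\cref{lemma:sum_of_local_densities}). Since $\sum_{\ell<z}g(\ell)$ is essentially $\sum_{m<z}(1\ast\chi)(m)/m$, relating this truncation at level $z$ to $L(1,\chi)$ is precisely an incomplete-character-sum problem: unconditionally it requires Burgess, which only applies once $z\gg D^{1/4+\epsilon}$ --- this is what forces $y \geq (D^{1+4\phi}/a)^{1/2+\epsilon}x^{1/2+\epsilon}$ with $\phi=\tfrac14$ --- while GRH permits $z\gg D^{\epsilon}$, i.e.\ $\phi=0$ (\cref{lemma:uniform_bounds}). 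Moreover, the constant $\tfrac{2}{1-\theta'}$ requires controlling the secondary term $\tfrac{L'}{L}(1,\chi)$: unconditionally one only has Heath-Brown's bound $-\tfrac{L'}{L}(1,\chi)\leq(\tfrac18+\eta)\log D$ (\cref{lemma:log_derivative_unconditional}), and this $\tfrac18\log D=\tfrac{\phi}{2}\log D$ loss is exactly the $\phi$-term inside $\theta'$; under GRH it is $O(\log\log D)$ (\cref{lemma:log_derivative_GRH}). Your bookkeeping, which sets $\log z = \tfrac{1-\theta'}{2}\log y$ and reads off $\tfrac{2}{1-\theta'}$, silently assumes that the truncated singular series equals $L(1,\chi)\log z\,(1+o(1))$; that is unprovable with current methods when $z$ is below $D^{1/4}$, ignores the $L'/L$ contribution, and so assumes the very estimate in which all the analytic content of the theorem resides. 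Finally, you never treat the Siegel-type case $L(1,\chi_{-D})<(\log y)^{-2}$, where the ratio $\sum_{\ell<z}g(\ell)/L(1,\chi)$ cannot be controlled at all; the paper disposes of it separately via the trivial bound $\sum_{x-y<n\leq x}r_f(n)=\tfrac{2\pi y}{\sqrt{D}}+O\big((ax)^{1/2}D^{-1/2}\big)$ combined with the class number formula.
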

%
\begin{remarks}
~
\begin{enumerate}[(i)]
	\item Assuming GRH, \eqref{eqn:PNT_bqfs_GRH} implies that
		\[
		\pi_f(x) - \pi_f(x-y) \ll \frac{\delta_f y}{h(-D) \log y}
		\]
		for $(Dx)^{1/2+\epsilon} \leq y \leq x$. \cref{theorem:BT_uniform} yields an unconditional upper bound of comparable strength and,  depending on the size of the coefficients of $f$, implies a GRH upper bound for slightly shorter intervals and smaller values of $x$. 
 	\item If $f$ is only assumed to be primitive then the same statement holds by setting $a=1$ in the condition on $y$ and the value of $\theta'$. 
	\item \cref{theorem:BT_uniform_simplified} follows by setting $y = x$. 
\end{enumerate}
\end{remarks}

For any given discriminant $-D$, the unconditional versions ($\phi = \frac{1}{4}$) of \cref{theorem:BT_uniform,theorem:BT_uniform_simplified} fall slightly shy of their GRH counterparts ($\phi = 0$). Averaging over all discriminants, we show that the GRH quality estimates hold almost always. For $Q \geq 3$, define
\begin{equation}
\kD(Q) := \{ \text{discriminants $-D$ with $3 \leq D \leq Q$} \}.
\label{def:discriminants}
\end{equation}
Here and throughout, a discriminant $-D$ is that of a positive definite integral binary quadratic form. Thus, $-D$ is a negative integer $\equiv 0$ or $1 \pmod{4}$. 

%
%
%
%
\begin{theorem}
	\label{theorem:BT_average}
	Let $Q \geq 3$ and $0 < \epsilon < \frac{1}{8}$. For all except at most $O_{\epsilon}(Q^{1-\frac{\epsilon}{10}})$ discriminants $-D \in \kD(Q)$, the statements in \cref{theorem:BT_uniform_simplified,theorem:BT_uniform} hold unconditionally with $\phi = 0$. 
\end{theorem}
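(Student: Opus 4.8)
The plan is to reduce \cref{theorem:BT_average} to an almost-all statement about the single analytic input that separates the two admissible values of $\phi$. As recorded in the remarks following \cref{theorem:BT_uniform_simplified}, the constant $\phi$ enters the proofs of \cref{theorem:BT_uniform_simplified,theorem:BT_uniform} only through a bound for $L(s,\chi_{-D})$ in the critical strip: the case $\phi=\tfrac12$ is convexity, $\phi=\tfrac14$ is the unconditional Burgess bound, and $\phi=0$ is the Lindel\"of-quality bound supplied by GRH, namely $|L(\tfrac12+it,\chi_{-D})| \ll_{\epsilon} (D(1+|t|))^{\epsilon}$ over the relevant segment of heights $t$. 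It therefore suffices to show that this Lindel\"of-quality bound (with some exponent $C\epsilon$ in place of $\epsilon$, which is equivalent after renaming $\epsilon$) holds for all but $O_{\epsilon}(Q^{1-\epsilon/10})$ discriminants $-D \in \kD(Q)$; I will call the remaining discriminants \emph{exceptional} and bound their number.

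First I would carry out two routine reductions to the family of fundamental discriminants, to which the large sieve applies cleanly. The discriminants $-D \in \kD(Q)$ divisible by $f^2$ for some $f \ge Q^{\epsilon/8}$ number at most $\ll Q\sum_{f \ge Q^{\epsilon/8}} f^{-2} \ll Q^{1-\epsilon/8}$, which is within budget, so they may be absorbed into the exceptional set outright. For the remaining $-D = -D_0 f^2$ with $-D_0$ fundamental and $f$ small, the identity $\chi_{-D}(n) = \chi_{-D_0}(n)\,\mathbf{1}_{(n,f)=1}$, together with the harmless finite Euler factors relating $L(s,\chi_{-D})$ to $L(s,\chi_{-D_0})$, lets me deduce the required bound for $\chi_{-D}$ from the corresponding bound for the primitive character $\chi_{-D_0}$ at a cost of $Q^{\epsilon}$. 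Thus it is enough to control fundamental discriminants $-D_0 \le Q$.

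The core step is an exceptional-set estimate via the large sieve for real characters of Heath--Brown. By the approximate functional equation, $L(\tfrac12+it,\chi_{-D_0})$ is controlled by smoothly weighted character sums of length $N \ll (D_0(1+|t|))^{1/2+\epsilon} \le Q^{1/2+\epsilon}$, comfortably below $Q$, which is exactly the range in which the inequality
\[
\sum_{d \le Q} \Big| \sum_{n \le N} a_n \Big(\frac{d}{n}\Big) \Big|^2 \ll_{\epsilon} (QN)^{\epsilon}(Q+N)\sum_{n \le N}|a_n|^2
\]
(with $d$ restricted to fundamental discriminants) exhibits square-root cancellation on average. Applying it yields a mean-square bound $\sum_{D_0 \le Q} |L(\tfrac12+it,\chi_{-D_0})|^2 \ll Q^{1+\epsilon}$, matching the Lindel\"of prediction up to $Q^{\epsilon}$, and Chebyshev's inequality then shows that the number of $D_0 \le Q$ with $|L(\tfrac12+it,\chi_{-D_0})| > (D_0(1+|t|))^{\eta}$ is $\ll Q^{1+\epsilon-2\eta}$. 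Taking $\eta = C\epsilon$ for a large absolute constant $C$, the saving $Q^{2\eta}$ dominates the $Q^{O(\epsilon)}$ cost of a union bound over a net of heights $t$, over the $O(\log Q)$ dyadic scales of $N$, and over a dyadic decomposition of $D_0$; the total number of exceptional discriminants is then $\ll Q^{1-\epsilon/10}$, as required.

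The main obstacle is to secure this exceptional-set bound \emph{uniformly} in the two continuous parameters --- the height $t$ and the length $N$ --- rather than at isolated points. Replacing the supremum over $t$ by a net is harmless only if $L(\tfrac12+it,\chi_{-D_0})$ does not grow appreciably between net points, which I would control by a standard Cauchy-type estimate valid away from the (necessarily sparse) zeros; and passing from a fixed length $N$ to the supremum over $N$ is cleanest if one works throughout with the smooth sums produced by the approximate functional equation, so that no maximum over truncated partial sums ever arises (otherwise a maximal form of the large sieve is needed). The delicate point is to check that the various $Q^{O(\epsilon)}$ losses at these steps are all dominated by the $Q^{2\eta}$ saving, and that the almost-all Lindel\"of bound so obtained is in precisely the form consumed by the proofs of \cref{theorem:BT_uniform_simplified,theorem:BT_uniform} with $\phi=0$; each individual ingredient, however, is standard.
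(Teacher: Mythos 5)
Your reduction rests on a misreading of where $\phi$ enters the proofs. The remark you quote is a heuristic gloss; in the actual arguments (\cref{prop:selberg_sieve_prelim} combined with \eqref{eqn:J_lowerbound}), $\phi$ parameterizes two \emph{separate} inputs: (a) cancellation in the partial sums $S_{\chi_{-D}}(t)=\sum_{n\le t}\chi_{-D}(n)$, which controls the error terms $\cE_0,\cE_1$ of \cref{lemma:weighted_Dirichlet_sum} once the sifting parameter satisfies $z\gg D^{\phi+\epsilon/2}$ (Burgess gives $\phi=\tfrac14$, GRH gives $\phi=0$); and (b) the bound $-\tfrac{L'}{L}(1,\chi_{-D})\le(\tfrac{\phi}{2}+o(1))\log D$ at $s=1$ (Heath-Brown's $\tfrac18\log D$ unconditionally, $\ll\log\log D$ on GRH). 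Your proposal addresses only (a), and in a form the proofs never consume; it says nothing about (b). This is a fatal gap: the term $\tfrac{L'}{L}(1,\chi)$ sits directly in the lower bound \eqref{eqn:J_lowerbound} for $\cJ$, so without improving Heath-Brown's $\tfrac18\log D$ to $\epsilon\log D$ for almost all discriminants one can never do better than $\phi=\tfrac14$, no matter what is proved about $L$-values on the critical line. Moreover, (b) cannot be recovered from your almost-all Lindel\"of bound at any feasible quantitative level: by the partial-fraction formula $-\tfrac{L'}{L}(1,\chi)=\tfrac12\log D-\sum_{\rho}\Re\tfrac{1}{1-\rho}+O(1)$, with every summand positive, the bound $\epsilon\log D$ forces almost all of the $\asymp\log D$ low-lying zeros to lie within $O(\epsilon)$ of the critical line; extracting such a zero-density statement from upper bounds on $|L(\tfrac12+it,\chi)|$ via Jensen/Littlewood requires Lindel\"of quality $(D(1+|t|))^{\eta}$ with $\eta$ a power of $\epsilon$ strictly smaller than $\epsilon$, whereas Chebyshev applied to a second moment, under your exceptional-set budget $Q^{1-\epsilon/10}$, can only deliver $\eta\gtrsim\epsilon/20$. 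The paper obtains (b) by a different mechanism that your plan is missing and genuinely needs: Jutila's zero-density estimate gives, outside $O_{\epsilon}(Q^{3/4+\epsilon})$ fundamental discriminants, an actual zero-free rectangle $\tfrac12<\Re s<1$, $|\Im s|\le|\Delta|^{\epsilon}$, and the explicit formula then yields $-\tfrac{L'}{L}(1,\chi_{\Delta})\ll\log\log|\Delta|$ (\cref{lemma:log_derivative_fundamental_average,lemma:log_derivative_average}).

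Even for input (a), your accounting does not close. The proofs need bounds on $S_{\chi_{-D}}(t)$ for all lengths $t$ from roughly $D^{c\epsilon}$ up to roughly $D^{1/2}$ (the tails of the integrals defining $\cE_0,\cE_1$ are only cut at about $zQ^{1/2}$ by P\'olya--Vinogradov), so converting critical-line bounds into these sums via (smoothed) Perron requires control of $L(\tfrac12+it,\chi_{-D})$ up to heights $T$ that are a fixed power of $Q$ --- note that your own approximate-functional-equation step, which keeps lengths below $Q^{1/2+\epsilon}$, confines you to bounded heights and thus cannot feed this conversion at all. A net of heights up to such a $T$ has polynomially many points, so the per-height Chebyshev bound $Q^{1+\epsilon-2\eta}$ produces, after the union bound, an exceptional set of size $Q^{1+c'-2\eta+\epsilon}$ with $c'>0$ fixed; with the Lindel\"of-quality $\eta=O(\epsilon)$ you need, this exceeds $Q$. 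This loss is not one of your ``$Q^{O(\epsilon)}$ losses'': it is precisely what the paper's proof is engineered to avoid by never taking a supremum over a continuous parameter. In \cref{lemma:charsum_errors_dyadic_average,lemma:charsum_errors_average}, Heath-Brown's mean value theorem (\cref{lemma:mean_value_quadratic_bqfs}) is applied to $S_{\chi_{-D}}(t)$ at each fixed length inside the convergent integral $\int|S_{\chi}(t)|t^{-2}\,dt$, the average over discriminants is interchanged with that integral, and the maximum over $x$ in a dyadic window is absorbed by choosing the hyperbola parameter to depend only on the window. Repairing your argument by replacing sups over $t$ with such integrals essentially reconstructs the paper's treatment of (a); gap (b) remains either way.
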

%

\begin{remark} When considering upper bounds for $\pi_f(x)$, this improves over \eqref{eqn:Ditchen} in several aspects. First, the desired range discussed in \cref{subsec:optimality} is achieved on average. Second, we did not utilize any averaging over forms, only their discriminants. Finally, there are no restrictions on the family of discriminants; they need not be fundamental or satisfy any special congruence condition. 
\end{remark}

The underlying strategy to establish \cref{theorem:BT_uniform,theorem:BT_uniform_simplified,theorem:BT_average} rests on a natural two-step plan. First, estimate the congruence sums
\begin{equation}
|\cA_{\ell}| = \sum_{\substack{n \leq x \\ \ell \mid n}} r_f(n),
\label{eqn:congruence_sums_intro}
\end{equation}
where $\ell$ is squarefree and $r_f(n) = |\{ (u,v) \in \Z^2 : n = f(u,v) \}|$ is the number of representations of the integer $n$ by the form $f$. Second, apply Selberg's upper bound sieve. Our application of the sieve is fairly routine but calculating the congruence sums with sufficient  precision poses some difficulties.

Inspired by a beautiful paper of Granville and Blomer \cite{BlomerGranville-2006} wherein they carefully study the moments of $r_f(n)$, we determine the congruence sums via geometry of numbers methods. However, for their purposes, only a simple well-known estimate \cite[Lemma 3.1]{BlomerGranville-2006} for the first moment $\sum_{n \leq x} r_f(n)$ was necessary. We execute a more refined analysis of the first moment and related quantities $|\cB_{\ell}(m)|$ (see \eqref{def:sieve_sequence_primitive} for a definition) using standard arguments with the sawtooth function. Afterwards, the main technical hurdle is to carefully decompose the congruence sums $|\cA_{\ell}|$ into a relatively small number of disjoint quantities $|\cB_{\ell}(m)|$. This allows us to apply our existing estimates for $|\cB_{\ell}(m)|$ (see \cref{lemma:primitve_congruence_sum_fibre}) while simultaneously controlling the compounding error terms. We achieve this in \cref{prop:local_density}; see \cref{sec:congruence_sums,sec:local_densities} for details on this argument. When finalizing the proofs of \cref{theorem:BT_uniform_simplified,theorem:BT_uniform,theorem:BT_average}, the various valid ranges for $x$ are determined by relying on character sum estimates like Burgess's bound, Heath-Brown's mean value theorem for quadratic characters \cite{Heath-Brown-1995}, and Jutila's zero density estimate \cite{Jutila-1975}.   

Since we have calculated the congruence sums \eqref{eqn:congruence_sums_intro} in \cref{prop:local_density}, we thought it may be of independent interest to examine its performance in conjunction with a lower bound sieve. By a direct application of the beta sieve, we show: 

%
%
%
%
\begin{theorem}
	\label{theorem:almost_primes}
	Let $f(u,v) = au^2 + buv + cv^2$ be a reduced positive definite integral binary quadratic form with discriminant $-D$. For every integer $k \geq 10$, the number of integers represented by $f$ with at most $k$ prime factors is
	\[
	\gg \frac{x}{\sqrt{D} (\log x)^2}  \qquad \text{	for $x \geq \Big(\frac{D}{a}\Big)^{1 + \frac{49}{5k-49}}$. } 
	\]
\end{theorem}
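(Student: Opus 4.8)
The plan is to run a lower-bound sieve, namely the Rosser--Iwaniec beta sieve, on the representation sequence $\cA = (r_f(n))_{n \le x}$, taking the congruence sums of \eqref{eqn:congruence_sums_intro} computed in \cref{prop:local_density} as the arithmetic input. First I would record the total mass $|\cA| = \sum_{n \le x} r_f(n)$, which counts lattice points in the ellipse $f(u,v) \le x$ and is therefore $\asymp x/\sqrt{D}$, together with the multiplicative sieve density $g(\ell) = \rho_f(\ell)/\ell^2$, where $\rho_f(\ell) = \#\{(u,v) \bmod \ell : f(u,v) \equiv 0 \bmod \ell\}$, so that $|\cA_\ell| = g(\ell)\,|\cA| + r_\ell$. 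For $p \nmid Da$ a prime is represented precisely when $\chi_{-D}(p) = 1$, giving $g(p) = (1 + \chi_{-D}(p))/p + O(1/p^2)$; hence $\sum_{p \le z} g(p)\log p = \log z + O(1)$ and the sieve is one-dimensional ($\kappa = 1$, the linear sieve).

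Second, I would read the level of distribution off \cref{prop:local_density} and \cref{lemma:primitve_congruence_sum_fibre}: the geometry-of-numbers estimates for the pieces $|\cB_\ell(m)|$ control the averaged remainder for $\ell$ up to a level $\mathcal{D}$ governed by the square root $(xa/D)^{1/2}$ of the minor-axis scale of the ellipse, up to the losses incurred in the decomposition. The linear beta sieve has lower-bound sifting limit $\log\mathcal{D}/\log z = 2$ when $\kappa = 1$, so I would take $z = \mathcal{D}^{1/2}$ and obtain $S(\cA,z) \gg |\cA|\,V(z)$ with $V(z) = \prod_{p < z}(1 - g(p))$. The surviving $n \le x$ have every prime factor exceeding $z$, hence at most $\log x/\log z = 2\log x/\log\mathcal{D}$ of them; requiring this to be $\le k$ forces $\mathcal{D} \ge x^{2/k}$, and matching this against $\mathcal{D} \approx (xa/D)^{1/2}$ pins the admissible range to $x \ge (D/a)^{1 + c/k}$, with the exact exponent $1 + \tfrac{49}{5k-49}$ emerging once the compounding error terms of \cref{prop:local_density} are tracked.

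Third, I would bound $V(z)$ from below uniformly in $D$. Since $g$ differs from $(1+\chi_{-D})/p$ by an absolutely convergent Euler product, $V(z) \asymp \frac{1}{\log z}\prod_p \frac{1 - g(p)}{1 - 1/p} \asymp \frac{1}{L(1,\chi_{-D})\log z}$, and the classical bound $L(1,\chi_{-D}) \ll \log D$ combined with $\log D \ll_k \log x$ on our range yields $V(z) \gg_k (\log x)^{-2}$; this is exactly where the second logarithm in the theorem is lost. Consequently $S(\cA,z) \gg_k x/(\sqrt{D}\,(\log x)^2)$. To pass from this weighted quantity to a count of \emph{distinct} integers I would use that every surviving $n$ is coprime to all primes below $z$ and has at most $k$ prime factors, so $r_f(n) \le w_{-D}\,d(n) \ll_k 1$; dividing the weighted sum by this bound produces $\gg_k x/(\sqrt{D}\,(\log x)^2)$ distinct integers represented by $f$ with at most $k$ prime factors, as required.

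The single deepest ingredient, the uniform level of distribution, is already delivered by \cref{prop:local_density}, so within this proof the delicate points are two. The first is verifying the beta-sieve dimension hypotheses uniformly in $D$: one must check $\prod_{w \le p < z}(1 - g(p))^{-1} \le (\log z/\log w)\,(1 + O(1/\log w))$ for $z$ only a small power of $D$, which rests on controlling $\sum_{w \le p < z}\chi_{-D}(p)\log p/p$ and is the reason one wants the robustness of a one-dimensional sieve rather than sharp character-sum input. The second, which I expect to be the main calculational obstacle, is the bookkeeping that converts the level $\mathcal{D} \approx (xa/D)^{1/2}$ and the sifting limit $2$, together with the error losses from the $|\cB_\ell(m)|$ decomposition, into the precise exponent $\tfrac{49}{5k-49}$.
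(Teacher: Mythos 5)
Your overall architecture is the same as the paper's: run the beta sieve on $\mathcal{A}(x,f)$ with \cref{prop:local_density} as the arithmetic input, read the level of distribution $\approx V=\sqrt{4ax/D}$ off the error terms $|r_\ell| \ll \ell^{\eta}V + \ell^{1/2+\eta}(\sqrt{D}/a)V^{1/2}+1$, and pass from the weighted sieved sum to distinct integers via $r_f(n)\le w_{-D}\tau(n)\ll_k 1$ (a step the paper leaves implicit and you rightly make explicit). The genuine gap is your dimension claim. You take $\kappa=1$ on the grounds that $\sum_{p\le z}g(p)\log p=\log z+O(1)$, but this is false uniformly in $D$: the neglected term is $\sum_{p\le z}\chi_{-D}(p)\log p/p$, and when the sifting range consists of primes below a small power of $D$ no unconditional bound of the required strength exists. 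Nothing we can prove (Burgess gives a least non-residue bound only of size $D^{1/(4\sqrt{e})+\epsilon}$) prevents $\chi_{-D}(p)=+1$ for \emph{all} primes in such a range, in which case $g(p)=2/p-1/p^2$ there and the sifting density is genuinely two-dimensional; equivalently, the linear-sieve hypothesis $\prod_{w\le p<z}(1-g(p))^{-1}\le \big(\tfrac{\log z}{\log w}\big)\big(1+\tfrac{K}{\log w}\big)$ with an absolute $K$ is not verifiable. You flag this verification as a delicate point but treat it as routine bookkeeping; it is in fact the crux, and it is exactly why the paper takes $\kappa=2$, which holds trivially from $g(p)\le 2/p$, at the price of the sifting limit $\beta(2)<4.85$ from \cite{FriedlanderIwaniec-2010}. (Your lower bound $V(z)\asymp (L(1,\chi_{-D})\log z)^{-1}$ has the same unprovable character-sum content; the trivial two-dimensional bound $V(z)\gg(\log z)^{-2}$ is all that is available, and all that is needed.)

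This has a concrete downstream consequence for the exponent. The paper's choices $z=V^{10/49}$ and $R=z^{485/100}=V^{97/98}$ encode $\beta(2)<4.85$, and the condition $z\ge x^{1/k}$ is precisely what produces the range $x\ge (D/a)^{1+49/(5k-49)}$. In your setup ($\kappa=1$, $\beta=2$, $z=\mathcal{D}^{1/2}\approx V^{1/2}$) the same computation would give roughly $x\ge (D/a)^{1+4/(k-4)}$ — a \emph{stronger} statement than the theorem, not the same one — so the exponent $\tfrac{49}{5k-49}$ cannot "emerge from tracking the error terms" of your linear sieve; it is a fingerprint of the two-dimensional sieve. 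The repair is to replace your second and third steps by the paper's: invoke the $\kappa=2$ beta sieve (dimension hypothesis verified trivially), take $z=V^{10/49}$ and level $R=z^{485/100}$, bound the remainder by $\sum_{\ell<R}|r_\ell|\ll R^{1+\eta}V+(\sqrt{D}/a)R^{3/2+\eta}V^{1/2}+R\ll x^{1-3/392+\eta}/\sqrt{D}$, and then solve $z\ge x^{1/k}$ for $x$, which yields exactly the stated range.
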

\begin{remarks}
~
\begin{enumerate}[(i)]
	\item 	One can formulate this statement in a slightly weaker alternative fashion: for every $\epsilon > 0$ and $x \geq (D/a)^{1+\epsilon}$, the number of integers represented by $f$ with at most $O_{\epsilon}(1)$ prime factors is $\gg \frac{x}{\sqrt{D}(\log x)^2}$. 	
	\item It is unsurprising that $f$ represents many integers with few prime factors as this follows from standard techniques in sieve theory. The form $f$  even represents primes of size $O(D^{700})$ by \cite{ThornerZaman-2017}. However, the key feature of \cref{theorem:almost_primes} is that the size of these integers with few prime factors is very small (in a best possible sense per \cref{subsec:optimality}). 
\end{enumerate}
\end{remarks}

%

%
%
%

%
%
%
\subsection*{Acknowledgements} I would like to thank John Friedlander and Jesse Thorner for their encouragement and many helpful comments on an earlier version of this paper. I am also grateful to Kannan Soundararajan for several insightful conversations and for initially motivating me to pursue this approach. Part of this work was completed with the support of an NSERC Postdoctoral Fellowship. 

%
%
%
\section{Notation and conventions}
For each of the asymptotic inequalities $F \ll G$, $F = O(G)$, or $G \gg F$, we mean there exists a constant $C > 0$ such that $|F| \leq C G$. We henceforth adhere to the convention that all implied constants in all asymptotic inequalities are absolute with respect to all parameters and are effectively computable.  If an implied constant depends on a parameter, such as $\epsilon$, then we use $\ll_{\epsilon}$ and $O_{\epsilon}$ to denote that the implied constant depends at most on $\epsilon$. 

Throughout the paper, 
\begin{itemize}
	\item A form refers to a positive definite binary integral quadratic form. 
	\item $-D$ is the discriminant of a positive definite integral binary quadratic form, so $-D$ is any negative integer $\equiv 0$ or $1 \pmod{4}$. It is not necessarily fundamental. 
	\item 	$f(u,v) = au^2 + buv + cv^2 \in \mathbb{Z}[u,v]$ is a positive definite integral binary quadratic form of discriminant $-D$. It is not necessarily primitive or reduced. 
	\item  $\chi_{-D}( \, \cdot \,) = (\frac{-D}{\, \cdot \,})$ is the Kronecker symbol attached to $-D$. 
	\item $\mathfrak{D}(Q)$ is the set of discriminants $-D$ with $3 \leq D \leq Q$. 
	\item $\Delta$ is a (negative) fundamental discriminant of a form.
	\item $\tau_k$ is the $k$-divisor function and $\tau = \tau_2$ is the divisor function. 
	\item $\varphi$ is the Euler totient function. 
	\item $[s,t]$ is the least common multiple of integers $s$ and $t$. 
	\item $(s,t)$ is the greatest common divisor of integers $s$ and $t$. However, we may abuse notation and sometimes refer to a lattice point $(u,v) \in \Z^2$ but this will be made clear from context (e.g. with the set membership symbol). 
\end{itemize}

%
%
\section{Elementary estimates}
First, we establish a standard result employing the Euler-Maclaurin summation formula \cite[Lemma 4.1]{IwaniecKowalski-2004}, which will later allow us to counts lattice points inside an ellipse.

%
%
\begin{lemma}
	\label{lemma:sqrt_average}
	For $W \geq 1$, 
	\[
	\sum_{1 \leq w \leq W} \sqrt{W^2 - w^2} = \frac{\pi W^2}{4} - \frac{W}{2} + O(\sqrt{W}). 
	\]
\end{lemma}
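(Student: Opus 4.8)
The plan is to apply the Euler--Maclaurin summation formula \cite[Lemma 4.1]{IwaniecKowalski-2004} to the smooth function $g(t) = \sqrt{W^2 - t^2}$ on $[0,W]$ and then extract two main terms, leaving an oscillatory error integral to be bounded. Writing $\{t\}$ for the fractional part, partial summation gives
\[
\sum_{1 \leq w \leq W} g(w) = \int_0^W g(t)\,dt + \int_0^W \{t\}\, g'(t)\,dt - g(W)\{W\} + g(0)\{0\}.
\]
Since $g(W) = 0$ and $\{0\} = 0$, both boundary terms vanish. The first integral is the area of a quarter disc, $\int_0^W \sqrt{W^2 - t^2}\,dt = \tfrac{\pi W^2}{4}$, which produces the leading term.

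Next I would write $\{t\} = \tfrac12 + \psi(t)$, where $\psi(t) = \{t\} - \tfrac12$ is the sawtooth function. The contribution of the constant $\tfrac12$ is $\tfrac12 \int_0^W g'(t)\,dt = \tfrac12\bigl(g(W) - g(0)\bigr) = -\tfrac{W}{2}$, yielding the second main term. It therefore remains to show that the sawtooth integral satisfies $\int_0^W \psi(t)\, g'(t)\,dt = O(\sqrt{W})$.

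This is where the main obstacle lies. Because $g'(t) = -t(W^2 - t^2)^{-1/2}$ blows up as $t \to W$, the total variation $\int_0^W |g'(t)|\,dt = g(0) - g(W) = W$ is too large, so bounding $\psi$ trivially by $\tfrac12$ only delivers the useless estimate $O(W)$. To exploit the cancellation in $\psi$, I would split the integral at $t = W-1$. On $[W-1,W]$ I bound directly, using $|\psi| \le \tfrac12$:
\[
\Bigl| \int_{W-1}^W \psi(t)\, g'(t)\,dt \Bigr| \leq \tfrac12 \bigl( g(W-1) - g(W) \bigr) = \tfrac12 \sqrt{2W-1} = O(\sqrt{W}).
\]
On $[0,W-1]$ I integrate by parts against the bounded periodic antiderivative $\Psi(t) = \int_0^t \psi(s)\,ds = \tfrac12(\{t\}^2 - \{t\})$, which satisfies $|\Psi| \leq \tfrac18$ and vanishes at the integers. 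Since $\Psi(0) = 0$ and $g'(0) = 0$, the lower boundary term drops out; the upper boundary term is $\ll |g'(W-1)| = (W-1)/\sqrt{2W-1} = O(\sqrt{W})$; and the remaining integral is governed by $\int_0^{W-1} |g''(t)|\,dt$.

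Finally I would compute this last integral explicitly. Using $g''(t) = -W^2 (W^2 - t^2)^{-3/2}$ together with the antiderivative $\int (W^2 - t^2)^{-3/2}\,dt = t\, W^{-2}(W^2 - t^2)^{-1/2}$, one finds
\[
\int_0^{W-1} |g''(t)|\,dt = W^2 \cdot \frac{W-1}{W^2 \sqrt{2W-1}} = \frac{W-1}{\sqrt{2W-1}} = O(\sqrt{W}).
\]
Collecting the three $O(\sqrt{W})$ contributions bounds the sawtooth integral and completes the proof. The crux of the argument is precisely the handling of the endpoint singularity of $g'$: isolating a unit interval near $t=W$ and using the bounded antiderivative $\Psi$ elsewhere is what converts the naive $O(W)$ bound into the sharp $O(\sqrt{W})$.
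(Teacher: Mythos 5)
Your proof is correct, and while it opens the same way as the paper's (the identical partial-summation decomposition producing $\tfrac{\pi W^2}{4}$ from the quarter-disc integral and $-\tfrac{W}{2}$ from the boundary/constant term), the treatment of the crucial error integral $\int_0^W \psi(t)\,g'(t)\,dt$ is genuinely different. The paper expands $\psi$ in its truncated Fourier series, evaluates $\int_0^W \sqrt{W^2-t^2}\cos(2\pi n t)\,dt$ exactly in terms of the Bessel function $J_1(2\pi n W)$, and then uses the asymptotic expansion of $J_1$ so that the sum over $n$ converges like $\sum n^{-3/2}$, giving $O(\sqrt{W})$. You instead argue elementarily: isolate the unit interval $[W-1,W]$ where $g'$ is singular and bound it by the total variation $g(W-1)=\sqrt{2W-1}$, and on $[0,W-1]$ integrate by parts against the bounded periodic antiderivative $\Psi(t)=\tfrac12(\{t\}^2-\{t\})$, reducing everything to $\int_0^{W-1}|g''| = (W-1)/\sqrt{2W-1}$, which you compute exactly. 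Both routes land on the same $O(\sqrt{W})$, and your computations (the formula $g''(t)=-W^2(W^2-t^2)^{-3/2}$, the antiderivative, the boundary terms with $\Psi(0)=0$, $g'(0)=0$) all check out. Your argument is more self-contained — no Fourier truncation error, no special functions — and makes transparent that the $\sqrt{W}$ loss comes precisely from the square-root singularity of $g'$ at the endpoint. What the paper's Fourier--Bessel route buys is extensibility: it is the standard entry point to circle-problem-type refinements, where one keeps the oscillating terms $\cos(2\pi nW - 3\pi/4)$ and exploits cancellation in $n$ to improve the exponent, something the elementary second-order Euler--Maclaurin argument cannot do. For the lemma as stated, your proof is a perfectly valid (arguably cleaner) substitute.
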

%

\begin{proof}
	Set $G(w) = \sqrt{W^2-w^2}$. By partial summation, observe that
\begin{equation}
\begin{aligned}
\sum_{1 \leq w \leq W} G(w) 
& = - \int_0^W t G'(t) dt + \int_0^W \psi(t) G'(t) dt - \tfrac{1}{2}G(0),
\end{aligned}
\label{eqn:sqrt_average_step1}
\end{equation}
where $\psi(t) = t-\lfloor t\rfloor-1/2$ is the sawtooth function. 
For the first integral, notice
\begin{equation}
- \int_0^W tG'(t) dt =  \int_0^W \frac{t^2}{\sqrt{W^2-t^2}} dt = \frac{\pi W^2}{4}.  
\label{eqn:sqrt_average_integral1}
\end{equation}
For the second integral, we use the Fourier expansion (see e.g. \cite[Equation (4.18)]{IwaniecKowalski-2004})
\begin{equation*}
\begin{aligned}
\psi(x) 
& = - \sum_{1 \leq n \leq N} (\pi n)^{-1} \sin(2\pi n x) + O( (1+||x||N)^{-1}),
\end{aligned}
\end{equation*}
where $||x||$ is the distance of $x$ to the nearest integer.  It follows that
\begin{equation}
\begin{aligned}
\int_0^W \psi(t) G'(t)dt 
& = 2 \sum_{1 \leq n \leq N} \int_0^W G(t) \cos(2\pi nt) dt + O\Big( \int_0^W \dfrac{|G'(t)|}{1+||t||N} dt\Big) 
\end{aligned}
	\label{integral_psi_dF}
\end{equation}
after integrating by parts. Using a computer algebra package or table of integrals,
\begin{equation}
	\begin{aligned}
		\int_0^W G(t) \cos(2\pi nt) dt 
			& =  \int_0^W \sqrt{W^2-t^2} \cos(2\pi n t) dt 
			  = W \cdot  \frac{J_1(2\pi nW)}{4n},
	\end{aligned}
	\label{integral_Fexp}
\end{equation}
where $J_{\nu}(z)$ is the Bessel function of the first kind. Recall that, for $z > 1 + |\nu|^2$,
\[
J_{\nu}(z) =  \big(\frac{1}{2}z\big)^{\nu} \sum_{m=0}^{\infty} \frac{(-1)^m (\tfrac{1}{4} z^2)^m}{m!\Gamma(m+\nu+1)} = \Big( \frac{2}{\pi z} \Big)^{1/2} \big( \cos(z-\tfrac{1}{2}\nu \pi - \tfrac{1}{4}\pi) + O\big(\frac{1+|\nu|^2}{z}\big) \big).
\]
When summing \eqref{integral_Fexp} over $1 \leq n \leq N$, it follows that
\begin{equation}
 	\begin{aligned}
	2 \sum_{1 \leq n \leq N} \int_0^W G(t) \cos(2\pi nt) dt 
		 & = \frac{W}{4} \cdot 2\sum_{1 \leq n \leq N} \frac{J_1(2\pi nW)}{n}  	\\
		 & = \frac{\sqrt{W}}{2\pi } \sum_{1 \leq n \leq N} \Big( \frac{\cos(2\pi nW - 3\pi/4)}{n^{3/2}} + O\big( \frac{1}{n^{5/2} W} \big) \Big) \\
		 & \ll \sqrt{W}
 	\end{aligned}
 	\label{fourier_F_exp}
\end{equation}
uniformly over $N$. Taking $N \rightarrow \infty$, we conclude from \eqref{integral_psi_dF} and \eqref{fourier_F_exp} that
\begin{equation}
\int_0^W \psi(t) G'(t)dt \ll \sqrt{W}.
\label{eqn:integral_psi_dF_final}
\end{equation}
Combining \eqref{eqn:sqrt_average_step1}, \eqref{eqn:sqrt_average_integral1}, and the above yields the result. 
\end{proof}
%

Next, in \cref{lemma:weighted_Dirichlet_sum}, we calculate some weighted average values of the Dirichlet convolution
\[
(1 \ast \chi)(n) = \sum_{d \mid n} \chi(d)
\]
for any quadratic Dirichlet character $\chi$. Better estimates and certainly simpler proofs are available via Mellin inversion, but it is useful for us to explicitly express the error terms using the character sum quantity
\[
S_{\chi}(t) := \sum_{n \leq t} \chi(n). 
\]
This feature will give us flexibility and streamline the proofs of \cref{theorem:BT_uniform,theorem:BT_average} each of which use different bounds for character sums. One utilizes uniform bounds (conditional and unconditional) whereas the other applies average bounds. 
%
%
\begin{lemma}
	\label{lemma:weighted_Dirichlet_sum}	
	Let $\chi \pmod{D}$ be a quadratic Dirichlet character. For $x \geq 3$, 
	\begin{equation}
			\label{eqn:weighted_Dirichlet_sum}	
			\sum_{n \leq x} (1 \ast \chi)(n) \Big(1-\frac{n}{x}\Big) = \frac{x}{2} L(1,\chi) + O(\cE_0(x; \chi)),
	\end{equation}
	where 
	\[
	\cE_0(x; \chi) :=  \min_{1 \leq y \leq x} \Big(\frac{y^2}{x} + |S_{\chi}(y)| +  x \int_y^{\infty} \frac{|S_{\chi}(t)|}{t^2} dt\Big)   
	\]
	and $S_{\chi}(t) = \sum_{n \leq t} \chi(n)$. Moreover, 
	\begin{equation}
	\begin{aligned}
				\sum_{n \leq x} \frac{(1 \ast \chi)(n)}{n} \Big(1-\frac{n}{x}\Big)
				& = L(1,\chi) (\log x + \gamma-1)  + L'(1,\chi) + O( \cE_1(x; \chi) ), 
		\label{eqn:log_weighted_Dirichlet_sum}
	\end{aligned}	
	\end{equation}
	where
	\[
	\cE_1(x; \chi) := \min_{1 \leq y \leq x} \Big(\frac{y}{x} + \log x \int_y^{\infty} \frac{|S_{\chi}(t)| \log t }{t^2} dt   \Big).
	\]
\end{lemma}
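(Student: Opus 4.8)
The plan is to expand the convolution into a double sum, reduce to summing over the variable carrying the character, and split that sum at a parameter $y\in[1,x]$ which will be optimized to produce the minimum defining $\cE_0$ and $\cE_1$. First I would write $(1\ast\chi)(n)=\sum_{de=n}\chi(d)$, so that the left-hand side of \eqref{eqn:weighted_Dirichlet_sum} becomes $\sum_{de\le x}\chi(d)\big(1-\tfrac{de}{x}\big)$ and the left-hand side of \eqref{eqn:log_weighted_Dirichlet_sum} becomes $\sum_{de\le x}\tfrac{\chi(d)}{de}\big(1-\tfrac{de}{x}\big)$. Each double sum is then split according to $d\le y$ and $d>y$, with $d$ the character-carrying variable.

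For the range $d\le y$, I would evaluate the inner sum over the non-character variable $e$ by elementary summation (writing $\lfloor M\rfloor=M-\psi(M)-\tfrac12$), giving
\[
\sum_{e\le M}\Big(1-\frac{e}{M}\Big)=\frac{M}{2}-\frac12+O\Big(\frac1M\Big),\qquad \sum_{e\le M}\frac1e\Big(1-\frac eM\Big)=\log M+\gamma-1+O\Big(\frac1M\Big),
\]
with $M=x/d$. The main terms produce the truncated Dirichlet series $\tfrac{x}{2}\sum_{d\le y}\tfrac{\chi(d)}{d}$, respectively $(\log x+\gamma-1)\sum_{d\le y}\tfrac{\chi(d)}{d}-\sum_{d\le y}\tfrac{\chi(d)\log d}{d}$, which I complete to $\tfrac x2 L(1,\chi)$ and to $L(1,\chi)(\log x+\gamma-1)+L'(1,\chi)$; writing each completion tail $\sum_{d>y}$ by partial summation against $S_\chi$ contributes a boundary term of size $\tfrac{x}{2y}S_\chi(y)$, respectively $\tfrac{\log x}{y}S_\chi(y)$, together with the integrals appearing in $\cE_0,\cE_1$. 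The $O(1/M)$ errors, summed trivially over $d\le y$, contribute $O(y^2/x)$ and $O(y/x)$, exactly the first terms of $\cE_0$ and $\cE_1$; here no cancellation in $\chi$ is needed.

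For the range $d>y$ I would instead exploit cancellation in $\chi$ via partial summation in $d$ against $S_\chi$, carrying the remaining smooth weight (including the factor $1/d$ in the second identity). The key point is that this produces a term that cancels the dangerous boundary contribution $\tfrac{x}{2y}S_\chi(y)$ (respectively $\tfrac{\log x}{y}S_\chi(y)$) generated by completing the $L$-series, while the surviving remainder is bounded — after interchanging summation and integration and evaluating the resulting geometric-type sum over $e$ — by $x\int_y^\infty \tfrac{|S_\chi(t)|}{t^2}\,dt$, respectively its $\log$-weighted analogue $\log x\int_y^\infty \tfrac{|S_\chi(t)|\log t}{t^2}\,dt$. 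Collecting every contribution and minimizing over $y$ then yields the stated error terms $\cE_0$ and $\cE_1$.

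The main obstacle is the bookkeeping in the range $d>y$: one must organize the large-$d$ double sum so that its leading behaviour precisely cancels the boundary term left over from completing the truncated $L$-series, leaving a remainder controlled exactly by the tail integral rather than by something larger such as $|S_\chi(x)|$. The elementary estimates for $d\le y$ and the completion of the $L$-series are routine, so it is this cancellation, and the verification that the leftover is no larger than the tail integral, that requires care. Treating the large-$d$ part by partial summation (rather than by the elementary expansion used for $d\le y$) is precisely what avoids the awkward $\psi(x/d)^2$ fluctuation that would otherwise arise from the $O(1/M)$ term and resist estimation against $\chi$.
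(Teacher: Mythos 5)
Your proposal is correct and follows essentially the same route as the paper's proof: split the character variable at a parameter $y$, evaluate the inner sums over the smooth variable elementarily (the paper's functions $A(u)$ and $B(u)$ in \eqref{eqn:weighted_partial_sums} and \eqref{eqn:log_weighted_partial_sums}), complete the truncated series to $L(1,\chi)$ and $L'(1,\chi)$, and handle the range $d>y$ by partial summation against $S_\chi$. The one substantive difference is bookkeeping, and there your write-up is the more careful one. The cancellation you single out is real and is genuinely needed for the stated form of $\cE_0$ and $\cE_1$: completing $\sum_{d\le y}\chi(d)/d$ produces a term $+\tfrac{x}{2y}S_\chi(y)$, which is not in general majorized by $\tfrac{y^2}{x}+|S_\chi(y)|+x\int_y^\infty|S_\chi(t)|t^{-2}\,dt$, and it is annihilated exactly by the boundary term $-A(x/y)S_\chi(y)$ arising when $\sum_{y<d\le x}\chi(d)A(x/d)$ is summed by parts. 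The paper never displays this cancellation because it drops both offending terms: its partial-summation identities \eqref{eqn:approximation_L1chi} omit the boundary contributions $S_\chi(y)/y$ and $S_\chi(y)(\log y)/y$, and its integration by parts for $\Sigma_2$ (and for $\tilde{S}_\chi(t)$ in the proof of the second identity) omits the matching boundary term at $t=y$, along with a harmless sign slip in the resulting integral. These omissions compensate one another, so the lemma and its later applications are unaffected; your version simply makes explicit the compensation that the paper's write-up leaves implicit.
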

%

\begin{remark} Bounds for $\cE_0(x; \chi)$ and $\cE_1(x; \chi)$ can be found in \cref{sec:average_bounds,sec:uniform_bounds}	. 
\end{remark}

%
\begin{proof} The proofs of these facts are standard but we include the details for the sake of completeness. Equation \eqref{eqn:weighted_Dirichlet_sum} is a more precise version of \cite[Section 4.3.1, Exercise 3]{MontgomeryVaughan-2007} which originated from work of Mertens. Before we proceed, recall by partial summation that 
	\begin{equation}
	\begin{aligned}
	\sum_{d \leq y} \frac{\chi(d)}{d} 
		& = L(1,\chi) - \int_y^{\infty} \frac{S_{\chi}(t)}{t^2} dt, \\
	\sum_{d \leq y} \frac{\chi(d) \log d}{d}
		& =  -L'(1,\chi) - \int_y^{\infty} \frac{S_{\chi}(t)(\log t-1)}{t^2} dt.
	\label{eqn:approximation_L1chi} 
	\end{aligned}
	\end{equation}
	We will use these estimates in what follows. Let $A(u) = \sum_{n \leq u} (1-\frac{n}{u})$. One can verify that
	\begin{equation}
	A(u) = \frac{u}{2} - \frac{1}{2} \int_0^u \{ t \} dt =  \frac{u}{2} - \frac{1}{2} + O(u^{-1}). 
	\label{eqn:weighted_partial_sums}
	\end{equation}
	From the above integral formula for $A(u)$, it is straightforward to check that $A(u)$ is continuous and if $u > 1$ is not an integer then
	\begin{equation}
	A'(u) = \frac{[u]([u]+1)}{2u^2} \ll 1. 
	\label{eqn:A_derivative}
	\end{equation}
	In particular, $A(u)$ is increasing and absolutely continuous. 
	Now, to calculate the sum in \eqref{eqn:weighted_Dirichlet_sum}, we use Dirichlet's hyperbola method with a parameter $1 \leq y \leq x$. Namely,
	\begin{equation}
	\begin{aligned}
		\Sigma := \sum_{n \leq x} (1 \ast \chi)(n) \Big(1-\frac{n}{x}\Big)
			& = \sum_{d \leq y} \chi(d) A(x/d) + \sum_{y < d \leq x} \chi(d) A(x/d) \\
			& = \Sigma_1 + \Sigma_2,  
	\end{aligned}
	\label{eqn:weighted_partial_sums_Sigma}
	\end{equation}
	say. For $\Sigma_1$, we see by \eqref{eqn:weighted_partial_sums} that
	\begin{equation}
		\Sigma_1 
			= \frac{x}{2} \sum_{d \leq y} \frac{\chi(d)}{d} - \frac{1}{2} \sum_{d \leq y} \chi(d) + O(y^2/x). 
		\label{eqn:weighted_partial_sums_Sigma_1}
	\end{equation}
	For $\Sigma_2$, since $A$ is an absolutely continuous, decreasing, non-negative function and $A(1) = 0$, it follows by partial summation that
	\begin{equation*}
	\begin{aligned}
	\Sigma_2 
		 = \int_y^x A(x/t) dS_{\chi}(t) 
		 = A(1) S_{\chi}(x) + \int_y^x S_{\chi}(t) dA(x/t)	
		& = -\int_y^x S_{\chi}(t) A'(x/t) x t^{-2} dt. 
	\end{aligned}
	\end{equation*}
	Thus, by  \eqref{eqn:A_derivative},  
	\begin{equation*}
	\begin{aligned}
		|\Sigma_2| 
			& \ll x \Big(\int_y^x \frac{|S_{\chi}(t)|}{t^2} dt \Big). 
	\end{aligned}
	\end{equation*}
	Combining the above estimate, \eqref{eqn:weighted_partial_sums_Sigma_1}, and \eqref{eqn:approximation_L1chi} into \eqref{eqn:weighted_partial_sums_Sigma} yields \eqref{eqn:weighted_Dirichlet_sum}.

	We prove \eqref{eqn:log_weighted_Dirichlet_sum} similarly. For $u \geq 1$, define 
	\begin{equation}
	B(u) := \sum_{n \leq u} \frac{1}{n}\big(1-\frac{n}{u}\big) = \log u + \gamma - 1 + O(u^{-1}).
	\label{eqn:log_weighted_partial_sums}
	\end{equation}
	One can verify that $B(u)$ is a non-negative, increasing, and absolutely continuous function of $u$. Also, if $u > 1$ is not an integer then
	\begin{equation}
	B'(u) \ll \frac{1}{u}. 
	\label{eqn:B_derivative}
	\end{equation}
	For some parameter $1 \leq y \leq x$, 
	\begin{equation}
	\begin{aligned}
			\Sigma' := \sum_{n \leq x} \frac{(1 \ast \chi)(n)}{n}\Big(1-\frac{n}{x}\Big)
				& = \sum_{d \leq y} \frac{\chi(d)}{d} B(x/d)  + \sum_{y < d \leq x} \frac{\chi(d)}{d} B(x/d) \\
				& = \Sigma_1' + \Sigma_2',
	\end{aligned}
	\label{eqn:log_weighted_partial_sums_Sigma}
	\end{equation}
	say. To calculate $\Sigma_1'$, we apply \eqref{eqn:log_weighted_partial_sums} and deduce that
	\begin{equation}
	\Sigma_1' = (\log x + \gamma-1) \sum_{d \leq y} \frac{\chi(d)}{d} - \sum_{d \leq y} \frac{\chi(d) \log d}{d} +  O(y/x). 
	\label{eqn:log_weighted_partial_sums_Sigma_1}
	\end{equation}
	For $\Sigma_2'$, set $\tilde{S}_{\chi}(u) := \sum_{y < d \leq u} \frac{\chi(d)}{d}$.  Since $B$ is an absolutely continuous, non-negative, increasing function and $B(1) = 0$, we similarly conclude that
	\begin{equation*}
	\begin{aligned}
			\Sigma_2'
				& = \int_y^x B(x/t) d\tilde{S}_{\chi}(t)  = -x \int_y^x 	\tilde{S}_{\chi}(t) B'(x/t) t^{-2} dt.
	\end{aligned}
	\end{equation*}
	From \eqref{eqn:B_derivative}, it follows that
	\[
	|\Sigma_2'| \ll \int_y^x \frac{|\tilde{S}_{\chi}(t)|}{t} dt.
	\]
	Substituting the identity
	\[
	\tilde{S}_{\chi}(t) = \int_y^t \frac{1}{u} dS_{\chi}(u) = \frac{S_{\chi}(t)}{t} + \int_y^t \frac{S_{\chi}(u)}{u^2} du
	\] 
	into the previous estimate, 	we have that
	\begin{equation*}
	\begin{aligned}
		\Sigma_2' 
			& \ll \int_y^x \frac{|S_{\chi}(t)|}{t^2} dt + \int_y^x \frac{1}{t} \int_y^t \frac{|S_{\chi}(u)|}{u^2} du dt	 \\
			& \ll \int_y^x \frac{|S_{\chi}(t)|}{t^2} dt  + \Big(\int_y^x \frac{1}{t} dt \Big) \Big(\int_y^x \frac{|S_{\chi}(u)|}{u^2} du \Big) \\
			& \ll \log x 	\int_y^x \frac{|S_{\chi}(t)|}{t^2} dt. 
	\end{aligned}
	\label{eqn:log_weighted_partial_sums_Sigma_2}
	\end{equation*}
	Incorporating \eqref{eqn:log_weighted_partial_sums_Sigma_1}, \eqref{eqn:approximation_L1chi}, and the above into \eqref{eqn:log_weighted_partial_sums_Sigma} establishes \eqref{eqn:log_weighted_Dirichlet_sum}. 
\end{proof}
%

%
%
%
\section{Uniform bounds for quadratic characters}
\label{sec:uniform_bounds}
Here we collect known uniform bounds for character sums and values of the logarithmic derivatives of Dirichlet $L$-functions for quadratic characters. We apply the former to obtain estimates for the error terms arising in \cref{lemma:weighted_Dirichlet_sum}. 
%
%
%
\subsection{Character sums}
\label{subsec:character_sums_uniform}
We state the celebrated result of Burgess \cite{Burgess-1963} specialized to quadratic characters  which was extended from cube-free moduli to all moduli by Heath-Brown \cite[Lemma 2.4]{Heath-Brown-1992}. 

%
%
\begin{lemma}
	\label{lemma:Burgess}
	Let $\chi \pmod{D}$ be a quadratic Dirichlet character. For any $\eta > 0, N \geq 1,$ and integer $k \geq 3$,
	\[
	\sum_{n \leq N} \chi(n) \ll_{\eta,k} D^{(k+1)/4k^2+\eta} N^{1-1/k}. 
	\]
\end{lemma}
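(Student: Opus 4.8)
The plan is to reproduce the Burgess amplification method, since the stated bound is precisely the classical Burgess inequality specialized to a quadratic character $\chi \pmod D$, and then to indicate how Heath-Brown's argument removes the cube-free restriction on the modulus. Writing $S = \sum_{M < n \le M+N}\chi(n)$ for a general incomplete sum, the starting point is the shifting identity: for a prime $p \nmid D$ and an integer $\ell$ one has $p(\overline{p}n + \ell) \equiv n + p\ell \pmod D$, hence $\chi(n + p\ell) = \chi(p)\chi(\overline{p}n + \ell)$. Summing over $n$ in the interval and comparing $(M,M+N]$ with its translate, whose symmetric difference has measure $O(p\ell)$, gives
\[
S = \chi(p)\sum_{M<n\le M+N}\chi(\overline{p}n + \ell) + O(p\ell).
\]

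Next I would average this identity over primes $p \in (P,2P]$ with $p \nmid D$ and integers $1 \le \ell \le L$, where $P$ and $L$ are parameters chosen with $PL$ small relative to $N$. Taking absolute values and using $|\chi(p)| \le 1$, the error terms contribute $O(PL)$ on average, while the main term becomes an average of sums $\sum_n \chi(\overline p n + \ell)$. Grouping the values $\overline p n \bmod D$ into residue classes and applying H\"older's inequality in the $p$ and $\ell$ variables (raising to the $2k$-th power) reduces the estimate to controlling the complete moment
\[
V := \sum_{v \bmod D}\Big|\sum_{1 \le \ell \le L}\chi(v+\ell)\Big|^{2k},
\]
together with a bound for the number of pairs with $\overline p n \equiv v$, which is handled by an elementary divisor-counting argument.

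The heart of the proof is the estimate for $V$. Expanding the $2k$-th power produces complete character sums of the form $\sum_{v \bmod D}\chi\big((v+\ell_1)\cdots(v+\ell_{2k})\big)$. For a quadratic $\chi$ these are governed by the Riemann Hypothesis for the hyperelliptic curve $y^2 = \prod_i(v+\ell_i)$ over the residue field; Weil's bound gives $O(k\sqrt D)$ whenever the polynomial $\prod_i(v+\ell_i)$ is not a perfect square, i.e. off the diagonal, while the diagonal terms --- where the $2k$ shifts pair off --- number $O(L^k)$ and contribute $O(D L^k)$. Thus $V \ll D L^k + L^{2k}\sqrt D$, and feeding this back through the H\"older step and optimizing $P$ and $L$ so as to balance the shift error $PL$ against the amplified main term yields $S \ll_{\eta,k} N^{1-1/k}D^{(k+1)/4k^2 + \eta}$.

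The main obstacle, and the genuinely deep ingredient, is the Weil bound powering the diagonal/off-diagonal dichotomy for $V$; everything else is combinatorial bookkeeping and optimization. A second difficulty is that Weil's estimate is naturally an assertion over a prime field, so passing to a general (possibly non-cube-free) modulus $D$ requires Heath-Brown's refinement: one factors $D$ and uses multiplicativity of $\chi$ together with the Chinese Remainder Theorem to localize at each prime power, the prime-power components being the source of the restriction to integers $k \ge 3$. Since this is entirely classical, in practice I would simply invoke \cite{Burgess-1963, Heath-Brown-1992}, but the sketch above records the structure one would fill in.
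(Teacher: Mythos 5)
The paper gives no proof of this lemma at all: it simply cites Burgess's theorem together with Heath-Brown's extension to quadratic characters of arbitrary modulus, which is exactly what you do after your (essentially accurate) sketch of the amplification method, so your proposal matches the paper's approach. One quibble with the sketch: Heath-Brown's extension does not proceed by CRT localization at prime powers; rather, one reduces to the primitive character inducing $\chi$ --- whose conductor, being essentially a fundamental discriminant, is cube-free up to a bounded factor, so Burgess's cube-free case applies --- and then removes the imprimitivity by a M\"obius sum over divisors of $D$; since you ultimately invoke \cite{Burgess-1963, Heath-Brown-1992} anyway, this inaccuracy does not affect correctness.
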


We also record the well known GRH-conditional estimate for character sums. 

%
%
\begin{lemma}
	\label{lemma:GRH}
	Let $\chi \pmod{D}$ be a non-principal Dirichlet character and suppose $L(s,\chi)$ satisfies GRH. For any $\eta > 0$ and $N \geq 1,$
	\[
	\sum_{n \leq N} \chi(n) \ll_{\eta} D^{\eta} N^{1/2}. 
	\]
\end{lemma}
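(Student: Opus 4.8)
The plan is to split the argument according to the size of $N$ relative to the modulus $D$, handling long sums by completion and short sums by contour integration against $L(s,\chi)$. In broad strokes, Polya--Vinogradov already suffices once $N$ is as large as $D$, so GRH is only genuinely needed to control short character sums.

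First, for $N \geq D$, I would not invoke GRH at all: the Polya--Vinogradov inequality gives $|\sum_{n\le N}\chi(n)| \ll \sqrt{D}\,\log D$ for any non-principal $\chi \pmod{D}$. Since $\sqrt{D} \le \sqrt{N}$ in this range and $\log D \ll_{\eta} D^{\eta}$, this is already $\ll_{\eta} D^{\eta} N^{1/2}$, as desired.

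Next, for $1 \le N < D$, I would pass to the integral representation. Because $\chi$ is non-principal, $L(s,\chi)$ is entire, so no residues arise in the contour shift. Starting from a truncated Perron formula on the line $\Re(s) = 1 + 1/\log N$ with truncation height $T$, the standard truncation error is $\ll (N\log N)/T$ up to terms of size $O(\log N)$; taking $T = N$ renders this $\ll \log N$, which is harmless. I would then shift the contour to the critical line $\Re(s) = 1/2$, crossing no poles, and bound both the horizontal connectors at height $\pm T$ and the vertical line $\Re(s)=1/2$ using the GRH (Lindel\"of) bound $|L(s,\chi)| \ll_{\eta} (D(|t|+2))^{\eta}$ valid for $\Re(s) \ge 1/2$. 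The vertical integral is then
\[
\ll_{\eta}\; N^{1/2}\int_{-T}^{T}\frac{(D(|t|+2))^{\eta}}{|\tfrac12+it|}\,dt \;\ll_{\eta}\; N^{1/2}(DT)^{\eta} \;=\; N^{1/2}(DN)^{\eta},
\]
while the horizontal pieces turn out to be smaller after the same choice $T=N$.

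The one subtlety worth flagging is that the crude pointwise bound forces an extra factor of $N^{\eta}$ through the vertical integral, since $\int^{T}(|t|+2)^{\eta-1}\,dt$ contributes $T^{\eta}$. This is precisely why the case split is convenient: in the range $N < D$ one has $N^{\eta} < D^{\eta}$, so the offending factor is absorbed and, after relabeling $\eta$, the contour estimate reads $\ll_{\eta} D^{\eta} N^{1/2}$; meanwhile the complementary range $N \ge D$ has already been dispatched unconditionally by Polya--Vinogradov. (One could instead eliminate the split by substituting a mean-square bound for $|L(\tfrac12+it,\chi)|$ under GRH in place of the pointwise Lindel\"of bound, but the split is cleaner and entirely sufficient here.) The main obstacle is thus not conceptual but bookkeeping: ensuring the Perron truncation error, the horizontal connectors, and the vertical integral are all balanced at $T=N$ so that each is dominated by $N^{1/2}D^{\eta}$.
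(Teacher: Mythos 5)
The paper offers no proof of this lemma at all---it simply records it as a well-known consequence of GRH---so there is nothing internal to compare against; what matters is whether your argument is sound, and it is. Your two-range split is the standard derivation: Polya--Vinogradov handles $N \ge D$ unconditionally, and for $N < D$ the truncated Perron formula at $\Re(s) = 1 + 1/\log N$ with $T = N$, followed by a shift to $\Re(s) = 1/2$ (no poles are crossed, since $L(s,\chi)$ is entire for non-principal $\chi$ and $s=0$ lies outside the rectangle), gives a vertical contribution $\ll_\eta N^{1/2}(DN)^{\eta} \le N^{1/2}D^{2\eta}$, with Perron truncation error $\ll \log N$ and horizontal edges $\ll_\eta (DN)^{\eta}$; relabelling $\eta$ finishes, and your observation that $N^{\eta} \le D^{\eta}$ in this range is exactly the right bookkeeping. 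Two points deserve to be made explicit. First, the engine of your proof is the implication that GRH yields the Lindel\"of-type bound $|L(s,\chi)| \ll_\eta (D(|t|+2))^{\eta}$ for $\Re(s) \ge 1/2$ \emph{uniformly in the conductor}; this is standard but not trivial, so your argument is self-contained only modulo that citation (which is fair, given that the paper itself cites the lemma as known). Second, since $\chi$ is assumed non-principal but not primitive, both GRH and the Lindel\"of bound should be transferred from the inducing primitive character $\chi^*$ via $L(s,\chi) = L(s,\chi^*)\prod_{p \mid D}\bigl(1 - \chi^*(p)p^{-s}\bigr)$, where the finite Euler product is $\ll_\eta D^{\eta}$ on $\Re(s) \ge 1/2$ (and likewise Polya--Vinogradov must be quoted in its version for imprimitive non-principal characters); this is a one-line remark, but without it the pointwise bounds you invoke are, strictly speaking, statements about primitive characters.
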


%
%
%
\subsection{Errors from \cref{lemma:weighted_Dirichlet_sum}}
The results of \cref{subsec:character_sums_uniform} allow us to obtain power-saving estimates for the error terms in \cref{lemma:weighted_Dirichlet_sum} for small values of $x$ relative to the conductor $D$. 
%
%
%
\begin{lemma}
	\label{lemma:uniform_bounds}
	Let $\chi \pmod{D}$ be a quadratic Dirichlet character and let $0 < \epsilon < \tfrac{1}{20}$ be arbitrary. Let $\cE_0(x;\chi)$ and $\cE_1(x;\chi)$ be as in \cref{lemma:weighted_Dirichlet_sum}.   If $x \gg_{\epsilon} D^{1/4+\epsilon}$ then 
	\begin{equation}
	\cE_0(x; \chi) \ll_{\epsilon} x^{1-\epsilon^2/2} \qquad \cE_1(x; \chi) \ll_{\epsilon} x^{-\epsilon^2/2}. 
	\label{eqn:uniform_bounds_unconditional}
	\end{equation}
	If $L(s,\chi)$ satisfies GRH and $x \gg_{\epsilon} D^{\epsilon}$ then
	\begin{equation}
		\cE_0(x; \chi) \ll_{\epsilon} x^{2/3+\epsilon} \quad \text{and} \quad \cE_1(x;\chi) \ll_{\epsilon} x^{-1/3+\epsilon}. 
		\label{eqn:uniform_bounds_GRH}
	\end{equation}
\end{lemma}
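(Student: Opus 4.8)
The goal is to bound the two minimization quantities $\cE_0(x;\chi)$ and $\cE_1(x;\chi)$ from \cref{lemma:weighted_Dirichlet_sum} by feeding in the character sum bounds from \cref{subsec:character_sums_uniform} and making a good choice of the free parameter $y$. Since each error is a minimum over $1 \le y \le x$, it suffices to exhibit a \emph{single} convenient choice of $y$ and estimate the resulting expression. Let me sketch the plan.

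\medskip

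\noindent\textbf{The plan.} I would treat the unconditional and GRH cases in parallel, the only difference being the character sum bound plugged in. Recall
\[
\cE_0(x;\chi) = \min_{1 \le y \le x} \Big( \frac{y^2}{x} + |S_\chi(y)| + x \int_y^\infty \frac{|S_\chi(t)|}{t^2}\, dt \Big),
\]
and similarly for $\cE_1$. The strategy is: write $|S_\chi(t)| \ll D^{\alpha} t^{\beta}$ for the relevant bound (with $\beta < 1$, so the tail integrals converge), substitute into each of the three terms, and then optimize $y$ to balance the polynomially-growing $y^2/x$ term against the polynomially-decaying tail contributions. First I would record that under the Burgess bound (\cref{lemma:Burgess}) one has $|S_\chi(t)| \ll_{\eta,k} D^{(k+1)/4k^2 + \eta} t^{1 - 1/k}$, while under GRH (\cref{lemma:GRH}) one has the cleaner $|S_\chi(t)| \ll_\eta D^\eta t^{1/2}$.

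\medskip

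\noindent\textbf{The GRH case.} This is the easier one, so I would do it first to fix the method. With $|S_\chi(t)| \ll D^\eta t^{1/2}$, the tail integral is $x \int_y^\infty D^\eta t^{1/2} t^{-2}\, dt \ll D^\eta x y^{-1/2}$, the middle term is $D^\eta y^{1/2}$, and the first term is $y^2/x$. Balancing $y^2/x$ against $x y^{-1/2}$ suggests $y \asymp x^{4/5}$, but one checks that the cleaner choice $y = x^{2/3}$ already gives $y^2/x = x^{1/3}$ and $x y^{-1/2} = x^{2/3}$, so the tail dominates and $\cE_0 \ll D^\eta x^{2/3} \ll_\epsilon x^{2/3 + \epsilon}$ once $x \gg_\epsilon D^\epsilon$ (here I absorb $D^\eta$ into $x^\epsilon$ by taking $\eta$ small relative to $\epsilon$ and using the hypothesis on $x$). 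For $\cE_1$, the analogous computation with the extra factor of $\log t$ and the overall $\log x$ gives tail $\ll D^\eta (\log x)^2 x^{-1/2} y^{-1/2}$-type terms; choosing $y = x^{2/3}$ again yields $\cE_1 \ll_\epsilon x^{-1/3+\epsilon}$, the logarithms being swallowed by $x^\epsilon$.

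\medskip

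\noindent\textbf{The unconditional case.} Now with Burgess, $|S_\chi(t)| \ll_{\eta,k} D^{c_k + \eta} t^{1-1/k}$ where $c_k = (k+1)/4k^2$, and the tail integral $x\int_y^\infty t^{-1-1/k}\,dt \ll_k D^{c_k+\eta} x\, k\, y^{-1/k}$. The three terms are $y^2/x$, $D^{c_k+\eta} y^{1-1/k}$, and $D^{c_k+\eta} x y^{-1/k}$. The natural move is to take $y$ a power of $x$, say $y = x^\sigma$, and then \emph{choose $k$ large} so that $c_k$ is tiny; since $c_k = (k+1)/4k^2 \to 0$, for any fixed $\epsilon$ I can fix $k = k(\epsilon)$ with $c_k < \epsilon^2$ (roughly $k \asymp 1/\epsilon^2$), which is exactly the sort of dependence that produces the stated $x^{1 - \epsilon^2/2}$ and $x^{-\epsilon^2/2}$ savings. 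Using $x \gg_\epsilon D^{1/4+\epsilon}$ converts the $D^{c_k+\eta}$ factor into a small power of $x$: since $D \ll x^{1/(1/4+\epsilon)} = x^{4/(1+4\epsilon)}$, we get $D^{c_k+\eta} \ll x^{4(c_k+\eta)/(1+4\epsilon)}$, which can be made $\ll x^{\epsilon^2/2}$ by the choice of $k$ and $\eta$. The main obstacle is purely bookkeeping: one must verify that a single choice of $y = x^\sigma$ (I expect something in the range $\sigma$ close to $1$, e.g. chosen so that $y^2/x \ll x^{1-\epsilon^2/2}$ forces $\sigma \le 1 - \epsilon^2/4$, while still keeping the tail term under control) simultaneously makes all three terms $\ll_\epsilon x^{1-\epsilon^2/2}$ for $\cE_0$ and $\ll_\epsilon x^{-\epsilon^2/2}$ for $\cE_1$, after the $D$-power has been absorbed. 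I expect the delicate point to be tracking how the admissible exponent degrades as $\epsilon \to 0$ (hence the $\epsilon^2$ rather than $\epsilon$ savings) and confirming the constraint $0 < \epsilon < \tfrac1{20}$ is what guarantees the relevant exponents stay in the valid range; the character-sum inputs and the elementary integral estimates themselves are routine.
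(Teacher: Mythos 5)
Your GRH case is correct and is essentially the paper's own argument: substitute \cref{lemma:GRH} into the three terms defining $\cE_0$ and $\cE_1$, take $y$ a fixed power of $x$ (the paper uses $y = D^{\epsilon^2/2}x^{5/6}$ with a slightly coarser tail bound; your $y = x^{2/3}$ works equally well), and absorb $D^{\eta}$ into $x^{\epsilon}$ using $x \gg_{\epsilon} D^{\epsilon}$. One small slip: the $\cE_1$ tail is $\ll D^{\eta}(\log x)(\log y)\,y^{-1/2}$, with no factor $x^{-1/2}$; since with $y = x^{2/3}$ this is still $\ll_{\epsilon} x^{-1/3+\epsilon}$, your conclusion is unaffected.

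The unconditional case, however, has a genuine gap: the prescription ``take $k \asymp 1/\epsilon^2$ so that $c_k = \frac{k+1}{4k^2} < \epsilon^2$ and $D^{c_k+\eta} \ll x^{\epsilon^2/2}$'' is self-defeating, because the tail term then cannot be controlled by \emph{any} admissible $y$. Concretely, write $y = x^{\sigma}$ with $0 < \sigma \le 1$; the tail in $\cE_0$ is $\ll_k D^{c_k+\eta}\, x\, y^{-1/k} = D^{c_k+\eta}\, x^{1-\sigma/k}$. The lemma must hold down to $x \asymp_{\epsilon} D^{1/4+\epsilon}$, and there your absorption requirement $D^{c_k+\eta} \ll x^{\epsilon^2/2}$ forces $c_k + \eta \le \tfrac{1}{8}\epsilon^2(1+4\epsilon)$, hence (as $c_k > \tfrac{1}{4k}$ and $\epsilon < \tfrac{1}{20}$) $k > \tfrac{2}{\epsilon^2(1+4\epsilon)} > \tfrac{5}{3\epsilon^2}$. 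But then making the tail $\ll x^{1+\epsilon^2/2-\sigma/k} \le x^{1-\epsilon^2/2}$ needs $\sigma \ge k\epsilon^2 > \tfrac{5}{3}$, contradicting $\sigma \le 1$ (let alone $\sigma \le 1 - \epsilon^2/4$, which the term $y^2/x$ demands). Even dropping the exact absorption, any $k \gtrsim 1/\epsilon^2$ fails for small $\epsilon$: the tail constraint reads $\sigma \ge \tfrac{k\epsilon^2}{2} + \tfrac{1}{1+4\epsilon} + O(k\eta)$, whose right side exceeds $1$ once $\epsilon \le \tfrac{1}{32}$. The point you are missing is that the $\epsilon^2$-saving does not come from making the Burgess factor itself smaller than $x^{\epsilon^2/2}$; it comes from the cancellation of the $\tfrac14$ in $c_k \approx \tfrac{1}{4k}$ against the $\tfrac14$ in the hypothesis $x \gg_{\epsilon} D^{1/4+\epsilon}$. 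Converting $D^{c_k+\eta}$ into a power of $x$ costs an exponent $\approx \tfrac{1}{k(1+4\epsilon)}$, the optimized $y$ gains an exponent $\asymp \tfrac{1}{k}$, and the net saving is the difference, $\asymp \tfrac{\epsilon}{k}$, which is of size $\epsilon^2$ precisely when $k \asymp 1/\epsilon$. This is the paper's choice: $k = \lceil 1/\epsilon \rceil$, $\eta = \tfrac{1}{8k^2}$, and $y = D^{(c_k+\eta)/3}x^{1-1/3k}$ (balancing $y^2/x$ against the tail) give $\cE_0 \ll D^{2(c_k+\eta)/3}x^{1-2/3k} \ll_{\epsilon} x^{1-\epsilon^2/2}$, and $y = D^{(c_k+\eta)/2}x^{1-1/2k}$ handles $\cE_1$ analogously. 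With your $k \asymp 1/\epsilon^2$ the best attainable saving is only $\asymp \epsilon^3$, so the claimed exponents are out of reach by your route.
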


\begin{proof} First, we consider $\cE_0 = \cE_0(x; \chi)$. Let $1 \leq y \leq x$ be a parameter yet to be chosen. From  \cref{lemma:Burgess} and the definition of $\cE_0$, we have that
	\[
	\mathcal{E}_0 \ll_{\eta,k}  x^{-1} y^2  +  D^{\frac{k+1}{4k^2} + \eta} x^{2-1/k} y^{-1}. 
	\]
	Selecting
	\[
	y = D^{\frac{k+1}{12k^2} + \frac{\eta}{3}} x^{1-1/3k}, \qquad \eta = \frac{1}{8k^2}, \qquad k = \lceil 1/\epsilon \rceil \geq 20
	\]
	yields the estimate for $\cE_0$ in \eqref{eqn:uniform_bounds_unconditional} since $x \gg_{\epsilon} D^{1/4+\epsilon}$.  Now, assume GRH holds for $L(s,\chi)$ and $x \gg_{\epsilon} D^{\epsilon}$.  Utilize \cref{lemma:GRH} with $\eta = 3\epsilon^2/2$ and select $y = D^{\epsilon^2/2}x^{5/6}$. This implies that
	\[
	\mathcal{E}_0 \ll_{\epsilon}  \frac{y^2}{x} + \frac{D^{3\epsilon^2/2}  x^{3/2}}{y} \ll_{\epsilon} D^{\epsilon^2} x^{2/3} \ll_{\epsilon} x^{2/3 + \epsilon}
	\]
	as desired. The arguments for $\cE_1 = \cE_1(x;\chi)$ are similar. Again, by \cref{lemma:Burgess}, 
	\[
	\cE_1 \ll_{k,\eta} \frac{y}{x} + \frac{D^{\frac{k+1}{4k^2} + \eta} x^{1-1/k} (\log x)^2}{y}.	
	\]
	Selecting
	\[
	y = D^{\frac{k+1}{8k^2} + \frac{\eta}{2}} x^{1- 1/2k} , \qquad \eta = \frac{1}{8k^2}, \qquad k = \lceil 1/\epsilon \rceil \geq 20 
	\]
	yields the estimate for $\cE_1$ in \eqref{eqn:uniform_bounds_unconditional} since $x \gg_{\epsilon} D^{1/4+\epsilon}$. If $L(s,\chi)$ satisfies GRH then we use \cref{lemma:GRH} with $\eta = \epsilon^2$ and select $y = D^{\epsilon^2} x^{2/3}$. This implies that
	\[
	\cE_1 \ll_{\epsilon}  \frac{y}{x} + D^{3\epsilon^2/2} y^{-1/2} (\log x)^2 \ll_{\epsilon} D^{\epsilon^2/2} x^{-1/3} (\log x)^2 \ll_{\epsilon} x^{-1/3 + \epsilon}
	\]
	as desired. 
\end{proof}

%
%
%
\subsection{Logarithmic derivatives}

We record two results related to the value of the logarithmic derivative of $L(s,\chi)$ at $s=1$ for a quadratic Dirichlet character $\chi$. 

%
%
%
\begin{lemma}[Heath-Brown]
If $\chi \pmod{D}$ is a quadratic character  then, for $\eta > 0$,
\[
-\frac{L'}{L}(1,\chi) \leq (\frac{1}{8} + \eta) \log D + O_{\eta}(1).
\] 
\label{lemma:log_derivative_unconditional}	
\end{lemma}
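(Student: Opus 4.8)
The plan is to reduce to a primitive character and then bound $-\frac{L'}{L}(1,\chi)$ from above by exploiting that $L(s,\chi)$ is small inside the critical strip. First I would pass to the primitive character $\chi^*$ of conductor $D^*\mid D$ that induces $\chi$. Comparing Euler factors gives
\[
-\frac{L'}{L}(1,\chi) = -\frac{L'}{L}(1,\chi^*) - \sum_{p\mid D}\frac{\chi^*(p)\log p}{p-\chi^*(p)},
\]
and the correction is $\ll\sum_{p\mid D}\frac{\log p}{p-1}\ll\log\log D\le \frac{\eta}{2}\log D + O_\eta(1)$; since $\log D^*\le\log D$ it suffices to treat primitive $\chi$. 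For primitive $\chi$ the baseline is the Hadamard expansion of the logarithmic derivative, which at $s=1$ (taking real parts) reads
\[
-\frac{L'}{L}(1,\chi) = \tfrac12\log\tfrac{D}{\pi} + O(1) - \sum_{\rho}\mathrm{Re}\,\frac{1}{1-\rho},
\]
the sum over nontrivial zeros $\rho=\beta+i\gamma$. Since $\beta\le 1$ each summand is nonnegative, so merely discarding the sum gives the soft constant $\tfrac12$. Improving $\tfrac12$ to $\tfrac18$ is the entire content, and it requires feeding in the analytic smallness of $L$ rather than counting zeros directly.

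To do this I would replace the zero sum by a boundary integral via the Poisson--Jensen (Borel--Carath\'eodory) formula applied to $L(\cdot,\chi)$ on the disk $|s-1|\le R$. Differentiating at the centre yields
\[
\frac{L'}{L}(1,\chi) = \sum_{|\rho-1|<R}\Big(\mathrm{Re}\,\frac{1}{1-\rho} - \frac{1-\beta}{R^2}\Big) + \frac{1}{\pi R}\int_0^{2\pi}\cos\phi\,\log\big|L(1+Re^{i\phi},\chi)\big|\,d\phi,
\]
and for zeros with $|\rho-1|<R$ each bracketed term is nonnegative. Dropping them (they only help the desired upper bound) leaves
\[
-\frac{L'}{L}(1,\chi) \le \frac{1}{\pi R}\int_0^{2\pi}(-\cos\phi)\,\log\big|L(1+Re^{i\phi},\chi)\big|\,d\phi.
\]
On the right half of the circle $\log|L|=O(\log\log D)$ contributes negligibly; on the left half, where $\sigma=1+R\cos\phi<1$, I would insert a subconvex bound for $\log|L(\sigma+it,\chi)|$. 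Choosing $R=\tfrac12$ so the circle just reaches the critical line, the Burgess estimate of \cref{lemma:Burgess}, passed through the approximate functional equation and partial summation, controls $|L(\tfrac12+it,\chi)|$, and interpolation with the trivial bound on $\mathrm{Re}(s)=1$ handles the whole left arc.

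The main obstacle is the precise constant. Carrying the integral out with the $\cos^2\phi$ weight shows that the resulting constant is, up to $\epsilon$, the subconvexity exponent for $L(\tfrac12+it,\chi)$ on the critical line: the convexity bound $|L(\tfrac12+it,\chi)|\ll D^{1/4+\epsilon}$ gives only $\tfrac14$, and the Burgess exponent lowers this but still stops short of $\tfrac18$. Squeezing down to the stated $\tfrac18$ is exactly where the quadratic (real) nature of $\chi$ must be used in force, through Burgess-type and mean-value estimates for real character sums, and extracting this sharp constant rather than the soft $\tfrac14$ is the crux; here I would follow Heath-Brown's treatment. Finally I would remark that a possible Siegel zero $\beta_0$ near $1$ is not an obstruction: it contributes $+\frac{1}{1-\beta_0}$ to $\sum_\rho\mathrm{Re}\frac{1}{1-\rho}$, i.e.\ it only drives $-\frac{L'}{L}(1,\chi)$ more negative, so the upper bound is safe and the argument never needs a zero-free region around $s=1$.
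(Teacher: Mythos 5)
Your complex-analytic machinery is essentially sound: the reduction to a primitive character, the differentiated Poisson--Jensen formula at the centre $s=1$, and the observation that the zero terms carry a favourable sign (so that no zero-free region and no lower bound on $|L(1,\chi)|$ is ever needed) are all correct. Note also that the paper itself offers no self-contained proof of this lemma; it simply cites the proof of Heath-Brown's Lemma 3.1 (via Thorner--Zaman, Proposition 2.6). But judged as a proof, your proposal has a genuine gap, and it sits exactly where you flag it. With your choice $R=\tfrac12$ the method \emph{cannot} reach $\tfrac18$: interpolating the Burgess critical-line bound $|L(\tfrac12+it,\chi)|\ll_{\eta} D^{3/16+\eta}$ against $|L(1+it,\chi)|\ll\log D$ gives $\log|L(\sigma+it,\chi)|\le\bigl(\tfrac38+\eta\bigr)(1-\sigma)\log D+O_{\eta}(1)$ on the left semicircle, and since $\frac{1}{\pi}\int_{\pi/2}^{3\pi/2}\cos^{2}\phi\,d\phi=\tfrac12$, your boundary integral evaluates to $\bigl(\tfrac{3}{16}+O(\eta)\bigr)\log D$. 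So the plan as written proves the lemma with $\tfrac{3}{16}$ in place of $\tfrac18$, and deferring the remaining improvement to ``Heath-Brown's treatment'' is circular: that improvement \emph{is} the lemma.

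The missing idea is that you should never push the contour to the critical line. What controls the constant is the slope of the exponent bound for $\log|L|$ at $\sigma=1^{-}$, not its value at $\sigma=\tfrac12$; Burgess makes this slope $\tfrac14+\eta$ even though the critical-line exponent is only $\tfrac{3}{16}$. Concretely, for $\sigma>1-1/k$ and $|t|\le1$, partial summation with \cref{lemma:Burgess} lets you truncate $L(s,\chi)=\sum_{n\le M}\chi(n)n^{-s}+(\text{tail})$ at $M=D^{1/4+\eta}$, the tail being $O_{\eta,k}(M^{1-\sigma})$ once $k\ge 1/(4\eta)$; hence $\log|L(s,\chi)|\le\bigl(\tfrac14+\eta\bigr)(1-\sigma)\log D+O_{\eta}(\log\log D)$ throughout a fixed small neighbourhood of $s=1$. (This truncation is impossible at $\sigma=\tfrac12$, which is precisely why the critical line is the wrong place to work: the exponent curve is not linear, and its value $\tfrac{3}{16}$ at $\sigma=\tfrac12$ is worse than the slope $\tfrac14$ would suggest.) Now run your own Poisson--Jensen identity on a circle of small radius $R=R(\eta)$: the $\cos^{2}\phi$ average contributes the factor $\tfrac12$, giving $-\frac{L'}{L}(1,\chi)\le\bigl(\tfrac18+\tfrac{\eta}{2}\bigr)\log D+O_{\eta}(\log\log D)\le\bigl(\tfrac18+\eta\bigr)\log D+O_{\eta}(1)$, as stated. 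One smaller correction: on the right semicircle the weight $-\cos\phi$ is negative, so you need a \emph{lower} bound for $\log|L|$ there, not the upper bound $O(\log\log D)$ you quote; the elementary bound $|L(s,\chi)|\ge\zeta(2\sigma)/\zeta(\sigma)\gg\sigma-1$ for $\sigma>1$ shows that arc contributes $O_{R}(1)$. Relatedly, your remark that the achievable constant ``is the subconvexity exponent on the critical line'' is exactly the misconception that blocks the proof.
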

%
\begin{proof}
	This follows from the proof of \cite[Lemma 3.1]{Heath-Brown-1992}  with some minor modifications to allow for any modulus $D$ (not just sufficiently large) and a slightly wider range of the quantity $\sigma$ therein, say $1 < \sigma < 1 + \epsilon$. See \cite[Proposition 2.6]{ThornerZaman-2017} for details.   
\end{proof}
%

%
%
%

\begin{lemma} 
If $\chi \pmod{D}$ is a quadratic character and $L(s,\chi)$ satisfies GRH then for $\eta > 0$
\[
-\frac{L'}{L}(1,\chi) \ll \log\log D \leq \eta \log D + O_{\eta}(1).
\]
\label{lemma:log_derivative_GRH}	
\end{lemma}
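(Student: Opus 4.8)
The plan is to express the logarithmic derivative as a Mellin-type integral of the $\chi$-twisted Chebyshev function and then estimate that integral by playing the GRH bound against the trivial Chebyshev bound. Write $\psi(t,\chi) = \sum_{n \le t} \Lambda(n)\chi(n)$. Since GRH for $L(s,\chi)$ depends only on the primitive character $\chi^*$ inducing $\chi$, and since stripping off the finitely many Euler factors at primes $p \mid D$ alters $-L'/L(1,\chi)$ by $\sum_{p\mid D}\frac{\chi^*(p)\log p}{p-\chi^*(p)}$, which is $\ll \sum_{p \mid D}\frac{\log p}{p} \ll \log\log D$ by Mertens (the sum is largest when $D$ is a product of the smallest primes), I may assume $\chi$ is primitive and absorb this correction.

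First I would record the two inputs for $\psi(t,\chi)$. Trivially $|\psi(t,\chi)| \le \psi(t) \ll t$ by Chebyshev, using only $|\chi(n)| \le 1$ and no cancellation. Under GRH, the classical explicit-formula estimate gives $\psi(t,\chi) \ll \sqrt{t}\,(\log Dt)^2$, coming from $\psi(t,\chi) = -\sum_{\rho}\frac{t^\rho}{\rho}+\cdots$ together with $\Re\rho = \tfrac12$ and the zero count $N(T,\chi)\ll T\log(DT)$. Next I would establish the representation
\[
-\frac{L'}{L}(1,\chi) = \int_1^\infty \frac{\psi(t,\chi)}{t^2}\,dt.
\]
Indeed, for $\Re s > 1$ one has $-L'/L(s,\chi) = \sum_n \Lambda(n)\chi(n)n^{-s}$, and partial summation rewrites the partial sums as $\frac{\psi(x,\chi)}{x^s} + s\int_1^x \psi(t,\chi)t^{-s-1}\,dt$. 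The GRH bound makes the boundary term vanish as $x\to\infty$ and makes the integral converge locally uniformly for $\sigma > \tfrac12$, so the resulting analytic function agrees with $-L'/L(s,\chi)$ up to $s=1$, where $L(1,\chi)\neq 0$ under GRH.

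Then I would split the integral at $T = (\log D)^4$. On the head $1 \le t \le T$ the trivial bound gives $\int_1^T \frac{|\psi(t,\chi)|}{t^2}\,dt \ll \int_1^T \frac{dt}{t} = \log T \ll \log\log D$, while on the tail $t \ge T$ the GRH bound gives $\int_T^\infty \frac{\sqrt t\,(\log Dt)^2}{t^2}\,dt \ll \frac{(\log DT)^2}{\sqrt T} \ll \frac{(\log D)^2}{(\log D)^2} = O(1)$. Adding these yields $-L'/L(1,\chi) \ll \log\log D$. The second inequality of the statement is then immediate, since $\log\log D = o(\log D)$ forces $\log\log D \le \eta\log D + O_\eta(1)$ for every fixed $\eta > 0$.

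The crux, and the only place any care is needed, is the balancing of the two bounds for $\psi(t,\chi)$. Applying the GRH estimate over the whole range $[1,\infty)$ would produce $\int_1^\infty \frac{(\log Dt)^2}{t^{3/2}}\,dt \asymp (\log D)^2$, which is far too weak; the reduction all the way down to $\log\log D$ comes precisely from using the crude Chebyshev bound on the short initial segment $t \le (\log D)^{O(1)}$, where $\psi(t,\chi)$ exhibits no cancellation, and reserving GRH for the tail. The cutoff is flexible: any fixed power of $\log D$ works, since the head always contributes $\ll \log\log D$ and the tail $\ll 1$.
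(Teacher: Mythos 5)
Your proof is correct, but it is organized differently from the paper's. The paper disposes of this lemma in one line by pointing to the argument of \cref{lemma:log_derivative_fundamental_average}: there, the smoothed explicit formula of Ihara--Murty--Shimura expresses $-\frac{L'}{L}(1,\chi)$ as a weighted prime sum over $m < y$ (bounded trivially by $\ll \log y$ via the prime number theorem) plus a sum over nontrivial zeros, which, once all zeros are known to lie on $\Re\{s\}=\tfrac12$, is $\ll y^{-1/2}\log D$ plus truncation errors; taking $y \asymp (\log D)^2$ gives $\ll \log\log D$. You instead work with the twisted Chebyshev function: the representation $-\frac{L'}{L}(1,\chi)=\int_1^\infty \psi(t,\chi)t^{-2}\,dt$, the trivial Chebyshev bound on $t \le (\log D)^4$, and the black-boxed GRH estimate $\psi(t,\chi)\ll \sqrt{t}\,(\log Dt)^2$ on the tail. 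The two arguments rest on the same balancing principle (no cancellation is claimed below a fixed power of $\log D$, and GRH annihilates everything beyond), and indeed your tail estimate is itself a repackaging of the zero sum. What your route buys is modularity and self-containment: it quotes only a textbook consequence of GRH, and it handles imprimitivity explicitly via the Euler-factor correction $\sum_{p \mid D}\chi^*(p)\log p/(p-\chi^*(p)) \ll \log\log D$ --- a point the statement genuinely requires, since $\chi_{-D}$ need not be primitive, and which the paper deals with separately (by the same Mertens-type bound) only in \cref{lemma:log_derivative_average}. What the paper's route buys is reusability: manipulating the zero sum directly is exactly what allows the identical computation to run in \cref{lemma:log_derivative_fundamental_average} under the weaker hypothesis of a zero-free rectangle for most discriminants (via Jutila's zero-density estimate), a setting where your full-strength $\psi(t,\chi)$ bound is unavailable.
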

%
\begin{proof} This is well known and can be deduced from the arguments in \cref{lemma:log_derivative_fundamental_average}. 
\end{proof}
%

%
%
\section{Average bounds for quadratic characters}
\label{sec:average_bounds}
The purpose of this section is analogous to \cref{sec:uniform_bounds} except we focus on estimates averaging over a certain class of quadratic characters attached to discriminants. To be more specific, for $Q \geq 3$, recall
\begin{equation*}
	\begin{aligned}
		\mathfrak{D}(Q)
			& = \{ \text{discriminants $-D$  with $3 \leq D \leq Q$ }\}. 
	\end{aligned}
\end{equation*}
Here a discriminant is that of a primitive positive definite binary quadratic form. 
We emphasize that a discriminant $-D \in \mathfrak{D}(Q)$ is not necessarily fundamental. The associated Kronecker symbol $\chi_{-D}(\, \cdot \,) = (\frac{-D}{\, \cdot \,})$ is itself a quadratic Dirichlet character. Note the character is primitive if and only if $-D$ is a fundamental discriminant. Our goal is to average certain quantities involving $\chi_{-D}$ over $-D \in \kD(Q)$. 

%
%
%
\subsection{Character sums}
\label{subsec:character_sums_average}

We record a special case of Heath-Brown's mean value theorem for primitive quadratic characters \cite[Corollary 3]{Heath-Brown-1995}. 

%
%
%
\begin{lemma}[Heath-Brown]
	\label{lemma:mean_value_quadratic}
	Let $N,Q \geq 1$ and let $a_1,\dots,a_n$ be arbitrary complex numbers. Let $\mathcal{S}(Q)$ denote the set of all primitive quadratic characters of conductor at most $Q$. Then
	\[
	\sum_{\chi \in \mathcal{S}(Q)} \Big| \sum_{n \leq N} a_n \chi(n) \Big|^2 \ll_{\eta} ( (QN)^{1+\eta} +  Q^{\eta} N^{2+\eta} ) \max_{1 \leq n \leq N} |a_n|
	\]
	for any $\eta > 0$. 	
\end{lemma}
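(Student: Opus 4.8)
This lemma is the $L^{\infty}$ form of Heath-Brown's large sieve for real characters, and the plan is to expand the square, isolate a diagonal main term, and extract cancellation from the off-diagonal via the quadratic reciprocity and Poisson machinery. Since each $\chi \in \cS(Q)$ is real, we have $\chi(n_1)\overline{\chi(n_2)} = \chi(n_1 n_2)$, so expanding the square and interchanging summation gives
\[
\sum_{\chi \in \cS(Q)} \Bigl| \sum_{n \le N} a_n \chi(n) \Bigr|^2 = \sum_{n_1, n_2 \le N} a_{n_1}\overline{a_{n_2}} \sum_{\chi \in \cS(Q)} \chi(n_1 n_2).
\]
Everything therefore reduces to understanding the character sum $C(m) := \sum_{\chi \in \cS(Q)} \chi(m)$ for $m = n_1 n_2 \le N^2$, split according to whether or not $m$ is a perfect square.

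The diagonal consists of the pairs with $m = n_1 n_2$ a perfect square. For such $m$ one has $\chi(m) = 1$ for every $\chi$ coprime to $m$, so $C(m) \ll |\cS(Q)| \ll Q$, there being $\asymp Q$ fundamental discriminants up to $Q$. The number of pairs $(n_1, n_2)$ with $n_1, n_2 \le N$ and $n_1 n_2$ a perfect square is $\ll N^{1+\eta}$, and bounding $|a_{n_1}\overline{a_{n_2}}| \le \max_n |a_n|^2$ shows that the diagonal contributes $\ll Q \cdot N^{1+\eta}\max_n |a_n|^2$, which is the $(QN)^{1+\eta}$ term.

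The substance is the off-diagonal, where $m$ is not a perfect square. After parametrizing $\cS(Q)$ by fundamental discriminants, pulling out the power of $2$, and applying quadratic reciprocity, each $\chi$ becomes (up to a bounded twist by a character modulo $8$) the Jacobi symbol $n \mapsto \bigl(\tfrac{n}{d}\bigr)$ for odd squarefree $d \le Q$, so $C(m)$ becomes essentially $\sum_{d \le Q}^{\flat}\bigl(\tfrac{m}{d}\bigr)$, in which $d \mapsto \bigl(\tfrac{m}{d}\bigr)$ is a nonprincipal character. A term-by-term Pólya–Vinogradov estimate is too lossy here; instead I would smooth the $d$-sum and apply Poisson summation, whose key input is the evaluation of the quadratic Gauss sum (of size $\asymp \sqrt{d}$, and proportional to $\bigl(\tfrac{\cdot}{d}\bigr)$ for odd squarefree $d$). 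The zero frequency reproduces the principal part, while the nonzero frequencies fold back into quadratic character sums of dual length; estimating these collectively yields the complementary bound $\ll N^{2+\eta}\max_n|a_n|^2$, i.e. the $Q^{\eta}N^{2+\eta}$ term.

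The main obstacle is precisely this off-diagonal step: a naive bound summed over the family falls short of the target by roughly a factor of $N$, so the genuine saving must come from the collective (large-sieve) cancellation revealed by Poisson summation. Making this rigorous requires controlling the Poisson error terms uniformly in $m$, treating the ramified primes — those dividing $m$, and especially the prime $2$, where reciprocity and the Gauss-sum evaluation degenerate — and arranging the bookkeeping so that the dependence on $Q$ stays linear rather than quadratic. This delicate self-dual analysis is the heart of Heath-Brown's theorem; for the present application I would simply invoke \cite[Corollary 3]{Heath-Brown-1995} together with the elementary reduction above.
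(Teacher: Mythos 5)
Your proposal is correct and takes essentially the same route as the paper, which gives no independent proof but simply records the lemma as a direct specialization of \cite[Corollary 3]{Heath-Brown-1995}; your elementary reduction (expanding the square and counting the $\ll N^{1+\eta}$ pairs $(n_1,n_2)$ with $n_1n_2$ a perfect square) is exactly the step needed to pass from Heath-Brown's right-hand side, which involves the sum of $|a_{n_1}a_{n_2}|$ over such pairs, to the stated max-norm bound, while your reciprocity/Poisson/Gauss-sum discussion is an account of the cited theorem's proof rather than a genuinely different argument. One further point in your favour: your $\max_n|a_n|^2$ is the homogeneity-correct weight, whereas the paper's first-power $\max_{1\leq n\leq N}|a_n|$ appears to be a typo, harmless in its only application (\cref{lemma:mean_value_quadratic_bqfs}), where $|a_n|\leq 1$.
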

%
  Using \cref{lemma:mean_value_quadratic}, we deduce an analogous mean value result for the quadratic characters attached to such discriminants. 

%
%
%
\begin{lemma}
	\label{lemma:mean_value_quadratic_bqfs}
	Let $N,Q \geq 1$ and  let $\mathfrak{D}(Q)$  be defined by \eqref{def:discriminants}. Then
	\[
	\sum_{\substack{ -D \in \mathfrak{D}(Q) } }\Big| \sum_{n \leq N} \chi_{-D}(n) \Big|^2 \ll_{\eta} (QN)^{1+\eta} +  Q^{1/2+\eta} N^{2+\eta} 
	\]
	for any $\eta > 0$. 	
\end{lemma}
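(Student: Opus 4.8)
The plan is to reduce the sum over all of $\kD(Q)$---most of whose Kronecker symbols are \emph{imprimitive}---to Heath-Brown's mean value theorem (\cref{lemma:mean_value_quadratic}), which only speaks about genuinely primitive characters. The bridge is the standard factorization $-D = \Delta f^2$ of a discriminant into a (negative) fundamental discriminant $\Delta$ and a conductor $f \geq 1$; this is a bijection between $\kD(Q)$ and pairs $(\Delta,f)$ with $|\Delta| f^2 \leq Q$. For such a pair the Kronecker symbol factors as $\chi_{-D}(n) = \chi_{\Delta}(n)\cdot[\gcd(n,f)=1]$, where $\chi_{\Delta} = (\tfrac{\Delta}{\,\cdot\,})$ is the primitive quadratic character of conductor $|\Delta|$ inducing $\chi_{-D}$ (the extra vanishing forced by $|\Delta|$ is already built into $\chi_\Delta$). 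Thus the coprimality constraint at $f$ is the only obstruction to working with a primitive character, and I would verify this factorization first, taking care with the Kronecker symbol at even and ramified primes, which is where the bookkeeping is most error-prone.

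Next I would strip off the coprimality condition by M\"obius inversion. Writing $[\gcd(n,f)=1] = \sum_{d\mid \gcd(n,f)}\mu(d)$ and using complete multiplicativity of $\chi_\Delta$ gives
\[
\sum_{n\leq N}\chi_{-D}(n) = \sum_{d\mid f}\mu(d)\chi_\Delta(d)\, S_{\chi_\Delta}(N/d),
\]
a combination of at most $\tau(f)$ honest partial sums of the primitive character $\chi_\Delta$, where $S_{\chi_\Delta}(t)=\sum_{m\leq t}\chi_\Delta(m)$ as in \cref{lemma:weighted_Dirichlet_sum}. Cauchy--Schwarz then yields the pointwise bound
\[
\Big|\sum_{n\leq N}\chi_{-D}(n)\Big|^2 \leq \tau(f)\sum_{d\mid f}\big|S_{\chi_\Delta}(N/d)\big|^2.
\]

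Then I would sum over $\kD(Q)$ in the $(\Delta,f)$ parametrization, interchange summation so as to fix $f$ and a divisor $d\mid f$ first, and sum over fundamental $\Delta$ with $|\Delta|\leq Q/f^2$. Since the negative fundamental discriminants inject into the set of all primitive quadratic characters and every summand is nonnegative, the inner $\Delta$-sum is dominated by the full sum over $\cS(Q/f^2)$, to which I apply \cref{lemma:mean_value_quadratic} with $a_n\equiv 1$, length $N/d$, and conductor cutoff $Q/f^2$. This produces, for each $f$ and $d\mid f$, a bound of the shape $(QN/(f^2 d))^{1+\eta} + (Q/f^2)^{\eta}(N/d)^{2+\eta}$.

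The final step, and the one requiring genuine care, is the bookkeeping of the divisor sums over $f\leq \sqrt{Q}$ and $d\mid f$. The first (diagonal) term is harmless: $\sum_{d\mid f} d^{-(1+\eta)} \ll_\eta 1$ and $\sum_f \tau(f)^2 f^{-2(1+\eta)}$ converges, so it contributes $\ll_\eta (QN)^{1+\eta}$. The second term is where the exponent $Q^{1/2+\eta}$ must be born, and it is the slowly decaying factor $f^{-2\eta}$ that makes this delicate: after $\sum_{d\mid f} d^{-(2+\eta)}\ll 1$ one is left with $Q^{\eta}N^{2+\eta}\sum_{f\leq \sqrt{Q}}\tau(f)f^{-2\eta}$, and partial summation against $\sum_{f\leq t}\tau(f)\ll t\log t$ gives $\sum_{f\leq \sqrt{Q}}\tau(f) f^{-2\eta}\ll Q^{1/2-\eta}\log Q$. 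Hence this term is $\ll Q^{1/2}N^{2+\eta}\log Q$, which is absorbed into $Q^{1/2+\eta}N^{2+\eta}$ after a harmless enlargement of $\eta$. Combining the two contributions gives the claim. The main obstacle is therefore not any single estimate but the disciplined handling of this $f$-sum so that the imprimitive discriminants contribute exactly $Q^{1/2+\eta}$ rather than a spurious full power of $Q$.
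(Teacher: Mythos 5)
Your proposal is correct and follows essentially the same route as the paper: the factorization $-D = \Delta k^2$, the identity $\chi_{-D} = \chi_{k^2}\chi_{\Delta}$ (equivalently, $\chi_{-D}(n)=\chi_\Delta(n)$ times the coprimality indicator at $k$), reduction to primitive quadratic characters of conductor at most $Q/k^2$, an application of \cref{lemma:mean_value_quadratic}, and the sum over $k \leq \sqrt{Q}$ whose slowly decaying factor $k^{-2\eta}$ is exactly what produces the $Q^{1/2+\eta}N^{2+\eta}$ term. The one difference is in the middle step: the paper exploits the fact that \cref{lemma:mean_value_quadratic} allows arbitrary bounded coefficients and simply takes $a_n = \chi_{k^2}(n)$, which makes the whole proof a three-line chain of inequalities, whereas you first strip the coprimality condition by M\"obius inversion and Cauchy--Schwarz so as to invoke the lemma only with $a_n \equiv 1$; this detour is sound, and the extra factors $\tau(f)$ and $\log Q$ it generates are harmless, being absorbed into $Q^{\eta}$ just as you say.
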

%

%
\begin{proof} If $-D \in \mathfrak{D}(Q)$ then $-D = \Delta k^2$ where $\Delta \in \Z$ is the discriminant of some imaginary quadratic field and $k \geq 1$ is some integer. Consequently, the Kronecker symbol $\chi_{-D}( \, \cdot \,) = (\frac{-D}{\, \cdot \,})$ is induced by the \emph{primitive} quadratic character $\chi_{\Delta}(\, \cdot \,) = (\frac{\Delta}{\, \cdot \,})$. Moreover,
\[
\chi_{-D}(n) = \chi_{k^2}(n) \chi_{\Delta}(n). 
\]
For the details on these facts, see \cite[\textsection 7]{Cox-2013}.  Therefore, by \cref{lemma:mean_value_quadratic}, 
\begin{equation*}
\begin{aligned}
	\sum_{-D \in \mathfrak{D}(Q)}\Big| \sum_{n \leq N} \chi_{-D}(n) \Big|^2
		& \leq \sum_{1 \leq k \leq \sqrt{Q}} \sum_{\substack{1 \leq |\Delta| \leq Q/k^2 \\ \chi_{\Delta} \text{ primitive}}} \Big| \sum_{n \leq N} \chi_{k^2}(n) \chi_{\Delta}(n) \Big|^2 \\
		& \ll_{\eta} \sum_{1 \leq k \leq \sqrt{Q}}  \big( (QN)^{1+\eta} k^{-2-2\eta}  + Q^{\eta} k^{-2\eta} N^{2+\eta} \big) \\
		& \ll_{\eta} (QN)^{1+\eta} + Q^{1/2+\eta} N^{2+\eta}
\end{aligned}	
\end{equation*}
as desired.
\end{proof}

%
%
%
\subsection{Error terms from \cref{lemma:weighted_Dirichlet_sum}}
Again, the results in \cref{subsec:character_sums_average} leads to estimates for the error terms in \cref{lemma:weighted_Dirichlet_sum}. 

%
%
%
\begin{lemma}
	\label{lemma:charsum_errors_dyadic_average}
	Let $X \geq 1, Q \geq 3$ and  let $\mathfrak{D}(Q)$  be defined by \eqref{def:discriminants}. For $x \geq 1$ and any quadratic character $\chi$, define $\cE_0(x; \chi)$ and $\cE_1(x; \chi)$ as in \cref{lemma:weighted_Dirichlet_sum}. Then
	\begin{equation}
		\begin{aligned}
				\sum_{-D \in \mathfrak{D}(Q) } \max_{X \leq x \leq 2X} \cE_0(x; \chi_{-D} ) 
					& \ll_{\eta} 	 Q^{1+\eta} X^{3/5+\eta} + Q^{3/4+\eta} X^{1+\eta}   \\
				\sum_{-D \in \mathfrak{D}(Q) } \max_{X \leq x \leq 2X} \cE_1(x; \chi_{-D} ) 
					& \ll_{\eta} 	 Q^{1+\eta} X^{-1/3+\eta} + Q^{3/4+\eta} X^{\eta}  \\
		\end{aligned}
	\end{equation}
	for $\eta > 0$. 
\end{lemma}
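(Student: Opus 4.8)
The plan is to bound each of $\cE_0$ and $\cE_1$ from above by its value at one well-chosen cut-off parameter $y$ (the \emph{same} $y$ for every discriminant, so the minimum in the definition is dominated), then interchange the sum over $-D$ with the $t$-integral and insert \cref{lemma:mean_value_quadratic_bqfs}. The only averaged input needed is the first moment $T(t) := \sum_{-D \in \kD(Q)} |S_{\chi_{-D}}(t)|$. Cauchy--Schwarz over the $\ll Q$ discriminants in $\kD(Q)$, combined with \cref{lemma:mean_value_quadratic_bqfs}, gives
\[
T(t) \ll_{\eta} Q^{1/2}\big((Qt)^{1/2+\eta} + Q^{1/4+\eta} t^{1+\eta}\big) \ll_{\eta} Q^{1+\eta} t^{1/2+\eta} + Q^{3/4+\eta} t^{1+\eta},
\]
which I use for moderate $t$. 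For large $t$ I need a uniform estimate: the P\'olya--Vinogradov inequality applied to the primitive character inducing $\chi_{-D}$ yields $|S_{\chi_{-D}}(t)| \ll_{\eta} D^{1/2+\eta} \ll_{\eta} Q^{1/2+\eta}$ uniformly in $t$, hence $T(t) \ll_{\eta} Q^{3/2+\eta}$ for every $t$.

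For the $\cE_0$ bound, I fix $1 \le y \le X$ and drop the minimum, so that for all $x \in [X,2X]$ one has $\cE_0(x;\chi_{-D}) \le y^2/X + |S_{\chi_{-D}}(y)| + 2X \int_y^{\infty} t^{-2}|S_{\chi_{-D}}(t)|\,dt$. Summing over $-D \in \kD(Q)$ and using Tonelli to swap sum and integral bounds $\sum_{-D}\max_{X\le x\le 2X}\cE_0$ by $Q\,y^2/X + T(y) + 2X\int_y^{\infty} t^{-2}T(t)\,dt$. I then split the integral at $Y_0 := Q^{3/4}X$, applying the mean value bound for $T$ on $[y,Y_0]$ and the P\'olya--Vinogradov bound $T(t) \ll Q^{3/2+\eta}$ on $[Y_0,\infty)$. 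Taking $y = X^{4/5}$ balances $Q\,y^2/X$ against the main term $X\,Q^{1+\eta}y^{-1/2+\eta}$ of the integral, both becoming $\ll_{\eta} Q^{1+\eta}X^{3/5+\eta}$; the term $T(y)$ and the remaining contributions are all dominated by $Q^{1+\eta}X^{3/5+\eta}+Q^{3/4+\eta}X^{1+\eta}$, giving the first claimed estimate.

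The $\cE_1$ bound follows the same template. For $1 \le y \le X$ I bound $\max_{X\le x \le 2X}\cE_1(x;\chi_{-D})$ by $y/X + \log(2X)\int_y^{\infty} t^{-2}(\log t)\,|S_{\chi_{-D}}(t)|\,dt$ (note there is no $|S_{\chi_{-D}}(y)|$ term here), sum over $-D$, and split the integral at the same $Y_0$; the extra $\log t$ and $\log X$ factors are absorbed into the $\eta$'s. Taking $y = X^{2/3}$ balances $Q\,y/X$ against $Q^{1+\eta}y^{-1/2+\eta}$, both of size $\ll_{\eta} Q^{1+\eta}X^{-1/3+\eta}$, while the $Q^{3/4+\eta}t$-part of $T$ contributes the $Q^{3/4+\eta}X^{\eta}$ term; this is the second claimed estimate.

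The step I expect to be the main obstacle is the control of the tail of the $t$-integral. The mean value estimate alone is not enough: its $Q^{3/4+\eta}t^{1+\eta}$ contribution to $T(t)$ makes $\int^{\infty} t^{-2}T(t)\,dt$ diverge (the integrand behaves like $t^{-1+\eta}$), so a uniform-in-$t$ bound is genuinely needed to truncate. The content is to verify that truncating at $Y_0 = Q^{3/4}X$ simultaneously makes the P\'olya--Vinogradov tail contribute only $\ll_{\eta} Q^{3/4+\eta}$ and leaves the stray factor $Y_0^{\eta}$ from $\int_y^{Y_0} t^{-1+\eta}\,dt$ harmless, so that the second error term stays at the level $Q^{3/4+\eta}X^{1+\eta}$ (respectively $Q^{3/4+\eta}X^{\eta}$). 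Once the truncation level is pinned down, the remaining optimization over $y$ is routine.
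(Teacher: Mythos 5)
Your proposal is correct and takes essentially the same route as the paper's proof: fix a single cut-off $y$ depending only on $X$, drop the minimum, sum over discriminants using Cauchy--Schwarz together with \cref{lemma:mean_value_quadratic_bqfs}, truncate the divergent tail of the $t$-integral via P\'olya--Vinogradov, and optimize with $y = X^{4/5}$ (resp.\ $y = X^{2/3}$). The only difference is cosmetic: you truncate at $Y_0 = Q^{3/4}X$ after summing the P\'olya--Vinogradov bound over all $\ll Q$ discriminants, whereas the paper truncates per character at $y_0 = yQ^{1/2+\eta}$, absorbing each tail into an $X/y$ term before summing; both yield the same error terms.
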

%
\begin{proof}
	For $X \leq x \leq 2X$, let $1 \leq y \leq X$ be an unspecified parameter, depending only on $X$. Set $y_0 :=  y Q^{1/2+\eta}$.  By Polya-Vinogradov, $|S_{\chi}(t)| \ll Q^{1/2} \log Q$ for any character $\chi \pmod{q}$ with $q \leq Q$. Therefore, for such $\chi$, 
	\begin{equation*}
	\begin{aligned}
		\cE_0(x; \chi) 
			& \ll \frac{y^2}{x} + |S_{\chi}(y)| + x \int_y^{\infty} \frac{|S_{\chi}(t)|}{t^2} dt \\
			& \ll_{\eta} \frac{y^2}{x} + |S_{\chi}(y)| + x \int_y^{y_0} \frac{|S_{\chi}(t)|}{t^2} dt + \frac{x}{y}. 		\\
	\end{aligned}
	\end{equation*}
	As $y$ and $y_0$ depend only on $X$ and $Q$, it follows that
	\[
	\max_{X \leq x \leq 2X} \cE_0(x; \chi) \ll_{\eta} \frac{y^2}{X} + |S_{\chi}(y)| + X \int_y^{y_0} \frac{|S_{\chi}(t)|}{t^2} dt + \frac{X}{y}. 
	\]
	Summing the above expression over $\chi = \chi_{-D}$ with $-D \in \mathfrak{D}(Q)$, applying Cauchy-Schwarz, and invoking the previous lemma, we see that
	\begin{equation*}
	\begin{aligned}
			\sum_{-D \in \mathfrak{D}(Q) } \max_{X \leq x \leq 2X} \cE_0(x; \chi_{-D} ) 
				& \ll_{\eta} \frac{Qy^2}{X} + Q^{1+\eta} y^{1/2+\eta} + Q^{3/4+\eta} y^{1+\eta} + \frac{X Q}{y}  \\
				& \qquad + X Q^{1/2} \int_y^{y_0} \frac{ (Qt)^{1/2+\eta} + Q^{1/4+\eta/2} t^{1+\eta/2} }{t^2} dt \\
				& \ll_{\eta}  \frac{Qy^2}{X} + Q^{1+\eta} y^{1/2+\eta} + Q^{3/4+\eta} y^{1+\eta}  + \frac{X Q}{y}  \\
				& \qquad + X Q^{1+\eta} y^{-1/2+\eta}  + X Q^{3/4+\eta/2} y_0^{\eta/2}  \\	
				& \ll_{\eta}  \frac{Qy^2}{X} +  \frac{X Q}{y} + X Q^{1+\eta} y^{-1/2+\eta}  + X^{1+\eta} Q^{3/4+\eta}. \\	
	\end{aligned}
	\end{equation*}
	In the last step, we used the definition of $y_0$ and the fact that $y \leq X$. 
	Selecting $y = X^{4/5}$ implies the desired result after rescaling $\eta$ if necessary. We follow the same procedure for the average of $\cE_1$. First, we deduce that
	\[
	\max_{X \leq x \leq 2X} \cE_1(x; \chi_{-D} )  
		\ll_{\eta} \frac{y}{X} + \log X \int_y^{y_0} \frac{|S_{\chi}(t)| \log t}{t^2}dt + \frac{\log(Qy)}{y}
	\]
	Again, summing over $\chi = \chi_{-D}$ with $-D \in \mathfrak{D}(Q)$, we similarly conclude that
	\begin{equation*}
	\begin{aligned}
			\sum_{-D \in \mathfrak{D}(Q) } \max_{X \leq x \leq 2X} \cE_1(x; \chi_{-D} ) 
				& \ll_{\eta} \frac{Qy}{X} + \frac{Q \log(Qy)}{y} + Q^{1/2} \log X \int_y^{y_0} \frac{ (Qt)^{1/2+\eta} + Q^{1/4+\eta/2} t^{1+\eta/2} }{t^2} dt \\
				& \ll_{\eta}  \frac{Qy}{X} + \frac{Q \log(Qy)}{y} + ( Q^{1+\eta} y^{-1/2+\eta}  + Q^{3/4+\eta/2} y_0^{\eta/2}) \log X  \\	
				& \ll_{\eta}  \frac{Qy}{X} +  \frac{Q^{1+\eta}}{y^{1-\eta}} + X^{\eta} Q^{1+\eta} y^{-1/2+\eta}  + X^{\eta} Q^{3/4+\eta}. 
	\end{aligned}
	\end{equation*}
	Selecting $y = X^{2/3}$ yields the desired result. 
	
\end{proof}
\begin{lemma}
	\label{lemma:charsum_errors_average}
	Let $0 < \epsilon < 1/8, X \geq 1$ and $Q \geq 3$. Let $c = c(\epsilon) > 0$ and $C = C(\epsilon) \geq 1$ be arbitrary constants. For all except at most $O_{\epsilon}(Q^{1-\epsilon/10})$ discriminants $-D \in \mathfrak{D}(Q)$, 
	\begin{equation}
	\cE_0(x; \chi_{-D}) \leq x^{7/8+\epsilon}, \qquad \cE_1(x; \chi_{-D}) \leq x^{-1/8+\epsilon},
	\label{eqn:charsum_errors_average}
	\end{equation}
	uniformly for $c D^{\epsilon} \leq  x \leq C D^{2+\epsilon}$. Here $\cE_0$ and $\cE_1$ are defined as in \cref{lemma:weighted_Dirichlet_sum}. 
\end{lemma}
%

%
\begin{proof} Without loss, we need only consider discriminants $-D \in \kD(Q)$ satisfying $D \geq Q^{1-\epsilon}$ since the remainder are  a collection of negligible size $O(Q^{1-\epsilon})$. For $X \geq 1$, define 
\[
\mathfrak{E}(X,Q,\epsilon) := \{ -D \in \kD(Q) : \text{ there exists $X \leq x \leq 2X$ violating \eqref{eqn:charsum_errors_average} for $\chi_{-D}$ } \}.
\]
By \cref{lemma:charsum_errors_dyadic_average} with $\eta=\epsilon/8$, it follows that
\begin{equation*}
	\begin{aligned}
		\mathfrak{E}(X,Q,\epsilon) 
				& \ll_{\epsilon} Q^{1+\epsilon/8} X^{-11/40-7\epsilon/8} + Q^{3/4+\epsilon/8} X^{1/8-7\epsilon/8}. \\
	\end{aligned}
\end{equation*}
Dyadically summing this estimate over $X$ between $cQ^{\epsilon(1-\epsilon)}$ and $CQ^{2+\epsilon}$, we see that the total number of discriminants $-D \in \mathfrak{D}(Q)$ satisfying $D \geq Q^{1-\epsilon}$ and violating \eqref{eqn:charsum_errors_average} anywhere in the range $cD^{\epsilon} \leq x \leq C D^{2+\epsilon}$ is bounded by
\[
\ll_{\epsilon} 
	(Q^{1+\frac{1}{8}\epsilon -\frac{11}{40}\epsilon(1-\epsilon)} + Q^{1 -\frac{3}{2}\epsilon}) \log Q 
\ll_{\epsilon} Q^{1- \frac{37}{320}\epsilon}\log Q 
\ll_{\epsilon} Q^{1-\frac{1}{10}\epsilon}
\]
as $\epsilon < 1/8$.  
\end{proof}

%
%
%
\subsection{Logarithmic derivatives}
For $Q \geq 3$, define
\[
\kD^*(Q) = \{ \text{fundamental discriminants $\Delta$ with $3 \leq |\Delta| \leq Q$} \}. 
\]
We show that, aside from a sparse set of fundamental discriminants in $\kD^*(Q)$, the logarithmic derivative of $L(s,\chi_{\Delta})$ at $s=1$ satisfies a GRH-quality bound. The key inputs are the explicit formula and Jutila's zero density estimate for primitive quadratic characters. 

%
%
%
%
\begin{lemma} 
	\label{lemma:log_derivative_fundamental_average}
	Let $Q \geq 3$ and $\epsilon > 0$ be arbitrary. For all except at most $O_{\epsilon}( Q^{3/4+\epsilon})$ fundamental discriminants $\Delta \in \kD^*(Q)$, 
	\begin{equation}
	-\frac{L'}{L}(1,\chi_{\Delta}) \ll_{\epsilon} \log\log|\Delta|. 
	\label{eqn:log_derivative_fundamental_average}
	\end{equation}
\end{lemma}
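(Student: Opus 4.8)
The plan is to translate the size of $-\frac{L'}{L}(1,\chi_\Delta)$ into a statement about how far the low-lying zeros of $L(s,\chi_\Delta)$ stray from the critical line, and then to average that statement over $\Delta$ using Jutila's density estimate; the exceptional discriminants are precisely those whose $L$-function has an anomalous accumulation of zeros off the line near $s=1$.

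First I would write down the Hadamard/explicit-formula expansion of the logarithmic derivative of $L(s,\chi_\Delta)$. Because $\chi_\Delta$ is a primitive real character, the nontrivial zeros $\rho=\beta+i\gamma$ are symmetric under $\rho\mapsto\bar\rho$ and, via the functional equation, under $\rho\mapsto 1-\rho$; evaluating the expansion at $s=1$ and using that the constant $B(\chi_\Delta)$ cancels $\sum_\rho\rho^{-1}$ gives
\[
-\frac{L'}{L}(1,\chi_\Delta)=\frac12\log\frac{|\Delta|}{\pi}-\sum_\rho\frac{1-\beta}{(1-\beta)^2+\gamma^2}+O(1),
\]
a difference between an archimedean term and a sum of positive zero contributions. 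If every low zero lay on $\beta=\tfrac12$, the Riemann--von Mangoldt count $N(T,\chi_\Delta)=\tfrac{T}{\pi}\log\tfrac{|\Delta|T}{2\pi}+O(\log|\Delta|T)$ forces the zero sum to equal $\tfrac12\log\tfrac{|\Delta|}{\pi}+O(\log\log|\Delta|)$, so the bound would follow. In general I would isolate the \emph{defect}
\[
\mathcal{D}(\chi_\Delta):=\sum_\rho\Big(\frac{1/2}{1/4+\gamma^2}-\frac{1-\beta}{(1-\beta)^2+\gamma^2}\Big),
\]
and show from the above that $-\frac{L'}{L}(1,\chi_\Delta)=\mathcal{D}(\chi_\Delta)+O(\log\log|\Delta|)$. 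Thus \eqref{eqn:log_derivative_fundamental_average} holds for every $\Delta$ with $|\mathcal{D}(\chi_\Delta)|\ll 1$.

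Second I would bound $\mathcal{D}$ by a weighted count of zeros with $\beta>\tfrac12$. For zeros near the line the summand is $\asymp(\beta-\tfrac12)$, while for a Siegel zero or other zeros very close to $s=1$ one keeps the exact weight, which is $\asymp\frac{1}{1-\beta}$; a zero with $|\gamma|>T$ contributes $O(|\gamma|^{-2})$, so the tail is negligible once $T$ is a small power of $\log|\Delta|$. Writing the resulting count as a Stieltjes integral against $N(\sigma,T,\chi_\Delta)$ (the number of zeros with $\beta\ge\sigma$, $|\gamma|\le T$) and summing over $\Delta\in\kD^*(Q)$, Jutila's zero-density estimate \cite{Jutila-1975} for primitive quadratic characters, in the form $\sum_{\Delta\in\kD^*(Q)}N(\sigma,T,\chi_\Delta)\ll_\epsilon (QT)^{\frac32(1-\sigma)+\epsilon}$, yields after the $\sigma$-integration (dominated by $\sigma=\tfrac12$, and cut off near $\sigma=1$ by the classical zero-free region, with the rare Siegel-zero discriminants excised separately) the mean bound $\sum_{\Delta\in\kD^*(Q)}|\mathcal{D}(\chi_\Delta)|\ll_\epsilon Q^{3/4+\epsilon}$. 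By Markov's inequality the set of $\Delta\in\kD^*(Q)$ with $|\mathcal{D}(\chi_\Delta)|>1$ has size $O_\epsilon(Q^{3/4+\epsilon})$, and for all remaining $\Delta$ we conclude $-\frac{L'}{L}(1,\chi_\Delta)\ll\log\log|\Delta|$.

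The main obstacle is the second step: converting the exact explicit-formula identity into a clean bound by a zero count with \emph{genuine} $\log\log$ precision, in both directions. One must (i) treat a possible Siegel zero and other zeros anomalously close to $s=1$, where the linear-in-$(\beta-\tfrac12)$ approximation to the defect fails and the exact weight $\frac{1-\beta}{(1-\beta)^2+\gamma^2}$ is needed, which in turn forces the $\sigma$-integral to be cut off near $1$ using the zero-free region and the near-$1$ discriminants to be removed by a separate Landau-type zero-repulsion argument; (ii) control the tail $|\gamma|>T$; and (iii) verify that the exponent $\tfrac32$ in the relevant form of Jutila's estimate propagates through the $\sigma$-integral to give exactly the exponent $3/4$ rather than something larger. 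Once the defect is correctly weighted and these three points are handled, the averaging and Markov steps are routine.
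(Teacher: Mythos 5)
Your high-level architecture --- relate $-\frac{L'}{L}(1,\chi_{\Delta})$ to low-lying zeros, then control the bad discriminants with Jutila's density estimate --- is the same as the paper's, and your Hadamard-formula identity in the first step is correct. But the two steps on which your route actually differs from the paper's both contain genuine gaps. The first is the claim that Riemann--von Mangoldt "forces" $\sum_{\rho}\frac{1/2}{1/4+\gamma^2}=\frac12\log\frac{|\Delta|}{\pi}+O(\log\log|\Delta|)$, hence $-\frac{L'}{L}(1,\chi_{\Delta})=\mathcal{D}(\chi_{\Delta})+O(\log\log|\Delta|)$. The zero-counting formula carries an error $O(\log(|\Delta|(|t|+2)))$ at every height, and the weight $\frac{1/2}{1/4+\gamma^2}$ is $\asymp 1$ for $|\gamma|\ll 1$, so partial summation against $N(t,\chi_{\Delta})$ loses a full $O(\log|\Delta|)$: the unknown fluctuation in the number of zeros within distance $O(1)$ of the real axis enters with constant weight. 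Obtaining $\log\log$ precision here is exactly Littlewood's theorem, and its proof requires the truncated explicit formula containing a prime sum $\sum_{m<y}(\tfrac{y}{m}-1)\Lambda(m)\chi(m)$ of length $y=(\log|\Delta|)^{O(1)}$, whose $O(\log y)$ size is what substitutes for the impossible zero-count cancellation. That is precisely what the paper does (via the Ihara--Murty--Shimura form of the explicit formula, with $y\asymp(\log|\Delta|)^2$); your proposal never introduces a prime sum, so the reduction to the defect $\mathcal{D}$ --- the pivot of your argument --- is unsupported. The one-sided inequality you need, $\sum_{\rho}\frac{1/2}{1/4+\gamma^2}\geq\frac12\log\frac{|\Delta|}{\pi}-O(\log\log|\Delta|)$, is a nontrivial "enough low-lying zeros exist" assertion that cannot be extracted from Riemann--von Mangoldt even for a single character.

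The second gap is the form of Jutila's estimate you quote, $\sum_{\Delta}N(\sigma,T,\chi_{\Delta})\ll_{\epsilon}(QT)^{\frac32(1-\sigma)+\epsilon}$. At $\sigma=\frac12$ this would say that the total number of zeros strictly off the critical line, up to height $T$, over all quadratic $L$-functions of conductor at most $Q$, is $\ll(QT)^{3/4+\epsilon}$ --- an almost-all-RH statement for the family with power savings, far beyond anything known. Unconditional density estimates, including the one the paper cites (exponent $\frac{7-6\sigma}{6-4\sigma}$), are trivial at $\sigma=\frac12$ and only become nontrivial for $\sigma$ bounded away from $\frac12$; Jutila's exponent equals $\frac34$ at $\sigma=\frac56$, not at $\sigma=\frac12$. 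This breaks your $\sigma$-integration "dominated by $\sigma=\tfrac12$": zeros with $\beta=\frac12+o(1)$ have small individual defects, but their number cannot be bounded nontrivially, and with a density exponent that is trivial at $\frac12$ the weighted integral comes out of order $QT$ divided by powers of $\log(QT)$, so Markov produces an exceptional set of nearly full size rather than $Q^{3/4+\epsilon}$. The paper's dichotomy is engineered to avoid exactly this: its Littlewood-type argument only needs a zero-free rectangle with left edge at \emph{some} fixed $\sigma_0<1$ (zeros with $\beta\leq\sigma_0$ are harmless after taking $y$ a slightly larger power of $\log|\Delta|$), so the density estimate need only be applied at a $\sigma_0$ where it is genuinely nontrivial, whereas your defect is rigidly anchored to the critical line and has no such flexibility.
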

%

%
\begin{proof} We modify the arguments leading to \cite[Theorem 3]{MourtadaKumar-Murty-2013}. Define $\kD_{\epsilon}^*(Q)$ to be the set of fundamental discriminants $\Delta \in \kD^*(Q)$ such that $|\Delta| \leq Q$ and whose $L$-function $L(s,\chi_{\Delta})$ is zero-free in the rectangle
\begin{equation}
	\frac{1}{2} < \Re\{s\} < 1 \qquad |\Im\{s\}| \leq |\Delta|^{\epsilon}. 
	\label{eqn:zero-free_rectangle}
\end{equation}
First, we estimate $-\frac{L'}{L}(s,\chi_{\Delta})$ for $\Delta \in \kD_{\epsilon}^*(Q)$. For simplicity, write $\chi = \chi_{\Delta}$. From the explicit formula in the form given by \cite[p. 261]{IharaMurtyShimura-2009}, one can verify that
	\begin{equation}
	\begin{aligned}
		-\frac{L'}{L}(1,\chi) 
			& = \frac{1}{y-1} \sum_{m <y} \Big(\frac{y}{m}-1 \Big) \Lambda(m) \chi(m)  - \frac{1}{y-1} \sum_{\rho} \frac{y^{\rho}-1}{\rho (1-\rho)} + O\Big(\frac{\log y}{y}\Big) \\
	\end{aligned}
	\end{equation}
	for $y \geq 2$, where the sum is taken over all non-trivial zeros $\rho = \beta + i\gamma$ of $L(s,\chi)$.   From \cite[Equation  (5.4.6)]{IharaMurtyShimura-2009} and the prime number theorem, it follows for $T \geq 1$ that
	\[
	-\frac{L'}{L}(1,\chi) = -\frac{1}{y-1} \sum_{\substack{\rho \\ |\gamma| \leq T}} \frac{y^{\rho}-1}{\rho (1-\rho)} + O\Big(\log y + \frac{\log(|\Delta|T)}{T} + \frac{\log^2 |\Delta|}{y}\Big). 
	\]
	Set $T = |\Delta|^{\epsilon}$. By the symmetry of the functional equation for real characters $\chi$ and the fact that $L(s,\chi)$ has no zeros in \eqref{eqn:zero-free_rectangle}, it follows that every zero appearing in the sum over $\rho$ satisfies $\Re\{\rho\} = \frac{1}{2}$. Thus, trivially bounding the remaining zeros, we deduce that
	\begin{equation*}
	\begin{aligned}
	-\frac{L'}{L}(1,\chi) 
		& \ll \log y + y^{-1/2} \sum_{\substack{ \rho = \frac{1}{2}+i\gamma \\ |\gamma| \leq |\Delta|^{\epsilon} } } \frac{1}{1+|\gamma|^2}  + \frac{\log |\Delta|}{|\Delta|^{\epsilon}} + \frac{\log^2 |\Delta|}{y} \\
		& \ll \log y + \frac{\log |\Delta|}{y^{1/2}} + \frac{\log |\Delta|}{|\Delta|^{\epsilon}} + \frac{\log^2 |\Delta|}{y} 
		\end{aligned}
	\end{equation*}
	for $y \geq 2$. Setting $y = (\log |\Delta|)^2 + 2$ implies \eqref{eqn:log_derivative_fundamental_average} holds for all $\Delta \in \kD_{\epsilon}^*(Q)$. 
	
	It remains to show that  the number of  discriminants $\Delta \in \kD^*(Q) \setminus \kD_{\epsilon}^*(Q)$ is small. Jutila's zero density estimate \cite[Theorem 2]{Jutila-1975} implies that
	\[
	\sum_{\substack{\Delta \in \kD^*(Q)}} N(\sigma,T,\chi_{\Delta}) \ll_{\epsilon} (QT)^{\tfrac{7-6\sigma}{6-4\sigma} + \frac{\epsilon}{4}}, 
	\]
	where $N(\sigma,T,\chi)$ is the number of zeros $\rho = \beta+i\gamma$ of $L(s,\chi)$ with $\sigma < \beta < 1$ and $|\gamma| \leq T$. Setting $\sigma = 1/2$ and $T = Q^{\epsilon}$, we see that the number of fundamental discriminants $\Delta \in \kD^*(Q)$ whose $L$-function $L(s,\chi_{\Delta})$ has a zero in the rectangle \eqref{eqn:zero-free_rectangle} is at most $O_{\epsilon}(Q^{3/4+\epsilon})$. Hence, $|\kD^*(Q) \setminus \kD_{\epsilon}^*(Q)| \ll_{\epsilon} Q^{3/4+\epsilon}$ as required. 
\end{proof}
%

\cref{lemma:log_derivative_fundamental_average} implies the same type of result for the set of all discriminants $\kD(Q)$.

%
%
%
%
\begin{lemma}
		\label{lemma:log_derivative_average}
		Let $Q \geq 3$ and $\epsilon > 0$ be arbitrary. For all except at most $O_{\epsilon}( Q^{3/4+\epsilon})$   discriminants $-D \in \kD(Q)$, 
	\[
	-\frac{L'}{L}(1,\chi_{-D}) \ll_{\epsilon} \log\log D \leq \epsilon \log D + O_{\epsilon}(1). 
	\]
\end{lemma}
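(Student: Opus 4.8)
The plan is to deduce Lemma \ref{lemma:log_derivative_average} for all discriminants in $\kD(Q)$ from the fundamental-discriminant version established in Lemma \ref{lemma:log_derivative_fundamental_average}. The essential observation, already recorded in the proof of Lemma \ref{lemma:mean_value_quadratic_bqfs}, is that every discriminant $-D \in \kD(Q)$ factors as $-D = \Delta k^2$, where $\Delta$ is a fundamental discriminant of an imaginary quadratic field and $k \geq 1$ is an integer, and that the imprimitive Kronecker character $\chi_{-D}$ is induced by the primitive character $\chi_{\Delta}$ via $\chi_{-D}(n) = \chi_{k^2}(n)\chi_{\Delta}(n)$. Here $\chi_{k^2}$ is the principal character modulo $k^2$, so $\chi_{-D}$ and $\chi_{\Delta}$ differ only at primes dividing $k$.

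First I would relate $-\frac{L'}{L}(1,\chi_{-D})$ to $-\frac{L'}{L}(1,\chi_{\Delta})$. Since $L(s,\chi_{-D}) = L(s,\chi_{\Delta}) \prod_{p \mid k}(1 - \chi_{\Delta}(p)p^{-s})$, taking logarithmic derivatives gives
\[
-\frac{L'}{L}(1,\chi_{-D}) = -\frac{L'}{L}(1,\chi_{\Delta}) + \sum_{p \mid k} \frac{\chi_{\Delta}(p) \log p}{p - \chi_{\Delta}(p)}.
\]
The correction sum is bounded by $\sum_{p \mid k} \frac{\log p}{p-1} \ll \log\log k \ll \log\log D$, which is harmless for the stated estimate. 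Thus whenever $\Delta$ satisfies the GRH-quality bound $-\frac{L'}{L}(1,\chi_{\Delta}) \ll_{\epsilon} \log\log|\Delta| \ll_{\epsilon} \log\log D$ from Lemma \ref{lemma:log_derivative_fundamental_average}, the discriminant $-D = \Delta k^2$ automatically inherits the bound $-\frac{L'}{L}(1,\chi_{-D}) \ll_{\epsilon} \log\log D$, and the final inequality $\log\log D \leq \epsilon \log D + O_{\epsilon}(1)$ is elementary.

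It then remains to count the exceptional discriminants in $\kD(Q)$. A discriminant $-D = \Delta k^2 \in \kD(Q)$ can be exceptional only if its underlying fundamental discriminant $\Delta$ lies in the exceptional set of Lemma \ref{lemma:log_derivative_fundamental_average}, of size $O_{\epsilon}(Q^{3/4+\epsilon})$; note $|\Delta| \leq D \leq Q$ so $\Delta \in \kD^*(Q)$. For each such bad $\Delta$, the number of admissible multipliers $k$ with $|\Delta|k^2 \leq Q$ is at most $(Q/|\Delta|)^{1/2} \leq Q^{1/2}$. Multiplying, the total count of exceptional $-D$ is $\ll_{\epsilon} Q^{3/4+\epsilon} \cdot Q^{1/2} = Q^{5/4+\epsilon}$, which is too large. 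The hard part will therefore be obtaining the clean bound $O_{\epsilon}(Q^{3/4+\epsilon})$: rather than the crude bound $k \leq Q^{1/2}$ applied uniformly, I would sum $\sum_{\Delta \text{ bad}} (Q/|\Delta|)^{1/2}$ and exploit that the exceptional count in Lemma \ref{lemma:log_derivative_fundamental_average} holds at every dyadic scale $|\Delta| \asymp Y$ (giving $O_{\epsilon}(Y^{3/4+\epsilon})$ bad $\Delta$ up to $Y$), so that $\sum_{Y \text{ dyadic}} Y^{3/4+\epsilon}(Q/Y)^{1/2} = Q^{1/2}\sum_Y Y^{1/4+\epsilon} \ll_{\epsilon} Q^{1/2} \cdot Q^{1/4+\epsilon} = Q^{3/4+\epsilon}$, the sum being dominated by the top scale $Y \asymp Q$. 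This dyadic decomposition reconciles the multiplier count with the target exponent and completes the proof after a harmless rescaling of $\epsilon$.
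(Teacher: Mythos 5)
Your proposal is correct and takes essentially the same approach as the paper's proof: the same factorization $-D = \Delta k^2$ with $\chi_{-D} = \chi_{\Delta}\chi_{k^2}$, the same transfer of the bound from \cref{lemma:log_derivative_fundamental_average} at the cost of a correction $\ll \log\log k$ supported on primes $p \mid k$ (your displayed identity has a sign slip --- the Euler-factor sum should enter with a minus sign, $-\frac{L'}{L}(1,\chi_{-D}) = -\frac{L'}{L}(1,\chi_{\Delta}) - \sum_{p \mid k}\frac{\chi_{\Delta}(p)\log p}{p-\chi_{\Delta}(p)}$ --- but this is harmless since you bound it in absolute value), and the same exceptional-set count. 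The only cosmetic difference is in the bookkeeping: the paper fixes $k$ and sums $\sum_{k \leq \sqrt{Q}} (Q/k^2)^{3/4+\epsilon} \ll_{\epsilon} Q^{3/4+\epsilon}$ over $k \leq \sqrt{Q}$, whereas you fix $\Delta$ dyadically and count multipliers $k$; these are the same double count executed in opposite orders, both landing on $O_{\epsilon}(Q^{3/4+\epsilon})$.
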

%

%
\begin{proof}
	Let $-D \in \kD(Q)$ so, as in the proof of \cref{lemma:mean_value_quadratic_bqfs}, we may write $-D = \Delta k^2$ for some negative fundamental discriminant $\Delta$ and an integer $k \geq 1$. It follows that $\chi_{-D}$ is induced by the primitive character $\chi_{\Delta}$ and, in particular, $\chi_{-D} = \chi_{\Delta} \chi_{k^2}$. This implies that
	\[
	\Big| \frac{L'}{L}(1,\chi_{-D}) - \frac{L'}{L}(1,\chi_{\Delta})\Big| \leq \sum_{ (n,k) \neq 1} \frac{\Lambda(n)}{n} \ll \sum_{p \mid k} \frac{\log p}{p} \ll \log\log k. 
	\]
	Thus, if $\Delta$ is a fundamental discriminant satisfying \eqref{eqn:log_derivative_fundamental_average} then
	\[
	-\frac{L'}{L}(1,\chi_{-D}) \ll_{\epsilon} \log\log |\Delta| + \log\log k \ll_{\epsilon} \log\log D.
	\]
	 \cref{lemma:log_derivative_fundamental_average} implies that the total number of discriminants $-D$ failing the above bound is 
	\[
	\ll_{\epsilon} \sum_{k \leq \sqrt{Q}} \Big(\frac{Q}{k^2}\Big)^{3/4+\epsilon} \ll_{\epsilon} Q^{3/4+\epsilon}. 
	\]
	This completes the proof. 
\end{proof}

%
%
\section{Congruence sum decomposition}
\label{sec:congruence_sums}
Let $f(u,v) = au^2 + buv + cv^2$ be a form with discriminant $-D$. For this section, we will not require $f$ to be primitive. For any integer $n \geq 0$, define
%
%
%
\begin{equation}
	r_f(n) := |\{ (u,v) \in \Z^2 : n = f(u,v) \}|.
\end{equation}
Moreover, for $x \geq 1$ and positive integers $\ell$ and $d$, define 
\begin{equation}
	\begin{aligned}
	\cA = \cA(x, f) & := \{ (u,v) \in \Z^2 : f(u,v) \leq x \}, \\
	\cA_{\ell} = \cA_{\ell}(x, f) & := \{ (u,v) \in \cA : f(u,v) \equiv 0 \pmod{\ell} \}, \\
	\cA_{\ell}(d) = \cA_{\ell}(x, f; d) & := \{ (u,v) \in \cA_{\ell} : (v,\ell) = d\}. 
	\end{aligned}
	\label{def:sieve_sequence}
\end{equation}
%
We will suppress the dependence on $x$ and $f$ whenever it is clear from context. This will be the case for almost the entirety of the paper. Observe that
\begin{equation}
|\cA| = \sum_{n \leq x} r_f(n)
\quad \text{and} \quad
|\cA_{\ell}| = \sum_{\substack{n \leq x \\ \ell \mid n}} r_f(n) = \sum_{d \mid \ell} |\cA_{\ell}(d)|. 
\label{eqn:sieve_sequence_observations}
\end{equation}
Note the last identity holds since $\cA_{\ell}$ is a disjoint union of the sets $\cA_{\ell}(d)$ over $d \mid \ell$. To calculate $|\cA_{\ell}(d)|$, and subsequently $|\cA_{\ell}|$, we will need to decompose it into sums similar to $|\cA_{\ell}(1)|$ and estimate them with uniformity over all parameters. To this end, we introduce some additional notation. For any integer $\ell \geq 1$ and $m \in \Z/\ell \Z$, define 
%
%
%
\begin{equation}
	\begin{aligned}
	\cB_{\ell} = \cB_{\ell}(x, f) & := \{ (u,v) \in \cA : (v,\ell)=1, f(u,v) \equiv 0 \pmod{\ell} \}, \\
	\cB_{\ell}(m) = \cB_{\ell}(x, f; m) & := \{ (u,v) \in \cA : (v,\ell) = 1, \, u \equiv mv \pmod{\ell} \}. 
	\end{aligned}
	\label{def:sieve_sequence_primitive}
\end{equation}
%
Note that $\cB_{\ell}$ is exactly the same as $\cA_{\ell}(1)$, but we distinguish it for the sake of clarity. The crucial property of the sets $\cB_{\ell}$ and $\cB_{\ell}(m)$ is summarized in the following lemma.

%
%
%
\begin{lemma}
	\label{lemma:decompose_primitive_congruence_sum}
	Let $f(u,v) = au^2 + buv + cv^2$ be a positive definite binary integral quadratic form of discriminant $-D$ and let $\ell \geq 1$ be a squarefree integer. Define
	\[
	\cM(\ell) = \cM_f(\ell) := \{ m \in \Z/\ell \Z : am^2 + bm + c \equiv 0 \pmod{\ell} \}. 
	\]
	Then
	\begin{equation}
	|\cB_{\ell}| = \sum_{m \in \cM(\ell)} |\cB_{\ell}(m)|.
	\label{eqn:decompose_primitive_congruence_sum}
	\end{equation}
	Furthermore, $M(\ell) = M_f(\ell) := |\cM(\ell)|$ is a non-negative multiplicative function  satisfying 
	\begin{equation}
	\label{def:M_solns}
	M(p) = \begin{cases} 
 					1 + \chi(p) & \text{if } p \nmid a, \\
 					\chi(p) & \text{if } p \mid a \text{ and } p \nmid (a,b,c), \\
 					p & \text{if } p \mid (a,b,c), 
			\end{cases}		
	\end{equation}
	for all primes $p$. Here $\chi = \chi_{-D}$ is the corresponding Kronecker symbol.
\end{lemma}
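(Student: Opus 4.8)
The plan is to first establish the set-theoretic decomposition of $\cB_\ell$ and then compute $M(p)$ locally at each prime, assembling the general value by multiplicativity.

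\textbf{The decomposition.} For the identity \eqref{eqn:decompose_primitive_congruence_sum} I would show that $\cB_\ell$ is the disjoint union of the $\cB_\ell(m)$ over $m \in \cM(\ell)$. Take $(u,v) \in \cA$ with $(v,\ell) = 1$. Since $\ell$ is squarefree and $v$ is coprime to $\ell$, $v$ is a unit modulo $\ell$, so there is a unique residue $m \equiv uv^{-1} \pmod{\ell}$, i.e.\ a unique $m \in \Z/\ell\Z$ with $u \equiv mv \pmod{\ell}$; this places each such $(u,v)$ in exactly one set $\cB_\ell(m)$ and hence shows the $\cB_\ell(m)$ are pairwise disjoint. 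Substituting $u \equiv mv$ gives
\[
f(u,v) = au^2 + buv + cv^2 \equiv v^2(am^2 + bm + c) \pmod{\ell},
\]
and since $v^2$ is a unit mod $\ell$, the defining condition $f(u,v) \equiv 0 \pmod{\ell}$ of $\cB_\ell$ is equivalent to $am^2 + bm + c \equiv 0 \pmod{\ell}$, that is, $m \in \cM(\ell)$. Thus $(u,v)$ lies in $\cB_\ell$ precisely when its associated residue $m$ lies in $\cM(\ell)$, which is exactly \eqref{eqn:decompose_primitive_congruence_sum}.

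\textbf{Multiplicativity.} For $M(\ell) = |\cM(\ell)|$ I would invoke the Chinese Remainder Theorem: if $(\ell_1,\ell_2) = 1$ then reduction gives a bijection $\Z/\ell_1\ell_2\Z \cong \Z/\ell_1\Z \times \Z/\ell_2\Z$ under which $am^2+bm+c \equiv 0 \pmod{\ell_1\ell_2}$ corresponds to simultaneous solvability of the two congruences modulo $\ell_1$ and $\ell_2$. Hence roots modulo $\ell_1\ell_2$ biject with pairs of roots, giving $M(\ell_1\ell_2) = M(\ell_1)M(\ell_2)$. As $\ell$ is squarefree, it then suffices to compute $M(p)$ at primes.

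\textbf{Local computation.} I would split into the three cases of \eqref{def:M_solns}. If $p \mid (a,b,c)$ the congruence holds for every $m$, so $M(p) = p$. If $p \mid a$ but $p \nmid (a,b,c)$, the congruence collapses to the linear relation $bm + c \equiv 0 \pmod p$: when $p \nmid b$ there is the single root $m \equiv -cb^{-1}$, and when $p \mid b$ we necessarily have $p \nmid c$ so there is no root, and in either subcase the count equals $\chi(p) = \big(\tfrac{-D}{p}\big)$ via $-D \equiv b^2 \pmod{p}$. Finally, if $p \nmid a$ I would complete the square through the identity
\[
4a(am^2 + bm + c) = (2am + b)^2 + D,
\]
so that for odd $p$ the substitution $t = 2am + b$ (a bijection of $\Z/p\Z$) reduces the count to the number of square roots of $-D$ modulo $p$, namely $1 + \big(\tfrac{-D}{p}\big) = 1 + \chi(p)$.

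\textbf{Main obstacle.} The delicate point is $p = 2$, where one cannot divide by $2a$ to complete the square. Here I would instead compute $M(2)$ directly by testing $m \in \{0,1\}$ against the parities of $a,b,c$, and match the outcome to the Kronecker symbol $\chi(2)$ by reducing $-D = b^2 - 4ac$ modulo $8$ and invoking the defining values of $\big(\tfrac{\,\cdot\,}{2}\big)$. Carrying out this parity bookkeeping in each subcase (including the $p \mid a$ case when $p = 2$) is the only genuinely fiddly part of the argument; everything else follows cleanly from the unit structure of $\Z/\ell\Z$ and the Chinese Remainder Theorem.
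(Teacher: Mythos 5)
Your proposal is correct and follows essentially the same route as the paper: the same disjoint-union decomposition via the unique residue $m \equiv uv^{-1} \pmod{\ell}$, multiplicativity by the Chinese Remainder Theorem, the reduction to a linear congruence when $p \mid a$, the standard quadratic-root count $1+\chi(p)$ when $p \nmid a$ (which the paper invokes as "the definition of $\chi$" where you complete the square explicitly), and a direct parity check at $p=2$, which the paper likewise dispatches as a "tedious case analysis."
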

%

\begin{proof}
Let $(u,v) \in \cB_{\ell}$. As $(v,\ell) = 1$, select $m \in \Z/\ell\Z$ such that $u \equiv mv \pmod{\ell}$. Thus,
\[
f(u,v) \equiv 0 \pmod{\ell} \iff (am^2 + bm + c)v^2 \equiv 0 \pmod{\ell} \iff m \in \cM(\ell). 
\]
This implies $\cB_{\ell}$ is a union of $\cB_{\ell}(m)$ over $m \in \cM(\ell)$. One can verify from \eqref{def:sieve_sequence_primitive} that $m_1 \not\equiv m_2 \pmod{\ell}$ implies $\cB_{\ell}(m_1) \cap \cB_{\ell}(m_2) = \emptyset$. Thus, the union is in fact disjoint yielding \eqref{eqn:decompose_primitive_congruence_sum}.

Next, we count $M(\ell) = |\cM(\ell)|$. The function $M(\ell)$ is multiplicative by the Chinese Remainder Theorem.  Let $p$ be an odd prime. If $p \nmid a$ then $M(p) = 1+\chi(p)$ by the definition of $\chi$. If $p \mid a$ then for $m \in \cM(\ell)$
\begin{equation}
0 \equiv am^2 +bm + c \equiv bm+c \pmod{p}.
\label{eqn:quadratic_to_linear_congruence}
\end{equation}
Note in this scenario $\chi(p) = 0$ or $1$ only. We consider cases.
\begin{itemize}
	\item If $p \nmid b$ then $m \equiv - b^{-1} c \pmod{p}$ is the only solution to \eqref{eqn:quadratic_to_linear_congruence}. Thus, $M(p) = 1 = \big(\frac{b^2-4ac}{p}\big) = \chi(p)$. If $p=2$ then note $b^2-4ac \equiv 1 \pmod{8}$, so $\chi(2) = 1$ indeed. 
	\item If $p \mid b$ then condition \eqref{eqn:quadratic_to_linear_congruence} becomes $c \equiv  0 \pmod{p}$. We further subdivide the cases.
		\begin{itemize}
				\item If $p \nmid c$ then no value of $m$ satisfies \eqref{eqn:quadratic_to_linear_congruence} implying $M(p) = 0 = (\frac{b^2-4ac}{p}) = \chi(p)$. 
				\item If $p \mid c$ then $p \mid (a,b,c)$ in this subcase. Hence, all $m \in \Z/p\Z$ vacuously satisfy \eqref{eqn:quadratic_to_linear_congruence} so $M(p) = p$. 
		\end{itemize}
\end{itemize}
Comparing these cases, we see $M(p)$ indeed satisfies \eqref{def:M_solns} for all odd primes $p$. For $p=2$,  one can verify by a tedious case analysis that $M(2)$ also satisfies \eqref{def:M_solns}.  
\end{proof}
%

In light of \cref{lemma:decompose_primitive_congruence_sum}, the main goal of this section is to determine the size of $|\cB_{\ell}(m)|$ for any $m \in \Z/\ell \Z$.  For convenience, set
%
%
\begin{equation}
	V = V(x, f) := \sqrt{\frac{4ax}{D}}. 
	\label{def:V}
\end{equation}
%
This notation will be used throughout the paper. 
While we are more interested when $V \geq 1$, we only assume $x \geq 1$ in all of our arguments so it is possible that $0 < V < 1$. Recall $\varphi$ denotes the Euler totient function and $\tau$ is the divisor function. 

%
%
%
\begin{lemma}
	\label{lemma:primitve_congruence_sum_fibre} Let $f(u,v) = au^2 + buv + cv^2$ be a positive definite binary integral quadratic form of discriminant $-D$. Let $\ell \geq 1$ be a squarefree integer and $m \in \Z/\ell \Z$. For $x \geq 1$, 
	\begin{equation}
	|\cB_{\ell}(m)| = \frac{\varphi(\ell)}{\ell^2} \cdot \frac{\pi \sqrt{D}}{2a} V^2 + O\Big(V +  \ell^{1/2} \tau(\ell) \frac{\sqrt{D}}{a} V^{1/2} + \delta(\ell)\Big),
	\label{eqn:congruence_sum_fibre}
	\end{equation}
	where $\cB_{\ell}(m) = \cB_{\ell}(x, f; m)$ is defined by \eqref{def:sieve_sequence_primitive},  $V= V(x,f)$ is defined by \eqref{def:V}, and $\delta(\ell)$ is the indicator function for $\ell=1$.
\end{lemma}
%
\begin{remark}
	We emphasize that the righthand side of \eqref{eqn:congruence_sum_fibre} is independent $m \in \Z/\ell\Z$. 	
\end{remark}

\begin{proof}
Counting the numbers of pairs $(u,v) \in \Z^2$ satisfying $f(u,v) \leq x$ amounts to verifying the inequality
\[
(2au + bv)^2 + D v^2 \leq 4ax. 
\]
Fixing $v$, any $u$ satisfying the above inequality lies in the range
\[
\frac{-bv -\sqrt{4ax - D v^2}}{2a} \leq u \leq \frac{-bv+\sqrt{4ax-Dv^2}}{2a}.
\]
Without loss, we may assume $m$ is an integer lying in $\{0,1,\dots,\ell-1\}$. Restricting to $u  = mv + j\ell$, we see that the integer $j$ must lie in the range
\[
\frac{- (b+2am)v-\sqrt{4ax - D v^2} }{2a\ell} \leq j \leq \frac{- (b+2am)v + \sqrt{4ax-Dv^2}}{2a\ell}.
\]
For each fixed $v$ and solution $u \equiv m v \pmod{\ell}$, the total number of such integers $j$ is therefore
\[
\tfrac{1}{\ell} F(v) + O(1),
\]
where
\begin{equation}
F(v) = \frac{1}{a} \sqrt{4ax-Dv^2} = \frac{\sqrt{D}}{a} \sqrt{V^2 - v^2}.
\label{def:F}	
\end{equation}
Now, summing over all integers $v$ satisfying $|v| \leq V$ and $(v,\ell) = 1$, we deduce that
\begin{equation*}
\begin{aligned}
|\cB_{\ell}(m)| 
	& = \frac{1}{\ell} \sum_{\substack{|v| \leq V \\ (v,\ell)=1}}   F(v) + O(\sum_{\substack{|v| \leq V \\ (v,\ell)=1}} 1 ).
\end{aligned}
\end{equation*}
The term $v=0$ contributes to the above sums if and only if $\ell = 1$. Let $\delta(\ell)$ be the indicator function for $\ell=1$. We separate the term $v=0$, if necessary, in the sums above and note $F(v)$ is even to see that
\begin{equation}
	\begin{aligned}
		|\cB_{\ell}(m)| 
		& = \frac{2}{\ell} \sum_{\substack{1 \leq v \leq V \\ (v,\ell)=1}} F(v) + \frac{\delta(\ell)}{\ell} \frac{\sqrt{D}}{a} V + O(V + \delta(\ell)).
	\end{aligned}
	\label{eqn:fibre_step1}
\end{equation}
We remove the condition $(v,\ell) = 1$ via Mobius inversion and deduce that
\begin{equation}
 \sum_{\substack{1 \leq v \leq V \\ (v,\ell)=1}} F(v) 
 	= \sum_{d \mid \ell} \mu(d)  \sum_{\substack{1 \leq w \leq V/d}} F(dw). 
 \label{eqn:sqrt_average_primitive}
\end{equation}
Set $W_d := V/d$ and notice $F(dw) =  \frac{d\sqrt{D}}{a} \sqrt{W_d^2-w^2}$. By \cref{lemma:sqrt_average}, we see that
\begin{equation*}
\begin{aligned}
\sum_{\substack{1 \leq w \leq V/d}} F(dw) 
& = \frac{d \sqrt{D}}{a} \Big( \frac{\pi V^2}{4d^2} - \frac{V}{2d} + O( \sqrt{V/d}) \Big). \\
\end{aligned}
\end{equation*}
Since $\sum_{d \mid \ell} \frac{\mu(d)}{d} = \frac{\varphi(\ell)}{\ell}$, $\sum_{d \mid \ell} \mu(d) = \delta(\ell)$, and $\sum_{d \mid \ell} d^{1/2} \ll  \ell^{1/2} \tau(\ell)$, it follows by \eqref{eqn:sqrt_average_primitive} that
\begin{equation}
 \sum_{\substack{1 \leq v \leq V \\ (v,\ell)=1}} F(v) = \frac{\varphi(\ell)}{\ell} \frac{\pi \sqrt{D}}{4ad} V^2 - \delta(\ell) \frac{\sqrt{D}}{2a} V + O\Big( \ell^{1/2} \tau(\ell)  \frac{\sqrt{D}}{a} V^{1/2} \Big).
 \label{eqn:sqrt_average_primitive_final}
\end{equation}
Combining \eqref{eqn:fibre_step1} and \eqref{eqn:sqrt_average_primitive_final} yields \eqref{eqn:congruence_sum_fibre}. Note that the terms involving $\delta(\ell)$ cancel.  
\end{proof}
%

We conclude this section by calculating $|\cB_{\ell}|$. 

%
%
%
\begin{lemma}
	\label{lemma:primitive_congruence_sum}
	Let $f(u,v) = au^2 + buv + cv^2$ be a positive definite binary integral quadratic form of discriminant $-D$.  If $\ell \geq 1$ is a squarefree integer then
	\[
	|\cB_{\ell}|  = M(\ell) \Big( \frac{\varphi(\ell)}{\ell^2} \cdot \frac{\pi \sqrt{D}}{2a} V^2 + O\Big(V +  \ell^{1/2} \tau(\ell) \frac{\sqrt{D}}{a} V^{1/2} + \delta(\ell)\Big) \Big),
	\]
	where $\cB_{\ell} = \cB_{\ell}(x, f)$ is defined by \eqref{def:sieve_sequence_primitive},  $V= V(x,f)$ is defined by \eqref{def:V}, $\delta(\ell)$ is the indicator function for $\ell=1$, and $M(\ell) = M_f(\ell)$ is a multiplicative function defined by \eqref{def:M_solns}. 
\end{lemma}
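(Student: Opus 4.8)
The plan is to combine the two preceding lemmas directly, since all of the analytic work has already been carried out. First I would invoke \cref{lemma:decompose_primitive_congruence_sum} to write
\[
|\cB_{\ell}| = \sum_{m \in \cM(\ell)} |\cB_{\ell}(m)|,
\]
reducing the computation of $|\cB_{\ell}|$ to a sum of the fibre counts $|\cB_{\ell}(m)|$ over the $M(\ell) = |\cM(\ell)|$ residue classes $m$ solving $am^2 + bm + c \equiv 0 \pmod{\ell}$.

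Next I would substitute the asymptotic formula from \cref{lemma:primitve_congruence_sum_fibre} for each $|\cB_{\ell}(m)|$. The crucial point, emphasized in the remark following that lemma, is that both the main term $\frac{\varphi(\ell)}{\ell^2} \cdot \frac{\pi \sqrt{D}}{2a} V^2$ and the error term $O\big(V + \ell^{1/2}\tau(\ell) \frac{\sqrt{D}}{a} V^{1/2} + \delta(\ell)\big)$ are independent of $m$. Consequently, summing the common per-fibre expression over the $M(\ell)$ values of $m \in \cM(\ell)$ simply multiplies it by $M(\ell)$, yielding exactly
\[
|\cB_{\ell}| = M(\ell) \Big( \frac{\varphi(\ell)}{\ell^2} \cdot \frac{\pi \sqrt{D}}{2a} V^2 + O\Big(V + \ell^{1/2} \tau(\ell) \frac{\sqrt{D}}{a} V^{1/2} + \delta(\ell)\Big) \Big).
\]

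There is no real obstacle remaining at this stage; the genuine difficulty was already absorbed into the uniform lattice-point count of \cref{lemma:primitve_congruence_sum_fibre}, whose error term holds uniformly in $m$ and hence survives the summation without accruing an $m$-dependent loss. I would only note in passing that the multiplicativity of $M$ and the explicit local factors in \eqref{def:M_solns} are not needed here—those enter later when $|\cB_{\ell}|$ is inserted into the sieve—so at this step $M(\ell)$ may be treated purely as the cardinality $|\cM(\ell)|$.
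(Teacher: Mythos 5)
Your proposal is correct and is exactly the paper's proof: the paper likewise obtains $|\cB_{\ell}|$ by combining \cref{lemma:decompose_primitive_congruence_sum} with \cref{lemma:primitve_congruence_sum_fibre}, noting that the fibre estimate is uniform in $m \in \Z/\ell\Z$ so the sum over $\cM(\ell)$ contributes the factor $M(\ell)$. Your additional observation that the explicit formula \eqref{def:M_solns} for $M$ is not needed at this step is also accurate.
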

%

\begin{proof}
This is an immediate consequence of \cref{lemma:decompose_primitive_congruence_sum,lemma:primitve_congruence_sum_fibre} since the latter lemma's estimates are uniform over all $m \in \Z/\ell\Z$. 
\end{proof}

%
%
\section{Local densities}
\label{sec:local_densities}
We may now assemble our tools to establish the key technical proposition. Namely, we estimate the congruence sums given by \eqref{eqn:sieve_sequence_observations} and calculate the local densities.  
 
%
%
%
\begin{proposition}
	\label{prop:local_density} Let $f$ be a primitive positive definite binary quadratic form with discriminant $-D$. If $\ell \geq 1$ is a squarefree integer then for any $\epsilon > 0$ and $x \geq 1$,
	\begin{equation}
	|\cA_{\ell}| = \sum_{\substack{n \leq x \\ \ell \mid n}} r_f(n) = 	g(\ell)  \frac{\pi \sqrt{D}}{2a}  V^2 + O\Big( \tau_3(\ell) V + \ell^{1/2} \tau(\ell) \tau_3(\ell) \frac{\sqrt{D}}{a} V^{1/2} + 1 \Big),
	\label{eqn:congruence_sum_final}
	\end{equation}
	where $V = \sqrt{4ax/D}$ and $g$ is a multiplicative function satisfing
	\begin{equation}
		g(p) = \frac{1}{p} \Big(1 + \chi(p) - \frac{\chi(p)}{p} \Big) \quad \text{for all primes $p$}.	
		\label{def:local_density}
	\end{equation}
	Here $\chi = \chi_{-D}$ is the corresponding Kronecker symbol.
\end{proposition}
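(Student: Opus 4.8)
The plan is to split $\cA_\ell$ according to the last identity in \eqref{eqn:sieve_sequence_observations}, namely $|\cA_\ell| = \sum_{d\mid\ell}|\cA_\ell(d)|$, and to express each piece $|\cA_\ell(d)|$ through the quantities $|\cB_{\ell'}(m)|$ already estimated in \cref{lemma:primitve_congruence_sum_fibre}. Fix $d\mid\ell$ and write $e = \ell/d$, so that $(d,e)=1$ since $\ell$ is squarefree. A point $(u,v)\in\cA$ lies in $\cA_\ell(d)$ exactly when $d\mid v$, $(v/d,e)=1$, and $\ell\mid f(u,v)$. By the Chinese Remainder Theorem the last condition splits into $d\mid f(u,v)$ and $e\mid f(u,v)$. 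As $d\mid v$ we have $f(u,v)\equiv au^2\pmod d$, so $d\mid f(u,v)\iff d\mid au^2$; because $f$ is primitive and $d$ is squarefree, this is equivalent to $d_2\mid u$, where $d_1 := (d,a)$ and $d_2 := d/d_1$ (at primes dividing both $d$ and $a$ the congruence is automatic, while at the remaining primes $p\mid d$ one needs $p\mid u$). On the other hand $(v,e)=1$, so exactly as in \cref{lemma:decompose_primitive_congruence_sum} the condition $e\mid f(u,v)$ holds iff $u\equiv mv\pmod e$ for some $m\in\cM(e)$, and distinct $m$ give disjoint conditions. Hence
\[
|\cA_\ell(d)| = \sum_{m\in\cM(e)} |\mathcal{C}(d,m)|, \qquad \mathcal{C}(d,m) := \{(u,v)\in\cA : d\mid v,\ (v/d,e)=1,\ d_2\mid u,\ u\equiv mv \!\!\pmod e\}.
\]

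Next I would linearize each $\mathcal{C}(d,m)$ by the substitution $(u,v)=(d_2 u',d v')$, which is a bijection onto $\cB_e(\tilde f; m')$, where $\tilde f(u',v') := f(d_2 u', d v') = a d_2^2 u'^2 + bd_2 d\, u'v' + cd^2 v'^2$ and $m' \equiv m d d_2^{-1}\pmod e$ (well defined since $(d_2 d,e)=1$). The form $\tilde f$ is again positive definite and integral, of discriminant $-\tilde D$ with $\tilde D = d_2^2 d^2 D$, leading coefficient $\tilde a = a d_2^2$, and parameter $\tilde V = \sqrt{4\tilde a x/\tilde D} = V/d$; note $\tilde f$ need not be primitive, but \cref{lemma:primitve_congruence_sum_fibre} does not require this. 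Applying that lemma to $\tilde f$ (its estimate is uniform in the residue $m'$) and summing over the $M(e)$ values $m\in\cM(e)$ gives
\[
|\cA_\ell(d)| = \frac{M(e)\varphi(e)}{e^2\, d\, d_2}\cdot\frac{\pi\sqrt D}{2a}V^2 + O\!\Big( M(e)\tfrac{V}{d} + M(e)\tau(e)\,\ell^{1/2}\tfrac{\sqrt D}{a}V^{1/2} + M(e)\delta(e)\Big),
\]
after simplifying $\tfrac{\sqrt{\tilde D}}{\tilde a}\tilde V^2 = \tfrac{\sqrt D}{a\,d\,d_2}V^2$ and $\tfrac{\sqrt{\tilde D}}{\tilde a}\tilde V^{1/2} = \tfrac{d^{1/2}}{d_2}\tfrac{\sqrt D}{a}V^{1/2}$, then bounding $d^{1/2}/d_2 = d_1/d^{1/2}\le d^{1/2}$ and $e^{1/2}d^{1/2}=\ell^{1/2}$.

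It remains to sum over $d\mid\ell$. For the main term this yields $\tfrac{\pi\sqrt D}{2a}V^2\sum_{d\mid\ell}\tfrac{M(\ell/d)\varphi(\ell/d)}{(\ell/d)^2 d\, d_2}$, and since all factors are multiplicative in the squarefree variable the sum factors over primes $p\mid\ell$ into the local contribution $\tfrac{M(p)(p-1)}{p^2} + \tfrac{1}{p^2}$ when $p\nmid a$ and $\tfrac{M(p)(p-1)}{p^2} + \tfrac{1}{p}$ when $p\mid a$. Using \eqref{def:M_solns} (so $M(p)=1+\chi(p)$, resp.\ $M(p)=\chi(p)$, for primitive $f$) both expressions equal $\tfrac{p+\chi(p)(p-1)}{p^2} = g(p)$, whence the main term collapses to $g(\ell)\tfrac{\pi\sqrt D}{2a}V^2$. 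For the errors I would use $M(p)\le 2$, giving $\sum_{e\mid\ell}M(e)=\prod_{p\mid\ell}(1+M(p))\le\prod_{p\mid\ell}3=\tau_3(\ell)$ (so the first and $\delta$ terms, the latter surviving only for $e=1$, contribute $O(\tau_3(\ell)V+1)$) and $\sum_{e\mid\ell}M(e)\tau(e)=\prod_{p\mid\ell}(1+2M(p))\le\prod_{p\mid\ell}5\le\prod_{p\mid\ell}6=\tau(\ell)\tau_3(\ell)$ for the middle term; combining these reproduces the stated error $O\big(\tau_3(\ell)V + \ell^{1/2}\tau(\ell)\tau_3(\ell)\tfrac{\sqrt D}{a}V^{1/2}+1\big)$.

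The main obstacle is the decomposition in the first paragraph: exploiting primitivity of $f$ to convert the degenerate $d$-part congruence $d\mid au^2$ into the clean divisibility $d_2\mid u$, and then verifying that the resulting two-case local sum conspires, via the common value $p+\chi(p)(p-1)$, to produce exactly $g(p)$ whether or not $p\mid a$. Everything else is bookkeeping with the substitution $(u,v)=(d_2u',dv')$ and elementary divisor-sum bounds.
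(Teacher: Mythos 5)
Your proposal is correct and takes essentially the same approach as the paper: the identical decomposition $|\cA_\ell|=\sum_{d\mid \ell}|\cA_\ell(d)|$, the same CRT reduction of the $d$-part congruence to $\tfrac{d}{(a,d)}\mid u$ using primitivity, the same change of variables (your $\tilde f(u',v')=f(d_2u',dv')$ at height $x$ is exactly the paper's $f_{(a,d)}$ at rescaled height $(a,d)^2x/d^2$), an application of \cref{lemma:primitve_congruence_sum_fibre}, and the same two-case local computation showing each prime factor contributes $\tfrac{p+\chi(p)(p-1)}{p^2}=g(p)$. The only cosmetic difference is ordering: you split into residue classes $m\in\cM(e)$ \emph{before} substituting, which transports the classes of the original $f$ and so sidesteps the paper's verification that $M_{f_{(a,d)}}(k)=M_f(k)$, whereas the paper substitutes first and then invokes \cref{lemma:primitive_congruence_sum}.
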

%

\begin{proof} Let $d \mid \ell$ and let $\cA_{\ell}(d) = \cA_{\ell}(x, f; d)$ be defined by \eqref{def:sieve_sequence}. From observation \eqref{eqn:sieve_sequence_observations}, it suffices to calculate $|\cA_{\ell}(d)|$. First, we introduce some notation. For any integer $r \geq 1$, set 
\begin{equation}
f_r(u,w) := f(u,rw) = au^2 + bruw + cr^2 w^2. 
\label{def:f_r}
\end{equation}
Notice that its discriminant is $-r^2D$. Therefore, it follows for any $\alpha > 0$  that
\[
V(\alpha^2 x,f_r) = \frac{\alpha V}{r} \qquad \text{and} \qquad \chi_{-r^2D}(n) = \begin{cases} \chi(n) & \text{if $(n,r) = 1$} \\
 0 & \text{otherwise,} 	
 \end{cases}
\]
where $V = V(x,f)$ and $\chi = \chi_{-D}$ as usual. 

Now, write $\ell = dk$ so $(d,k) = 1$ as $\ell$ is squarefree. We wish to character each point $(u,v) \in \cA_{\ell}(x, f; d)$. Since $(v,\ell) = d$, it follows by the Chinese Remainder Theorem that
\begin{equation}
\begin{aligned}
f(u,v) \equiv 0 \pmod{\ell} 
	& \iff au^2 \equiv 0 \pmod{d} \quad \text{and} \quad f(u,v) \equiv 0 \pmod{k} \\
	& \iff u \equiv 0 \pmod{\tfrac{d}{(a,d)}} \quad \text{and} \quad f(u,v) \equiv 0 \pmod{k}.  
\end{aligned}
\label{eqn:cov_CRT_1}
\end{equation}
Write $u = \frac{d}{(a,d)} s$ and $v = dt$ for integers $s$ and $t$. Note $(t,k) = 1$ as $(v,\ell) = d$ and $\ell$ is squarefree. Then one can verify that
\begin{equation}
f(u,v) =  \frac{d^2}{(a,d)^2} \cdot \big( a s^2 + b (a,d) st + c(a,d)^2 t^2 \big) = \frac{d^2}{(a,d)^2} \cdot f_{(a,d)}(s,t). 
\label{eqn:cov_CRT_2}
\end{equation}
From this change of variables, \eqref{eqn:cov_CRT_1}, and \eqref{eqn:cov_CRT_2}, we see that
\begin{equation}
	\begin{aligned}
		f(u,v) \equiv 0 \pmod{\ell} & \iff f_{(a,d)}(s,t) \equiv 0 \pmod{k} \\
		f(u,v) \leq x & \iff f_{(a,d)}(s,t) \leq \frac{(a,d)^2}{d^2} x. 
	\end{aligned}
\end{equation}
Note by \eqref{eqn:cov_CRT_2} that the congruence conditions are equivalent as $(d,k) =1$ and $\ell = dk$. Since $(t,k) = 1$ necessarily, we have therefore established that
\begin{equation}
|\cA_{\ell}(x, f; d)| = \Big|\cB_k\Big( \frac{(a,d)^2x }{d^2}, f_{(a,d)}\Big)\Big|. 
\label{eqn:local_density_correspondece}
\end{equation}
Summing this identity over $d \mid \ell$, we apply observation \eqref{eqn:sieve_sequence_observations} and \cref{lemma:primitive_congruence_sum} to deduce that
\begin{equation}
\begin{aligned}
	|\cA_{\ell}(x,f)| 
		& = \sum_{\ell = dk} \Big|\cB_{k}(\frac{(a,d)^2 x}{d^2}, f_{(a,d)}) \Big|	\\
		& =  \frac{\pi \sqrt{D}}{2a} V^2 \cdot\sum_{\ell = dk} M_{f_{(a,d)}}(k) \frac{\varphi(k)}{k^2} \frac{(a,d)}{d^2} + O\Big( V \cdot \sum_{\ell = dk} \frac{M_{f_{(a,d)}}(k)}{d}  \\
		& \qquad \qquad +  \frac{\sqrt{D}}{a}V^{1/2} \sum_{\ell = dk} M_{f_{(a,d)}}(k) k^{1/2} \tau(k) d^{-1/2} + \sum_{\ell = dk} M_{f_{(a,d)}}(k) \delta(k) \Big). 
\end{aligned}	
\label{eqn:congruence_sum_unsimplified}
\end{equation}
We wish to simplify the remaining sums and error term. Let $r \mid \ell$. As $f$ is primitive, 
\[
M_{f}(p) = \begin{cases} 
 	1 + \chi(p) & \text{if $p \nmid a$,} \\
 	\chi(p) & \text{if $p \mid a$,} \\
 \end{cases}
\]
by \eqref{def:M_solns}. To compute $M_{f_r}$, observe by the primitivity of $f$ that a prime $p$ divides $(a, br, cr^2)$ if and only if $p$ divides $(a,r)$. Moreover, if $p \mid r$ then $\chi_{r^2D}(p) = \big(\frac{r^2D}{p}\big) = \big( \frac{r^2}{p}\big) \big(\frac{D}{p}\big) = 0$ and, similarly, if $p \nmid r$ then $\chi_{r^2D}(p) = \chi(p)$.  Combining these observations with \eqref{def:M_solns} and \eqref{def:f_r}, we see that
\[
M_{f_{(a,d)}}(p) = \begin{cases} 
 	1 + \chi(p) & \text{if $p \nmid a$,} \\
 	\chi(p) & \text{if $p \mid a$ and $p \nmid (a,d)$,} \\
 	p & \text{if $p \mid (a,d)$}. 	
 \end{cases}
\]
In particular, as $(d,k) = 1$, it follows that $M_{f_{(a,d)}}(k) = M_f(k)$. Hence, 
\begin{equation}
\begin{aligned}
& \sum_{\ell = dk} M_{f_{(a,d)}}(k) \frac{\varphi(k)}{k^2} \frac{(a,d)}{d^2}\\
& \qquad = \sum_{\ell=dk} M_f(k) \cdot \frac{\varphi(k)}{k^2} \cdot \frac{(a,d)}{d^2} 	\\
& \qquad = \prod_{\substack{p \mid \ell \\ p \nmid a}} \Big( (1+\chi(p)) ( \frac{1}{p} - \frac{1}{p^2}) + \frac{1}{p^2} )\Big) \times \prod_{p \mid (\ell,a)} \Big( \chi(p)(\frac{1}{p}-\frac{1}{p^2}) + \frac{1}{p}\Big) \\
& \qquad = \prod_{p \mid \ell} \Big( \frac{1+\chi(p)}{p} - \frac{\chi(p)}{p^2}\Big) = g(\ell).
\end{aligned}	
\label{eqn:local_density_main_term}
\end{equation}
Similarly,
\begin{equation}
\begin{aligned}
\sum_{\ell = dk} \frac{M_{f_{(a,d)}}(k)}{d} 
& = \prod_{\substack{p \mid \ell \\ p \nmid a}} \Big(1 + \chi(p) + \frac{1}{p}\Big) \times \prod_{p \mid (\ell,a)} \Big( \chi(p) + \frac{1}{p} \Big)  \ll \tau_3(\ell),
\end{aligned}	
\label{eqn:local_density_error1}
\end{equation}
which implies that
\begin{equation}
\sum_{\ell = dk} M_{f_{(a,d)}}(k) k^{1/2} \tau(k) d^{-1/2} \ll \ell^{1/2} \tau(\ell)  \sum_{\ell = dk} \frac{M_{f_{(a,d)}}(k)}{d} \ll  \ell^{1/2} \tau(\ell) \tau_3(\ell). 
\label{eqn:local_density_error2}
\end{equation}
Combining the observation that
\[
\sum_{\ell = dk} M_{f_{(a,d)}}(k) \delta(k) = M_{f_{(a,d)}}(1) = 1
\]
with \eqref{eqn:congruence_sum_unsimplified}, \eqref{eqn:local_density_main_term}, \eqref{eqn:local_density_error1}, and \eqref{eqn:local_density_error2} yields the desired result. 
\end{proof} 
%

To obtain a better intuition for the quality of \cref{prop:local_density}, we present the special case when $\ell=1$ as a corollary below. We do not claim that this corollary is new, but we have not seen it stated in the literature and thought it may be of independent interest. 

%
%
%
\begin{corollary}
	\label{corollary:congruence_sum_trivial}	
	Let $f(u,v) = au^2 + buv + cv^2$ be a primitive positive definite binary quadratic form with discriminant $-D$. For $x \geq 1$, 
	\[
	\sum_{n \leq x} r_f(n) = \frac{2\pi x}{\sqrt{D}} + O\Big( \frac{ (ax)^{1/2}}{D^{1/2}}  +\frac{(Dx)^{1/4}}{a^{3/4}} + 1\Big).
	\]
\end{corollary}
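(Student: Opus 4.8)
The plan is simply to specialize \cref{prop:local_density} to the case $\ell = 1$ and simplify. By the first identity in \eqref{eqn:sieve_sequence_observations} we have $\sum_{n \leq x} r_f(n) = |\cA| = |\cA_1|$, so the quantity we wish to estimate is exactly the $\ell = 1$ instance of the congruence sum $|\cA_\ell|$ appearing in \eqref{eqn:congruence_sum_final}. Since $g$ is multiplicative, $g(1) = 1$ (empty product), and likewise $\tau(1) = \tau_3(1) = 1$, so every factor of $\ell^{1/2}$, $\tau(\ell)$, $\tau_3(\ell)$ in the proposition collapses to $1$. The only remaining work is to substitute $V^2 = 4ax/D$ into the main and error terms and verify the arithmetic matches the stated corollary.

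For the main term, I would compute
\[
g(1) \cdot \frac{\pi\sqrt{D}}{2a} V^2 = \frac{\pi\sqrt{D}}{2a} \cdot \frac{4ax}{D} = \frac{2\pi x}{\sqrt{D}},
\]
which is the claimed leading term. For the error terms, setting $\ell = 1$ in \eqref{eqn:congruence_sum_final} leaves $O\big( V + \frac{\sqrt{D}}{a} V^{1/2} + 1 \big)$. The first of these is $V = \sqrt{4ax/D} \ll (ax)^{1/2} / D^{1/2}$, matching the first displayed error. For the second, using $V^{1/2} = (4ax/D)^{1/4} \ll (ax)^{1/4}/D^{1/4}$ gives
\[
\frac{\sqrt{D}}{a} V^{1/2} \ll \frac{D^{1/2}}{a} \cdot \frac{(ax)^{1/4}}{D^{1/4}} = \frac{D^{1/4} x^{1/4}}{a^{3/4}} = \frac{(Dx)^{1/4}}{a^{3/4}},
\]
which is the second displayed error, and the constant term $1$ carries over directly. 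Assembling these yields the corollary.

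There is no genuine obstacle here: the statement is a pure corollary obtained by evaluating a previously established formula at a single value of the parameter, and the only content is the bookkeeping of exponents of $a$ and $D$ in the three error contributions. I would simply present the substitution and the three one-line simplifications above, noting that the factors $\tau_3(\ell)$ and $\ell^{1/2}\tau(\ell)\tau_3(\ell)$ are harmless at $\ell = 1$.
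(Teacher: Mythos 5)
Your proposal is correct and is exactly the paper's route: the corollary is stated there as the immediate $\ell = 1$ specialization of \cref{prop:local_density}, and your substitutions $g(1)=\tau(1)=\tau_3(1)=1$, $V^2 = 4ax/D$ reproduce the main term $2\pi x/\sqrt{D}$ and both error terms $(ax)^{1/2}/D^{1/2}$ and $(Dx)^{1/4}/a^{3/4}$ precisely as intended. Nothing further is needed.
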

%
\begin{remarks} Suppose $f$ is reduced so $|b| \leq a \leq c$. It is well known (see e.g. \cite[Lemma 3.1]{BlomerGranville-2006}) that
\[
\sum_{n \leq x} r_f(n) = \frac{2\pi x}{\sqrt{D}} + O\Big( \frac{x^{1/2}}{a^{1/2}} + 1\Big). 
\]
This estimate, like \cref{corollary:congruence_sum_trivial}, gives the asymptotic $\sim \frac{2\pi x}{\sqrt{D}}$ as long as $x/c \rightarrow \infty$, but the error term in \cref{corollary:congruence_sum_trivial} is stronger than the above whenever $x \geq c$. The source of this improvement is a standard analysis of the sawtooth function in \cref{lemma:sqrt_average} and its subsequent application in \cref{lemma:primitve_congruence_sum_fibre}. As discussed in \cref{subsec:optimality}, the condition $x \geq c$ is the `non-trivial' range for counting the lattice points inside the ellipse $f(u,v) \leq x$ whenever $f$ is reduced. 
\end{remarks}
%

%
%
\section{Application of Selberg's sieve}

We now apply Selberg's sieve to give an upper bound for the number of primes in a short interval represented by a reduced positive definite primitive integral binary quadratic form. We leave the calculation of the main term's implied constant unfinished as the final arguments vary  slightly for \cref{theorem:BT_uniform,theorem:BT_average}.

%
%
%
%
\begin{proposition}
	\label{prop:selberg_sieve_prelim}
	Let $f(u,v) = au^2 + buv + cv^2$ be a reduced positive definite integral binary quadratic form with discriminant $-D$. Let $(ax)^{1/2} \leq y \leq x$. Set
	\begin{equation}
	z = \Big(\frac{a}{Dx}\Big)^{1/4} y^{1/2} (\log y)^{-7}+1. 
	\label{def:sifting_variable}
	\end{equation}
	If $x \geq D/a$ then
	\begin{equation}
 	\pi_f(x) - \pi_f(x-y) < \Big\{ \frac{\log y}{\mathcal{J}} + O\big( (\log y)^{-1} \big) \Big\} \frac{\delta_f y}{h(-D) \log y},  
 	\label{eqn:selberg_sieve_prelim}
	\end{equation}
	where
	\[
	\mathcal{J} =  
		\begin{cases}
			\displaystyle \frac{1}{L(1,\chi)}\sum_{\ell < z} g(\ell) & \text{if $L(1,\chi) \geq (\log y)^{-2}$}, \\
			(\log y)^2 & \text{otherwise.}
		\end{cases}
	\]
	Here $\chi = \chi_{-D}$ is the corresponding Kronecker symbol and $g$ is the completely multiplicative function defined by \eqref{def:local_density}. 
\end{proposition}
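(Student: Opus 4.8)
The plan is to run the standard Selberg upper bound sieve on the lattice points $(u,v)$ with $x-y<f(u,v)\le x$, sifting out those for which $f(u,v)$ has a prime factor below $z$, and then to convert the resulting bound on lattice points into a bound on primes. Set $\cA=\{(u,v)\in\Z^2:x-y<f(u,v)\le x\}$ and $P(z)=\prod_{q<z}q$ (product over primes). Applying \cref{prop:local_density} at $x$ and at $x-y$ and subtracting, for squarefree $\ell$,
\[
|\{(u,v)\in\cA:\ell\mid f(u,v)\}|=g(\ell)X+R_\ell,\qquad X:=\frac{2\pi y}{\sqrt D},
\]
where $g$ is the density \eqref{def:local_density}, $R_\ell\ll \tau_3(\ell)V+\ell^{1/2}\tau(\ell)\tau_3(\ell)\tfrac{\sqrt D}{a}V^{1/2}+1$, $V=\sqrt{4ax/D}$, and I used $\tfrac{\pi\sqrt D}{2a}\big(V(x)^2-V(x-y)^2\big)=\tfrac{2\pi y}{\sqrt D}$ as in \cref{corollary:congruence_sum_trivial}. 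Each prime $p\in(x-y,x]$ with $p>z$ represented by $f$ contributes $r_f(p)$ lattice points to $S(\cA,z):=|\{(u,v)\in\cA:(f(u,v),P(z))=1\}|$, and for such primes $r_f(p)$ is a fixed positive quantity determined by $\delta_f$ and the number of units $w_{-D}$. Hence $\pi_f(x)-\pi_f(x-y)\le \tfrac{1}{r_f}S(\cA,z)+O(z)$, the $O(z)$ absorbing the $O(\pi(z))$ primes $p\le z$.

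Since $0\le g(p)<1$ for every $p$ (one checks $g(2)\le\tfrac34$ and $g(p)\le\tfrac2p$ otherwise), Selberg's sieve with sifting variable $z$ and level $z^2$ gives
\[
S(\cA,z)\le \frac{X}{G(z)}+\sum_{\ell<z^2}\mu^2(\ell)\,\tau_3(\ell)\,|R_\ell|,\qquad G(z)=\sum_{d<z}\mu^2(d)\prod_{p\mid d}\frac{g(p)}{1-g(p)}.
\]
Writing $\tfrac{g(p)}{1-g(p)}=\sum_{j\ge1}g(p)^j$ and extending $g$ completely multiplicatively, every integer $n<z$ is counted in $G(z)$ through its radical (which is $\le n<z$), so $G(z)\ge \sum_{\ell<z}g(\ell)$ and the main term is at most $X/\sum_{\ell<z}g(\ell)$.

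The crux is the remainder $R:=\sum_{\ell<z^2}\mu^2(\ell)\tau_3(\ell)|R_\ell|$, and this is precisely where the shape of $z$ in \eqref{def:sifting_variable} matters. Inserting the bound for $R_\ell$ and using $\sum_{\ell<N}\tau_3(\ell)^{O(1)}\ll N(\log N)^{O(1)}$, the three contributions are $\ll z^2V$, $\ll z^3\tfrac{\sqrt D}{a}V^{1/2}$, and $\ll z^2$, up to powers of $\log z$. With $z^2\asymp (a/Dx)^{1/2}y(\log y)^{-14}$ the first and third become $\ll \tfrac{a}{D}y(\log y)^{-6}\ll \tfrac{y}{\sqrt D}(\log y)^{-6}$ (using $a\le\sqrt D$ for reduced $f$), while the dominant middle term becomes $\ll \tfrac{y^{3/2}}{(Dx)^{1/2}}(\log y)^{-21+O(1)}\ll \tfrac{y}{\sqrt D}(\log y)^{-A}$ for large $A$, since $y\le x$. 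The hypotheses $x\ge D/a$ and $(ax)^{1/2}\le y$ guarantee $V\ge1$, $z\ge1$, and these size comparisons; the factor $(\log y)^{-7}$ in $z$ is calibrated so that the $z^3$-term, once cubed, beats the main term by several powers of $\log y$. I expect this remainder estimate to be the main obstacle, since it is the only place where the error terms of \cref{prop:local_density} interact with the sieve level and must be shown subordinate to $\tfrac{y}{\sqrt D}$ by several powers of $\log y$.

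Finally I assemble using the class number formula $L(1,\chi_{-D})=\tfrac{2\pi h(-D)}{w_{-D}\sqrt D}$, which turns $X=\tfrac{2\pi y}{\sqrt D}$ into a constant multiple of $\tfrac{L(1,\chi)y}{h(-D)}$; combined with $r_f$ this produces the constant $\tfrac{\delta_f y}{h(-D)\log y}$. There are two regimes. If $L(1,\chi)\ge(\log y)^{-2}$, I bound the main term by $X/\sum_{\ell<z}g(\ell)$ and set $\cJ=\tfrac{1}{L(1,\chi)}\sum_{\ell<z}g(\ell)$, so the main term contributes $\tfrac{\log y}{\cJ}\cdot\tfrac{\delta_f y}{h(-D)\log y}$ and $R$ contributes the $O((\log y)^{-1})$ term. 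If instead $L(1,\chi)<(\log y)^{-2}$, I discard the sieve and use only $S(\cA,z)\le|\cA|=X\asymp \tfrac{L(1,\chi)y}{h(-D)}<\tfrac{y}{h(-D)(\log y)^2}$, which is exactly the bound recorded by $\cJ=(\log y)^2$. Either way this gives \eqref{eqn:selberg_sieve_prelim}, leaving the evaluation of $\cJ$ (equivalently of $\sum_{\ell<z}g(\ell)$) to the later arguments via the character-sum estimates of \cref{lemma:weighted_Dirichlet_sum}.
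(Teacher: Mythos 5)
Your proposal is correct and takes essentially the same route as the paper: the same case split on $L(1,\chi)$ (trivial first-moment bound plus the class number formula when $L(1,\chi) < (\log y)^{-2}$, Selberg's sieve with level $z^2$ otherwise), the same congruence-sum input from \cref{prop:local_density}, the same lower bound $G(z) \geq \sum_{\ell < z} g(\ell)$ via the completely multiplicative extension, and the same remainder analysis calibrated by the $(\log y)^{-7}$ factor in $z$. The only cosmetic difference is that you sift the lattice points in the annulus directly and divide by $r_f(p)$ at the end, whereas the paper sifts the weighted sequence $r_f(n)$ and multiplies $\pi_f(x)-\pi_f(x-y)$ by $w_{-D}/\delta_f$ up front; these are identical computations.
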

%

%
\begin{proof} As $f$ is reduced, we have that $|b| \leq a \leq c$ and moreover $c \asymp D/a \geq \sqrt{D} \geq a$. We will frequently apply these properties while only mentioning that $f$ is reduced. 

Our argument is divided according to the size of $L(1,\chi)$. First, assume $L(1,\chi) < (\log y)^{-2}$. Let $w_{-D}$ be the number of roots of unity contained in $\mathbb{Q}(\sqrt{-D})$. Trivially, by \cref{corollary:congruence_sum_trivial}, 
\begin{equation*}
\begin{aligned}
\frac{w_{-D}}{\delta_f} ( \pi_f(x) - \pi_f(x-y) )
	 \leq \sum_{x-y < n \leq x} r_f(n) = \frac{2\pi y}{\sqrt{D}} + O\Big( \frac{(ax)^{1/2}}{D^{1/2}} \Big)
\end{aligned}
\end{equation*}
because $x \geq D/a$ and $f$ is reduced. Thus, by the class number formula
\begin{equation}
h(-D) = \frac{ w_{-D} \sqrt{D}}{2\pi} L(1,\chi) 
\label{eqn:class_number_formula}
\end{equation}
and our assumption on $L(1,\chi)$, 
\begin{equation*}
\begin{aligned}
\pi_f(x) - \pi_f(x-y) 
	& \leq L(1,\chi) \Big\{ 1 + O\Big( \frac{(ax)^{1/2}}{y}\Big) \Big\}  \frac{\delta_f y}{h(-D)} 
	 \ll \frac{y}{h(-D) (\log y)^2}. 
\end{aligned}
\end{equation*}
In the last step, we used that $y \geq (ax)^{1/2}$. This establishes \eqref{eqn:selberg_sieve_prelim} when $L(1,\chi) < (\log y)^{-2}$. Therefore, we may henceforth assume
\begin{equation}
L(1,\chi) \geq (\log y)^{-2}.
\label{eqn:assumption_L1chi}
\end{equation}
Defining $P = P(z) = \prod_{p \leq z} p$, it follows that
\begin{equation}
	\frac{w_{-D}}{\delta_f} (\pi_f(x) - \pi_f(x-y)) \leq  \sum_{\substack{x-y < n \leq x  \\ (n,P) = 1}} r_f(n) + \frac{w_{-D}}{\delta_f} \pi(z),
	\label{eqn:selberg_sieve_step0}
\end{equation}
where $\pi(z)$ is the number of primes up to $z$. We proceed to estimate the sieved sum. 
Using \cref{prop:local_density}	and Selberg's upper bound sieve \cite[Theorem 7.1]{FriedlanderIwaniec-2010} with level of distribution $z^2$, we see that
\begin{equation}
\sum_{\substack{x-y < n \leq x  \\ (n,P) = 1}} r_f(n) < \frac{2\pi y}{\sqrt{D} J}  + \sum_{\substack{\ell \mid P \\ \ell < z^2}} r_\ell \lambda_\ell ,
\label{eqn:selberg_sieve_step1}
\end{equation}
where 
\[
J = \sum_{\substack{\ell \mid P \\ \ell < z}} h(\ell), \qquad h(\ell	) = \prod_{p \mid \ell} \frac{g(p)}{1-g(p)}, \qquad |\lambda_{\ell}| \leq \tau_3(\ell), 
\]
and
\[
r_{\ell} \ll \tau_3(\ell) V + \ell^{1/2} \tau(\ell) \tau_3(\ell) \frac{\sqrt{D}}{a} V^{1/2}. 
\]
Here, as usual, $V = \sqrt{4ax/D}$. Note $1 \leq V \leq x^{1/2}$ as $x \geq D/a$ and $a \leq \sqrt{D}$. For the quantity $J$ in the main term, we treat $g$ as a completely multiplicative function and note that 
\begin{equation}
J \geq \sum_{\ell < z} g(\ell) = L(1,\chi) \mathcal{J} 
\label{eqn:selberg_sieve_J}
\end{equation}
by \eqref{eqn:assumption_L1chi}. The remainder term in \eqref{eqn:selberg_sieve_step1} is bounded in a straightforward manner. Using standard estimates for the $k$-divisor function $\tau_k(\ell)$ (see, e.g., \cite{LucaToth-2017}) and the prime number theorem, one can verify that
\begin{equation*}
\begin{aligned}
\frac{w_{-D}}{\delta_f} \pi(z) + \sum_{\substack{\ell \mid P \\ \ell < z^2}}r_{\ell} \lambda_{\ell}
		& \ll \frac{z}{\log z} + V \sum_{\ell < z^2} \tau_3(\ell)^2 +  \frac{\sqrt{D}}{a} V^{1/2} \sum_{\ell < z^2} \ell^{1/2} \tau(\ell) \tau_3(\ell)^2  \\
		& \ll   z^2 (\log z)^{8} \cdot V   + z^{3} (\log z)^{17} \cdot  \frac{\sqrt{D}}{a} V^{1/2} \\
		& \ll \frac{ay}{D} (\log y)^{-6} + \frac{y^{3/2} x^{-1/2}}{\sqrt{D}} (\log y)^{-4}.  
\end{aligned}	
\label{eqn:selberg_sieve_remainder}
\end{equation*}
In the last step, we used that $V = \sqrt{4ax/D}$ and, by \eqref{def:sifting_variable}, $z = (\frac{a}{Dx})^{1/4} y^{1/2} (\log y)^{-7}+1 \leq y$. Since $y \leq x$ and $a \leq \sqrt{D}$, we see that the above is 
\[
\ll \frac{y}{\sqrt{D} (\log y)^4}. 
\]
Thus, applying the class number formula \eqref{eqn:class_number_formula} and the well-known estimate $L(1,\chi) \ll \log D \ll \log y$, we conclude that
\begin{equation}
\begin{aligned}
\frac{w_{-D}}{\delta_f} \pi(z) + \sum_{\substack{\ell \mid P \\ \ell < z^2}}r_{\ell} \lambda_{\ell}
	 \ll  \frac{y}{h(-D) (\log y)^3}.	
\end{aligned}
\label{eqn:selberg_sieve_remainder_final}
\end{equation}
Combining \eqref{eqn:selberg_sieve_step0}, \eqref{eqn:selberg_sieve_step1}, \eqref{eqn:selberg_sieve_J}, and \eqref{eqn:selberg_sieve_remainder_final} completes the proof of the proposition with a final application of the class number formula.  
\end{proof}
%

Evidently, from \cref{prop:selberg_sieve_prelim}, we will require a lower bound for the sum of local densities. We execute the first steps here.

%
%
%
\begin{lemma}
	\label{lemma:sum_of_local_densities}
	Let $g$ be the completely multiplicative function defined by \eqref{def:local_density}. For $z \geq 1$, 
	\[
	\sum_{\ell  < z} g(\ell)  \geq  L(1,\chi)  \log z + L'(1,\chi) + O\Big(L(1,\chi) + \cE_1(z; \chi) + z^{-1}   \cE_0(z; \chi) \Big). 
	\]
	Here $\cE_1(z;\chi)$ and $\cE_0(z;\chi)$ are defined as in \cref{lemma:weighted_Dirichlet_sum}. 
\end{lemma}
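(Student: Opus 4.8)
The plan is to compare $g$ with the more familiar multiplicative function $h(n) := (1\ast\chi)(n)/n$, whose partial sums are exactly the objects controlled by the weighted Dirichlet sums of \cref{lemma:weighted_Dirichlet_sum}. Since $g$ is completely multiplicative with $g(p) = \frac{1+\chi(p)}{p} - \frac{\chi(p)}{p^2} = h(p) - \frac{\chi(p)}{p^2}$, I would factor $g = h \ast j$ as a Dirichlet convolution and record the local behaviour of the correction $j$. A short computation with the local Euler factors shows $j(1)=1$, $j(p) = -\chi(p)/p^2$, and $j(p^k) \ll p^{-2}$ for every $k\ge 1$, so that $\sum_b |j(b)| < \infty$; crucially, evaluating the local factor at $p^{-s}=1$ gives $\sum_{k\ge 0} j(p^k) = 1$ for each $p$, whence $\sum_b j(b) = 1$ and $\sum_{b>1} j(b) = 0$. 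This last cancellation is what lets $g$ inherit \emph{both} main terms of $h$ rather than merely the leading $\log z$.

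The heart of the matter is then the unweighted sum $U(t) := \sum_{\ell < t} h(\ell) = \sum_{\ell<t} (1\ast\chi)(\ell)/\ell$. To bring \cref{lemma:weighted_Dirichlet_sum} to bear I would reintroduce a Cesàro weight through the exact identity
\[
U(t) = \sum_{\ell<t} h(\ell)\Big(1 - \frac{\ell}{t}\Big) + \frac{1}{t} \sum_{\ell<t}(1\ast\chi)(\ell),
\]
using $\ell\, h(\ell) = (1\ast\chi)(\ell)$. The first piece is precisely \eqref{eqn:log_weighted_Dirichlet_sum}, contributing $L(1,\chi)(\log t + \gamma - 1) + L'(1,\chi) + O(\cE_1(t;\chi))$. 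For the second piece I would use the nonnegativity of $(1\ast\chi)$ together with \eqref{eqn:weighted_Dirichlet_sum}, applied at $t$ and at $2t$ to sandwich the unweighted sum between two Cesàro sums, giving $\frac{1}{t}\sum_{\ell<t}(1\ast\chi)(\ell) = O\big(L(1,\chi)\big) + O\big(t^{-1}\cE_0(t;\chi)\big)$. Combining, $U(t) = L(1,\chi)\log t + L'(1,\chi) + O\big(L(1,\chi) + \cE_1(t;\chi) + t^{-1}\cE_0(t;\chi)\big)$, which is the desired assertion with $h$ in place of $g$.

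To transfer from $h$ to $g$ I would write $\sum_{\ell<z} g(\ell) = \sum_{b<z} j(b)\, U(z/b)$ and substitute the estimate for $U$. Since $\sum_{b<z} j(b) = 1 + O(z^{-\kappa})$ for some fixed $\kappa>0$ and $\sum_{b<z} j(b)\log b$ converges, the main terms assemble into $L(1,\chi)\log z + L'(1,\chi)$, with the residual $\log z$-contribution killed by $\sum_{b>1} j(b)=0$ and the remaining constants folded into $O(L(1,\chi))$. I expect the genuine obstacle to be the uniform control of the error terms after convolving, namely bounding $\sum_{b<z} |j(b)|\,\cE_1(z/b;\chi)$ by $\cE_1(z;\chi)$ and $\sum_{b<z}|j(b)|\,\tfrac{b}{z}\,\cE_0(z/b;\chi)$ by $z^{-1}\cE_0(z;\chi)$. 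This forces one to play the rapid decay $j(p^k)\ll p^{-2}$ against the sub-linear growth of $\cE_0,\cE_1$ in their argument, so that the stray factor $b$ in the $\cE_0$ term is outrun by the decay of $j$; a crude size or monotonicity bound for $\cE_0(\,\cdot\,;\chi)$ and $\cE_1(\,\cdot\,;\chi)$ will be needed to make this rigorous, along with a little extra care for the terms with $b$ close to $z$, where $z/b$ is bounded and the inner estimate degenerates.
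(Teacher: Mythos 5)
Your factorization $g = h \ast j$ is exactly the paper's decomposition $G(s) = \zeta(s+1)L(s+1,\chi)\tilde G(s)$ (your $j$ is its $\tilde g$), your local computations for $j$ are correct, and your treatment of $U(t)=\sum_{m<t}h(m)$ --- the exact Ces\`aro identity plus \cref{lemma:weighted_Dirichlet_sum}, with the doubling trick for the unweighted sum (valid since $\cE_0(2t;\chi)\le 2\cE_0(t;\chi)$, by reusing the minimizing $y$) --- is sound and mirrors how the paper uses that lemma. The gap is in your last step, and it is not closable by ``a crude size or monotonicity bound.'' Two concrete failures. First, the stray factor $b$ in $\sum_{b<z}|j(b)|\tfrac{b}{z}\cE_0(z/b;\chi)$ is \emph{not} outrun by the decay of $j$: for $k \ge 2$ one computes $j(p^k)=\chi(p)p^{-2}g(p)^{k-2}(1-g(p))$, so $|j(p^k)|\,p^k = (p\,g(p))^{k-2}(1-g(p))$ with $p\,g(p) = 1+\chi(p)(1-1/p) > 1$ whenever $\chi(p)=1$; hence $\sum_{b<z}|j(b)|\,b$ grows like a power of $z$ (the powers of $2$ alone give $\gg z^{\log(3/2)/\log 2}$ when $\chi(2)=1$), so any bound must exploit decay of $\cE_0(z/b;\chi)$ in $b$, which the definition of $\cE_0$ (a minimum over an auxiliary parameter, with no monotonicity) does not supply. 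Second, the same formula gives $|j(p^2)| \asymp p^{-2}$, i.e.\ $|j(b)| \asymp b^{-1}$ on squares, so the tail $\sum_{b>B}|j(b)|$ decays only like $B^{-\kappa}$ with $\kappa \le 1/2$; the terms with $b \asymp z$ therefore contribute on the order of $z^{-1/2}$ to $\sum_b |j(b)|\,\cE_1(z/b;\chi)$ (there $\cE_1(z/b;\chi) = \cE_1(O(1);\chi) \gg 1$), and there is no usable general lower bound $L(1,\chi)+\cE_1(z;\chi)+z^{-1}\cE_0(z;\chi) \gg z^{-1/2}$: unconditionally $L(1,\chi)$ can be as small as $D^{-1/2}$ while $z$ is a small power of $D$ --- precisely the paper's operating regime ($z \approx D^{1/4+\epsilon}$) --- and the only trivial lower bounds on $\cE_0,\cE_1$ are $\gg 1/z$. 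So the absorption you need is exactly the hard content of the lemma, not a technicality.

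The paper avoids this multi-scale problem by collapsing the $j$-sum \emph{first}: it writes $\sum_{\ell<z}g(\ell) = \sum_{m<z}h(m)\sum_{b<z/m}j(b)$, inserts $\sum_{b<N}j(b) = 1 + O(\mathrm{tail}(N))$ pointwise in $m$, so that the only error is $\sum_{m<z}h(m)\,\mathrm{tail}(z/m) \ll z^{-1}\sum_{m<z}(1\ast\chi)(m)$, and then invokes \cref{lemma:weighted_Dirichlet_sum} exactly once, at argument $z$; the quantities $\cE_0,\cE_1$ never appear at scaled arguments. If you reorganize your write-up this way, your steps on $j$ and on $U$ can be kept verbatim. (One caveat that your own computation exposes: the paper asserts $\tilde g(n) \ll n^{-2}$, hence $\mathrm{tail}(N) \ll N^{-1}$, but $|j(p^2)| \asymp p^{-2}$ shows the absolute tail is only $O(N^{-\kappa})$ with $\kappa \le 1/2$, so even the paper's arrangement needs slightly more careful bookkeeping there; the difference is that its arrangement localizes this defect in a single harmless factor, whereas yours spreads it over all scales $z/b$, where it collides with the very error terms you must beat.)
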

\begin{proof}
Define
\begin{equation*}
\begin{aligned}
G(s) & := \sum_{n=1}^{\infty} g(n) n^{-s} = \prod_{p} \Big(1- g(p) p^{-s}\Big)^{-1}, \\
\end{aligned}
\end{equation*}
which absolutely converges for $\Re\{s\} > 0$ since $|g(p)| \leq 2/p$. 
One can verify that for $\Re\{s\} > 0$ 
\begin{equation}
G(s) = \zeta(s+1) L(s+1,\chi) \tilde{G}(s),
\label{eqn:local_density_decompose}
\end{equation}
where $\zeta(s)$ is the Riemann zeta function, $L(s,\chi)$ is the Dirichlet $L$-function attached to the quadratic character $\chi = \chi_{-D}$, and 
\[
\tilde{G}(s) := \prod_{p} \Big(1 - \frac{ \chi(p)  p^{-s-2} - \chi(p)  p^{-2s-2}}{1- (1+\chi(p)) p^{-s-1} + \chi(p) p^{-s-2}} \Big).  
\]
It is straightforward to check that $\tilde{G}(s)$ is absolutely convergent for $\Re\{s\} > -1$ and, in particular, $\tilde{G}(0) = 1$. Expanding the Euler product for $\tilde{G}(s)$ and writing $\tilde{G}(s) = \sum_n \tilde{g}(n) n^{-s}$ for some multiplicative function $\tilde{g}$, one can see that
\[
\tilde{g}(n) \ll  n^{-2}. 
\]
As $\tilde{G}(0) = 1$, it follows that
\[
\sum_{n \leq N} \tilde{g}(n) = 1 + O( N^{-1}).
\]
Therefore, from \eqref{eqn:local_density_decompose}, 
\begin{equation*}
	\begin{aligned}
		\sum_{\ell < z} g(\ell) 
			& = \sum_{\ell < z} \sum_{\ell = mn} \frac{(1 \ast \chi)(m)}{m} \tilde{g}(n)   = \sum_{m < z}  \frac{(1 \ast \chi)(m)}{m} + O\Big( z^{-1} \sum_{m < z} (1 \ast \chi)(m) \Big). \\
	\end{aligned}
\end{equation*}
The desired result now follows from \cref{lemma:weighted_Dirichlet_sum}. 
\end{proof}

%
%
\section{Representation of primes}

We may finally prove \cref{theorem:BT_average,theorem:BT_uniform}. In both cases, we will need to apply \cref{prop:selberg_sieve_prelim} from which  one can see that it suffices to provide an appropriate lower bound for $\mathcal{J}$ when $L(1,\chi) \geq (\log y)^{-2}$. By \cref{lemma:sum_of_local_densities}, it follows that
\begin{equation}
	\cJ \geq \log z + \frac{L'}{L}(1,\chi) + O\Big( 1 + \mathcal{E}_1(z; \chi) (\log y)^2 + \cE_0(z; \chi) \frac{(\log y)^2}{z} \Big)
	\label{eqn:J_lowerbound}
\end{equation}
provided $L(1,\chi) \geq (\log y)^{-2}$, $(ax)^{1/2} \leq y \leq x$,  and $z$ is given by \eqref{def:sifting_variable}. Recall that $\cE_0$ and $\cE_1$ are given by \cref{lemma:weighted_Dirichlet_sum} with estimates exhibited in \cref{sec:average_bounds,sec:uniform_bounds} . The proofs for both theorems will employ \eqref{eqn:J_lowerbound}. 

Before we proceed, we wish to emphasize that $f(u,v) = au^2 + buv + cv^2$ is assumed to be a \emph{reduced} positive definite binary integral quadratic form of discriminant $-D$. Thus, $|b| \leq a \leq c$ and $a \leq \sqrt{D}$. 

%
%
%
\subsection{Proof of \cref{theorem:BT_uniform}}
\label{subsec:proof_BT_uniform}
Recall we are assuming that 
\[
\big(\frac{D^{1+4\phi}}{a}\big)^{1/2+\epsilon} x^{1/2+\epsilon} \leq y \leq x,
\]
where $\phi$ is given by \eqref{def:phi}. As $a \leq \sqrt{D}$, this implies that $y \geq (ax)^{1/2}$. Furthermore, 
\[
z =  \Big(\frac{a}{Dx}\Big)^{1/4} y^{1/2} (\log y)^{-7}+1 \gg_{\epsilon} D^{\phi+\epsilon/2}, \qquad 
		\text{and} 
	\qquad
		\log z \asymp \log y \asymp \log x. 
\]
Therefore, applying \cref{lemma:log_derivative_unconditional} (or \cref{lemma:log_derivative_GRH} when assuming GRH) and \cref{lemma:uniform_bounds} to \eqref{eqn:J_lowerbound}, it follows that
\begin{equation*}
\begin{aligned}
\mathcal{J} 
	& \geq \log z -\Big(\frac{\phi}{2} + \frac{\epsilon}{4}\Big) \log D + O_{\epsilon}(1 + z^{-\epsilon^2} \log^2 y) \\
	& \geq \frac{1}{2} \log y - \frac{1}{4} \log x - (\frac{1}{4} + \frac{\phi}{2} + \frac{\epsilon}{4}) \log D + \frac{1}{4} \log a + O_{\epsilon}(\log\log y) \\
	& = \frac{1 - \theta}{2}\log y + O_{\epsilon}(\log\log y),
\end{aligned}	
\end{equation*}
where $\theta$ is defined as in \cref{theorem:BT_uniform}. Substituting this estimate in \cref{prop:selberg_sieve_prelim} establishes \cref{theorem:BT_uniform} when $L(1,\chi) \geq (\log y)^{-2}$. If $L(1,\chi) < (\log y)^{-2}$ then the desired result follows immediately from \cref{prop:selberg_sieve_prelim} and hence completes the proof. \hfill \qed

%
%
%
\subsection{Proof of \cref{theorem:BT_average}}
\label{subsec:proof_BT_average}

Recall $\mathfrak{D}(Q)$ is given by \eqref{def:discriminants}. Let $c_1(\epsilon) > 0$ be a sufficiently small constant and $C_1(\epsilon), C_2(\epsilon) \geq 1$ be sufficiently large constants, all of which depend only on $\epsilon$. For $Q \geq 3 $, let $\mathfrak{D}_{\epsilon}(Q)$ be the subset of discriminants $-D \in \mathfrak{D}(Q)$ such that
\begin{equation}
	 -\frac{L'}{L}(1,\chi_{-D}) \leq \epsilon \log D + C_2(\epsilon)
	 \label{eqn:log_derivative_average}
\end{equation}
and, for $c_1(\epsilon) D^{\epsilon} \leq u \leq C_1(\epsilon) D^{2+\epsilon}$,
\begin{equation}
	\cE_0(u; \chi_{-D}) \leq u^{7/8+\epsilon}, \qquad 
	\cE_1(u; \chi_{-D}) \leq u^{-1/8+\epsilon}. 
	\label{eqn:errors_average}
\end{equation}
By \cref{lemma:charsum_errors_average,lemma:log_derivative_average}, the number of discriminants \emph{not} satisfying these two properties is  
\[
|\mathfrak{D}(Q) \setminus \mathfrak{D}_{\epsilon}(Q)| \ll_{\epsilon} Q^{1-\epsilon/10}.
\]
Thus, it suffices to show for every discriminant $-D \in \mathfrak{D}_{\epsilon}(Q)$ and reduced positive definite binary quadratic form $f$ of discriminant $-D$ that
\begin{equation}
\pi_f(x) - \pi_f(x-y)  < \frac{2}{1-\theta'} \frac{\delta_f y}{\log y} \Big\{ 1 + O_{\epsilon}\Big(\frac{\log\log y}{\log y}\Big) \Big\}
\label{eqn:GRH_average_bound}
\end{equation}
provided  $\big(\frac{Dx}{a})^{1/2+\epsilon} \leq y \leq x$. Here $\theta'$ is defined as in \cref{theorem:BT_uniform} with $\phi = 0$. First, assume 
\begin{equation}
\big(\frac{D^2 x}{a}\big)^{1/2+\epsilon}  \leq y \leq x. 
\label{eqn:GRH_average_bound_range1}
\end{equation}
Arguing as in \cref{subsec:proof_BT_uniform}, it follows that $y \geq (ax)^{1/2}$,
\[
z =  \Big(\frac{a}{Dx}\Big)^{1/4} y^{1/2} (\log y)^{-7}+1 \gg_{\epsilon} D^{1/4+\epsilon/2}, \qquad 
		\text{and} 
	\qquad
		\log z \asymp \log y \asymp \log x. 
\]
Thus, incorporating \eqref{eqn:log_derivative_average} and \eqref{eqn:uniform_bounds_unconditional} from  \cref{lemma:uniform_bounds} into \eqref{eqn:J_lowerbound}, it similarly follows that
\begin{equation}
\mathcal{J} \geq \frac{1-\theta'}{2} \log y + O_{\epsilon}(\log\log y)
\label{eqn:J_lowerbound_average}
\end{equation}
whenever $L(1,\chi) \geq (\log y)^{-2}$. Therefore, by \cref{prop:selberg_sieve_prelim}, this establishes \eqref{eqn:GRH_average_bound} provided \eqref{eqn:GRH_average_bound_range1} holds and $-D \in \mathfrak{D}_{\epsilon}(Q)$. It remains to consider the case when
\begin{equation}
\big(\frac{D x}{a}\big)^{1/2+\epsilon}  \leq y \leq \big(\frac{D^2 x}{a}\big)^{1/2+\epsilon} \leq x. 
\label{eqn:GRH_average_bound_range2}
\end{equation}
Note we continue to assume $-D \in \kD_{\epsilon}(Q)$. As before, it follows that $y \geq (ax)^{1/2}$,
\[
z =  \Big(\frac{a}{Dx}\Big)^{1/4} y^{1/2} (\log y)^{-7}+1 \gg_{\epsilon} D^{\epsilon/2}, \qquad 
		\text{and} 
	\qquad
		\log z \asymp \log y \asymp \log x. 
\]
Thus, incorporating \eqref{eqn:log_derivative_average} and \eqref{eqn:errors_average} into \eqref{eqn:J_lowerbound}, we again obtain \eqref{eqn:J_lowerbound_average} whenever $L(1,\chi) \geq (\log y)^{-2}$. By \cref{prop:selberg_sieve_prelim}, this establishes \eqref{eqn:GRH_average_bound} provided \eqref{eqn:GRH_average_bound_range2} holds and $-D \in \mathfrak{D}_{\epsilon}(Q)$. This completes the proof in all cases. \hfill \qed 

%
%
\section{Representation of small integers with few prime factors}

\noindent
\emph{Proof of \cref{theorem:almost_primes}.}
We apply the beta sieve to the sequence $\mathcal{A} = \mathcal{A}(x,f)$ given by \eqref{def:sieve_sequence}. Let $g$ be the local density function defined in \cref{prop:local_density}, so $g(p) \leq 2/p$. Thus, the sequence $\mathcal{A}$ is of dimension at most $\kappa = 2$ and has sifting limit $\beta = \beta(\kappa) < 4.85$ according to \cite[Section 11.19]{FriedlanderIwaniec-2010}. Let $x \geq D/a$ and select
	\[
	z := V^{10/49}.
	\]
	where $V = \sqrt{4ax/D} \geq 2$. Select the level of distribution to be $R = z^{485/100} > z^{\beta}$.  Thus, by \cite[Theorem 11.13]{FriedlanderIwaniec-2010} and \cref{prop:local_density}, it follows that
	\begin{equation}
		\sum_{\substack{n \leq x \\ (n,P(z)) = 1} } r_f(n)
			 \gg \frac{x}{\sqrt{D} (\log x)^2}  + O\Big( \sum_{ \substack{\ell \mid P(z) \\ \ell < R} } |r_{\ell}| \Big),
	\end{equation}
	where 
	\[
	|r_{\ell}| \ll \ell^{\eta} V + \ell^{1/2+\eta} \frac{\sqrt{D}}{a} V^{1/2} + 1
	\]
	for fixed $\eta > 0$ sufficiently small. Since $f$ is reduced and $R = z^{485/100} = V^{97/98}$, we see that
	\begin{equation*}
		\sum_{ \substack{\ell \mid P(z) \\ \ell < R} } |r_{\ell}| 
			 \ll  R^{1+\eta} V + \frac{\sqrt{D}}{a} R^{3/2+\eta} V^{1/2} + R^{1-\eta} 
			 \ll  \frac{\sqrt{D}}{a} V^{2-\frac{3}{196}+\eta} \ll \frac{x^{1-\frac{3}{392} +\eta}}{\sqrt{D}}.
	\end{equation*}
	Thus, as $\eta > 0$ is sufficiently small,
	\begin{equation*}
		\sum_{\substack{n \leq x \\ (n,P(z)) = 1} } r_f(n)
			 \gg \frac{x}{\sqrt{D} (\log x)^2}
	\end{equation*}
	for $x \geq D/a$. For an integer $k \geq 10$, observe that $z \geq x^{1/k}$ if and only if
	\[
	\Big( \frac{ax}{D}\Big)^{5k/49} \geq x
	 \iff x \geq \Big(\frac{D}{a}\Big)^{1+\frac{49}{5k-49}}.
	\]
	This completes the proof. \hfill \qed

\bibliographystyle{alpha}
\bibliography{bibtex_library}

\begin{thebibliography}{MKM13}

\bibitem[BG06]{BlomerGranville-2006}
Valentin Blomer and Andrew Granville.
\newblock Estimates for representation numbers of quadratic forms.
\newblock {\em Duke Math. J.}, 135(2):261--302, 2006.

\bibitem[Bur63]{Burgess-1963}
D.~A. Burgess.
\newblock On character sums and {$L$}-series. {II}.
\newblock {\em Proc. London Math. Soc. (3)}, 13:524--536, 1963.

\bibitem[Cox13]{Cox-2013}
David~A. Cox.
\newblock {\em Primes of the form {$x^2 + ny^2$}}.
\newblock Pure and Applied Mathematics (Hoboken). John Wiley \& Sons, Inc.,
  Hoboken, NJ, second edition, 2013.
\newblock Fermat, class field theory, and complex multiplication.

\bibitem[Deb16]{Debaene-2016}
Korneel Debaene.
\newblock Explicit counting of ideals and a brun-titchmarsh inequality for the
  chebotarev density theorem.
\newblock {\em arXiv preprint arXiv:1611.10103}, 2016.

\bibitem[Dit13a]{Ditchen-2013}
Jakob Ditchen.
\newblock On the average distribution of primes represented by binary quadratic
  forms.
\newblock {\em arXiv preprint arXiv:1312.1502}, 2013.

\bibitem[Dit13b]{Ditchen-2013a}
Jakob~Johann Ditchen.
\newblock {\em Primes of the shape x$^2$+ ny$^2$. The distribution on average
  and prime number races}.
\newblock PhD thesis, ETH Z{\"u}rich, 2013.

\bibitem[FI10]{FriedlanderIwaniec-2010}
John Friedlander and Henryk Iwaniec.
\newblock {\em Opera de cribro}, volume~57 of {\em American Mathematical
  Society Colloquium Publications}.
\newblock American Mathematical Society, Providence, RI, 2010.

\bibitem[Fog62]{Fogels-1961}
E.~Fogels.
\newblock On the zeros of {H}ecke's {$L$}-functions. {I}, {II}.
\newblock {\em Acta Arith.}, 7:87--106, 131--147, 1961/1962.

\bibitem[HB92]{Heath-Brown-1992}
D.~R. Heath-Brown.
\newblock Zero-free regions for {D}irichlet {$L$}-functions, and the least
  prime in an arithmetic progression.
\newblock {\em Proc. London Math. Soc. (3)}, 64(2):265--338, 1992.

\bibitem[HB95]{Heath-Brown-1995}
D.~R. Heath-Brown.
\newblock A mean value estimate for real character sums.
\newblock {\em Acta Arith.}, 72(3):235--275, 1995.

\bibitem[IK04]{IwaniecKowalski-2004}
Henryk Iwaniec and Emmanuel Kowalski.
\newblock {\em Analytic number theory}, volume~53 of {\em American Mathematical
  Society Colloquium Publications}.
\newblock American Mathematical Society, Providence, RI, 2004.

\bibitem[IMS09]{IharaMurtyShimura-2009}
Yasutaka Ihara, V.~Kumar Murty, and Mahoro Shimura.
\newblock On the logarithmic derivatives of {D}irichlet {$L$}-functions at
  {$s=1$}.
\newblock {\em Acta Arith.}, 137(3):253--276, 2009.

\bibitem[Jut75]{Jutila-1975}
Matti Jutila.
\newblock On mean values of {D}irichlet polynomials with real characters.
\newblock {\em Acta Arith.}, 27:191--198, 1975.
\newblock Collection of articles in memory of Juri\u\i Vladimirovi\v c Linnik.

\bibitem[KM02]{KowalskiMichel-2002}
E.~Kowalski and P.~Michel.
\newblock Zeros of families of automorphic {$L$}-functions close to 1.
\newblock {\em Pacific J. Math.}, 207(2):411--431, 2002.

\bibitem[LMO79]{LagariasMontgomeryOdlyzko-1979}
J.~C. Lagarias, H.~L. Montgomery, and A.~M. Odlyzko.
\newblock A bound for the least prime ideal in the {C}hebotarev density
  theorem.
\newblock {\em Invent. Math.}, 54(3):271--296, 1979.

\bibitem[LO77]{LagariasOdlyzko-1977}
J.~C. Lagarias and A.~M. Odlyzko.
\newblock Effective versions of the {C}hebotarev density theorem.
\newblock pages 409--464, 1977.

\bibitem[LT17]{LucaToth-2017}
Florian Luca and L{\'a}szl{\'o} T{\'o}th.
\newblock The $ r $ th moment of the divisor function: an elementary approach.
\newblock {\em Journal of Integer Sequences}, 20:17.7.4, 2017.

\bibitem[MKM13]{MourtadaKumar-Murty-2013}
M.~Mourtada and V.~Kumar~Murty.
\newblock Omega theorems for {$\frac{L'}L(1,\chi_D)$}.
\newblock {\em Int. J. Number Theory}, 9(3):561--581, 2013.

\bibitem[MV73]{MontgomeryVaughan-1973}
H.~L. Montgomery and R.~C. Vaughan.
\newblock The large sieve.
\newblock {\em Mathematika}, 20:119--134, 1973.

\bibitem[MV07]{MontgomeryVaughan-2007}
Hugh~L. Montgomery and Robert~C. Vaughan.
\newblock {\em Multiplicative number theory. {I}. {C}lassical theory},
  volume~97 of {\em Cambridge Studies in Advanced Mathematics}.
\newblock Cambridge University Press, Cambridge, 2007.

\bibitem[Tsc26]{Tschebotareff-1926}
N.~Tschebotareff.
\newblock Die {B}estimmung der {D}ichtigkeit einer {M}enge von {P}rimzahlen,
  welche zu einer gegebenen {S}ubstitutionsklasse geh{\"o}ren.
\newblock {\em Math. Ann.}, 95(1):191--228, 1926.

\bibitem[TZ17a]{ThornerZaman-2017a}
Jesse Thorner and Asif Zaman.
\newblock A {C}hebotarev {V}ariant of the {B}run-{T}itchmarsh {T}heorem and
  {B}ounds for the {L}ang-{T}rotter conjectures.
\newblock {\em Int. Math. Res. Not.}, 2017.

\bibitem[TZ17b]{ThornerZaman-2017}
Jesse Thorner and Asif Zaman.
\newblock An explicit bound for the least prime ideal in the {C}hebotarev
  density theorem.
\newblock {\em Algebra Number Theory}, 11(5):1135--1197, 2017.

\bibitem[Wei83]{Weiss-1983}
Alfred Weiss.
\newblock The least prime ideal.
\newblock {\em J. Reine Angew. Math.}, 338:56--94, 1983.

\end{thebibliography}

\end{document}